\newcounter{dummy} 
\numberwithin{dummy}{section}
\newtheorem{thm}[dummy]{Theorem}
\theoremstyle{definition}
\newtheorem{defn}[dummy]{Definition}
\numberwithin{equation}{section}
\begin{document}

\title{\textbf{The Martingale Representation Theorem and Clark-Ocone formula}}
\author{Deborah Schneider-Luftman}
\date{}
\maketitle

\begin{abstract}
In this paper we explore the fundamentals of the Martingale Representation Theorem (MRT) and a closely related result, the Clark-Ocone formula. We also investigate how far these theorems can be taken, notably beyond the regular Sobolev spaces, through changes of measures and enlargement of filtrations. We look at Brownian motion (B.M.) driven continuous martingales as well as Jump and L$\acute{e}$vy process-driven martingales \footnote{This article was written as a dissertation thesis, in partial fulfilment of the requirements for the MSc in Mathematics and Finance of Imperial College of Science, Technology and Medicine (Imperial College London, August 2012).}.
\end{abstract}


\section*{Introduction}
\addcontentsline{toc}{section}{Introduction}

The representation of martingales as an integral with respect to a fundamental process has been the subject of considerable research since the 1960's. Indeed, a key aspect of Financial Maths is to represent the evolution of  the prices of financial assets and of various portfolios. In particular considerable attention is given to designing hedges for investment portfolios made of options and other derivatives. To illustrate an application, we borrow the setting provided by Black and Scholes \cite{BlackScholes}, where the value $C_0$ of a contingent claim expiring at time T written on a specific payoff C is worth:
\[ C_0 = \mathbb{E}(C_T e^{-rT} \mid \mathcal{F}_0), \]
and,
\[C_t = \mathbb{E}(C_T e^{-r(T-t)} \mid \mathcal{F}_t). \]
We seek to create a hedge for $C_t$, therefore we build $V_t$ which is composed of a traded stock and bond $(S_t, B_t)$. In order to pick the right quantities $(\phi_t, \psi_t)$ for $(S_t, B_t)$ we need to know how $C_t$ is represented at t $\in [0,T]$. The aim is then to create a self-financing strategy where:
\[ C_t = V_t := \phi_t S_t + \psi_t B_t \mbox{ , } \forall t \leq T. \]
Both $S_t$ and $B_t$ have known representations, depending on the framework used. Typically,
\[ dS_t =  S_t \mu_t dt +  S_t \sigma_t dW_t \]
where $\mu_t$ and $\sigma_t$ are the parameters of the Geometric Brownian motion that $S_t$ follows, as assumed in the Black-Scholes framework. Additionally,
\[ dB_t = - rB_t dt. \]
So we have:
\begin{eqnarray*}
dV_t &=& \phi_t dS_t + \psi_t d B_t \\
&=& (\phi_tS_t\mu_t - \psi_trB_t)dt + \phi_tS_t\sigma_t dW_t.
\end{eqnarray*}
\\
But since the discounted value of $C_t$ is an $\mathcal{F}_t$-martingale:
\begin{equation} C_t e^{-rt} = \mathbb{E}(C_T e^{-rT} \mid \mathcal{F}_t), \label{introeq} \end{equation}
we can apply the Martingale Representation Theorem (see 1.1) to see that:
\[ C_t e^{-rt} = C_0 + \int^t_0 \pi_s dW_s \]
for some $\mathcal{F}$-predictable and square integrable $\pi_t$.
Then the following equality must hold:
\begin{eqnarray*}
C_t &=& V_t \mbox{  ,  } \forall t \leq T. \\
\mbox{So, } d C_t &=& dV_t \\
rC_tdt + e^{rt}\pi_t dW_t &=& (\phi_tS_t\mu_t - \psi_trB_t)dt + \phi_tS_t\sigma_t dW_t. 
\end{eqnarray*}
And thus, we now have a replicating strategy:
\begin{eqnarray*}
\phi_t &=& \frac{e^{rt}\pi_t }{S_t \sigma_t}, \\
\psi_t &=& \frac{\phi_tS_t\mu_t - rC_t}{rB_t}.
\end{eqnarray*}
While this result is a useful indication of how Hedging works, it raises a lot of questions and by itself cannot be applied directly. First of all, what is $\pi_t$? Without an explicit form for $\pi_t$, the above formula cannot be utilised in practical simulations. This is where the Clark-Ocone formula comes in. From the conventional form developed by Clark \cite{Clark}, the integrand $\pi_t$ has an explicit formula for continuous martingales contained within a Sobolev space $\mathbb{D}_{1,2}$ (see extensive definition in Chapter 2). The result can be extended to work beyond $\mathbb{D}_{1,2}$ and with martingales in various forms. Additionally the result \ref{introeq} states that $e^{-rt}C_t$ is a straightforward $\mathbb{P}$ /$\mathcal{F}$-martingale, however this is often not the case.\\ 
In order to achieve a condition like in \ref{introeq} we need to perform a change of measure, consequently the MRT and Clark-Ocone formula need to be adapted. In the same vein the MRT and Clark-Ocone formula can be modified in order to perform an enlargement of filtrations. For our uses this is required when we are looking at adding extra information to the base filtration $\mathcal{F}$ and hence creating a larger filtration $\mathcal{G}$. $\mathcal{F}$-martingales can be transformed into $\mathcal{G}$-martingales by adding a drift term $\mu$, which can be done in a similar way to changing measures with the Girsanov theorem. We will look at deriving a measure-valued adaptation of the MRT and Clark-Ocone formula to indentify the drift term $\mu$ and allow for the enlarged filtration $\mathcal{G}$ to exist.  \\
 \\ What justifies the use of the Martingale representation theorem within $L^2(\Omega, \mathcal{F}, \mathbb{P})$, for a given triple $(\Omega, \mathcal{F}, \mathbb{P})$? Why does it work and what does it indicate about the stochastic space on which traded assets evolve? One aim of this paper is to investigate the bottom-line facts that make integral representation possible for a given class of stochastic processes\\
\\ The example we have developed above is based on a continuous Brownian filtration. Despite their usefulness, works on Brownian filtrations are too restrictive as real financial assets are not just driven by continuous stochastic processes. In order to produce more realistic results, a large number of papers have focused their attention on understanding the representation of L$\acute{e}$vy and jump martingales. In this paper we will aslo explore the uses of the MRT and Clark-Ocone formula beyond continuous processes, notably through the work of Chou $\&$ Meyer \cite{choumeyer}, Boel, Wong $\&$ Varaiya \cite{Boel} and Davis \cite{Davis2}.\\ 
\\In the 1st section we review how to prove the MRT for continuous and general filtrations, in order to highlight the fundamental structure of the $L^2$ spaces within which financial assets evolve. In the 2nd section, we build on the proven results of section 1 to give an explicit representation to the integrand $\pi_t$ in the continuous and dis-continuous cases. In the 3rd section, we explore the generalization of the MRT and Clark-Ocone formula, notably through changes of measures and filtration \\
\\ Throughout this paper, we will assume knowledge of Mathematical finance explored in most introductionary courses. Primarily, we assume the readers have a background in stochastic calculus and familiarity with the definition of Stochastic bases, sigma-fields, filtrations, martingales and uniform integrability (U.I.). We also assume the readers have an understanding of the Girsanov theorem and Radon-Nikodym derivative, as well as of concepts of portfolio Hedging.

\section{The Martingale Representation theorem: Statement and Proof}

We begin this chapter by stating the theorem of interest in its most familiar form for Brownian martingales.

\subsection{Martingale representation theorem:  A regular Brownian motion approach }

Let $B=(B_t)$ be a Brownian motion on the stochastic base $(\Omega, \mathcal{F}, (\mathcal{F}_t), \mathcal{P})$ and $\mathcal{F}_t$ the augmented filtration generated by B. If X is square-integrable and $\mathcal{F}_{\infty}$-measurable, then \cite{wiki}: \\ \\ $\exists$  predictable process C adapted to $\mathcal{F}_t$ s. a.
\begin{eqnarray*}
X &=& \mathbb{E}(X) + \int^{\infty}_0 C_s dB_s , \\
\mbox{ and that: } \mathbb{E}(X|\mathcal{G}_t)& =& \mathbb{E}(X) + \int^{t}_0 C_s dB_s \mbox{, } \forall t \geq 0.
\end{eqnarray*}

\noindent We introduce the following notation which will be used throughout:
\[ \mathcal{M}^2 = \{ x_t \in L^2(\mathcal{F}_t,\mathbb{P}) \mid x_t \mbox{-u.i. martingale} \}. \]
For Martingales $(M_t) \in \mathcal{M}^2$ defined on $[0,T]$, the above result translates into the following with a 1-D Brownian motion:
\\ \\  $\exists$ predictable process C adapted to $F_t$ s. a.
 \begin{eqnarray*}
M_T &=& M_0 + \int^T_0 C_s dB_s , \\
\mbox{ and that: } M_t&=& \mathbb{E}(M_T|\mathcal{G}_t) = M_0 + \int^{t}_0 C_s dB_s \mbox{, } \forall t \geq 0.
\end{eqnarray*}

\noindent More generally, we use a multi-D Brownian motion as follow \cite{barnett}:

\begin{eqnarray*}
\exists n \mbox{ s.a. } B_t &=& \big\{ B^1_t,....,B^n_t\big\} \mbox{ and, } \forall X \in \mathcal{M}^2 \mbox{, we have:} \\
X_t &=& X_0 +  \int^t_0 \zeta_s \cdot dB_s \\
\mbox{with } \zeta_t &=& \big\{ \zeta^1_t,....,\zeta^n_t\big\} \mbox{ , } \zeta^i_t \mbox{-square integrable and predictable } \forall i.
\end{eqnarray*}


\subsection{Proofs: the fundamentals of integral representations in the $L^2$ spaces }

There are several approaches to proving the Martingale Representation Theorem in the continuous case as well as in the jump process sphere, but they tend to share common characteristics. Notably, it is generally agreed that these representation formulae originate from the fact that sets of integrals of specific processes are dense in $\mathcal{M}^2$. One of the most well-known works using this approach is from $\O$ksendal \cite{Oksendal1} and is as follows.

\subsubsection{$\O$ksendal's proof of the MRT in the continuous BM case}

Let $B := (B_t)$ be a n-dimensional Brownian Motion on the stochastic triple $(\Omega, \mathcal{F}, \mathbb{P})$, and $\mathcal{F}_t \subseteq \mathcal{F} $ be as follows:
\[ \mathcal{F}_t = \sigma \big\{ B_s(\omega) \mid s \leq t \big\} \]

We introduce a couple of definitions that will be used throughout:

\begin{defn}[{\bf Strong Orthogonality}]
 M, N $\in \mathcal{M}^2$ are strongly orthogonal if MN $\in \mathcal{M}^1$. \cite{Nualart}
\label{strongorthog}
\end{defn}

\begin{defn}[\bf{Weak Orthogonality}]
Two stochastic processes $X_t$ and $Y_t$, $t \in [0,T]$, are said to be weakly orthogonal when:
\[ \mathbb{E}(Y_T(\omega)X_T(\omega)) = 0 \mbox{ a.s.} \]
\end{defn}

\begin{thm} $\Psi$ = $\{ X_T \mid X_T = e^{\int^T_0 h(s) dW_s - \frac{1}{2} \int^T_0 h^2(s)ds}, h \in L^2([0,T]) \}$ is dense in $L^2(\mathcal{F}_T, \mathbb{P})$ \cite{Oksendal1}
\end{thm}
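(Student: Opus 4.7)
The plan is to establish density via the standard duality argument: since $\Psi$ is a subset of $L^2(\mathcal{F}_T, \mathbb{P})$, it suffices to show that any $g \in L^2(\mathcal{F}_T, \mathbb{P})$ satisfying $\mathbb{E}[g X_T] = 0$ for every $X_T \in \Psi$ must vanish almost surely. This reduces the representation question to a uniqueness problem for a signed measure built from $g$.

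First I would restrict attention to step functions $h = \sum_{i=1}^n \lambda_i \mathbf{1}_{(t_{i-1}, t_i]}$ with $0 = t_0 < t_1 < \cdots < t_n \leq T$ and $\lambda_i \in \mathbb{R}$. For such $h$ the stochastic integral $\int_0^T h(s)\, dB_s$ collapses to the finite sum $\sum_i \lambda_i (B_{t_i} - B_{t_{i-1}})$, and the It\^o correction $\frac{1}{2}\int_0^T h^2\, ds$ is a deterministic constant. The orthogonality assumption then yields $\mathbb{E}\bigl[g \exp\bigl(\sum_{i=1}^n \lambda_i (B_{t_i} - B_{t_{i-1}})\bigr)\bigr] = 0$ for every real vector $(\lambda_1,\ldots,\lambda_n)$.

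The crucial next step is to upgrade this identity to complex arguments. Using H\"older's inequality together with the Gaussian tail bounds on the increments $B_{t_i} - B_{t_{i-1}}$, one shows that $(\lambda_1,\ldots,\lambda_n) \mapsto \mathbb{E}\bigl[g \exp\bigl(\sum_i \lambda_i (B_{t_i} - B_{t_{i-1}})\bigr)\bigr]$ defines an entire function on $\mathbb{C}^n$, with differentiation under the expectation justified by uniform integrability on compact subsets. Since this entire function vanishes on $\mathbb{R}^n$, the identity theorem in several complex variables forces it to vanish on $(i\mathbb{R})^n$ as well. Setting $\lambda_j = i u_j$ identifies the resulting expression with the characteristic function of the finite signed measure $A \mapsto \mathbb{E}[g \mathbf{1}_A]$ restricted to $\sigma(B_{t_1},\ldots,B_{t_n})$, so by uniqueness of characteristic functions $\mathbb{E}[g \mid \sigma(B_{t_1},\ldots,B_{t_n})] = 0$.

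Finally, choosing a sequence of partitions whose meshes shrink to zero along a countable dense subset of $[0,T]$ produces an increasing family of sub-$\sigma$-algebras whose union generates $\mathcal{F}_T$. L\'evy's upward martingale convergence theorem then yields $g = \mathbb{E}[g \mid \mathcal{F}_T] = 0$ in $L^2$, completing the density argument. I expect the principal obstacle to be the analytic-continuation step: one must secure enough integrability of $g$ against the Gaussian exponentials to legitimize complex-analytic manipulations, and then invoke the several-variable identity theorem to propagate the vanishing from $\mathbb{R}^n$ to $(i\mathbb{R})^n$. Once past that hurdle, the remainder is essentially routine measure theory.
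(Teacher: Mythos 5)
Your proposal is correct and follows essentially the same route as the paper: reduce density to showing that any $g$ orthogonal to all stochastic exponentials vanishes, evaluate the orthogonality relation at step-function integrands to obtain a function of $(\lambda_1,\dots,\lambda_n)$ that vanishes on $\mathbb{R}^n$, and analytically continue this vanishing to purely imaginary arguments. The only divergence is in the endgame, where the paper inverts the Fourier transform against $C^{\infty}_0$ cylinder test functions and invokes their density in $L^2(\mathcal{F}_T,\mathbb{P})$, while you invoke uniqueness of characteristic functions of finite signed measures followed by L\'evy's upward convergence along refining partitions; these two devices are interchangeable here and rest on the same underlying fact that the cylinder $\sigma$-fields generate $\mathcal{F}_T$.
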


\begin{proof} This statement is true if $\forall g \in L^2(\mathcal{F}_T, \mathbb{P})$, g is not perpendicular to X for any X with $X_T \in \Psi$. We start by assuming that there is a g$\in L^2(\mathcal{F}_T, \mathbb{P})$ such as $g \perp X_T$. Strong orthogonality implies weak orthogonality so that means we have:
\begin{eqnarray*}
 \mathbb{E}(g(w)X_T(w)) &=& 0 \mbox{ a.s.} \\
 \int_{\Omega} e^{\int^T_0 h(s) dW_s(w) - \frac{1}{2} \int^T_0 h^2(s)ds} g(w) d\mathbb{P} &=& 0 \\
e^{- \frac{1}{2} \int^T_0 h^2(s)ds}  \int_{\Omega} e^{\int^T_0 h(s) dW_s(w) } g(w) d\mathbb{P} &=& 0.
\end{eqnarray*}
In particular, $\forall \lambda \in \mathbb{R}^n$ we have the following:
\[ G(\lambda):=  \int_{\Omega} e^{\sum^n_i \lambda_i W_{t_i} } g(w) d\mathbb{P} = 0. \]
$G(\lambda)$ is real analytic in $\mathbb{R}^n$, and thus has an analytic extension to the complex space $\mathbb{C}^n$:
\[ G(z):=  \int_{\Omega} e^{\sum^n_i z_i W_{t_i} } g(w) d\mathbb{P} = 0 \mbox{ , } \forall z \in \mathbb{C}^n \]

We know from \cite{Oksendal1} that $C^{\infty}_0(\mathbb{R}^n)$ is dense in $L^2(\mathcal{F}_T, \mathbb{P})$. However, $\forall \phi \in C^{\infty}_0(\mathbb{R}^n)$, we have:

\begin{eqnarray*}
\mathbb{E}(\phi(W)g(w)) &=&  \int_{\Omega} \phi(W_{t_1},..., W_{t_n}) g(w) d\mathbb{P} \\
&=& \int_{\Omega} (2\pi)^{-n/2} \Big( \int_{\mathbb{R}^n} \hat\phi(y)e^{iW\cdot y}dy \Big) g(w) d\mathbb{P} \mbox{ (Inverse Fourier transform)} \\
&=&  (2\pi)^{-n/2} \int_{\mathbb{R}^n} \hat\phi(y) \Big( \int_{\Omega} e^{iW \cdot y} g(w) d\mathbb{P} \Big) dy \mbox{ (Fubini's Theorem)} \\
&=&  (2\pi)^{-n/2} \int_{\mathbb{R}^n} \hat\phi(y) \Big( \int_{\Omega} e^{ i(\sum_i W_iy_i)} g(w) d\mathbb{P} \Big) dy \\
&=&  (2\pi)^{-n/2} \int_{\mathbb{R}^n} \hat\phi(y)  G(iy) dy = 0.
\end{eqnarray*}
This can only be true iff g$\equiv$0. 
\end{proof}

Define analogously the following spaces:
\[ \Psi_t = \{ X_t \mid X_t = e^{\int^t_0 h(s) dW_s - \frac{1}{2} \int^t_0 h^2(s)ds}, h \in L^2([0,T]) \} \mbox{ dense in } L^2(\mathcal{F}_t,\mathbb{P}) \]

$\forall Y_t(w) \in \Psi_t$, it is easy to see that:
\[ dY_t = Y_t h(t) dW_t  \mbox{ (Ito's lemma), }\]
hence $\exists h(t) \in L^2([0,T])$

\begin{eqnarray*}
Y_t &=& Y_0 + \int^t_0 Y_s h(s) dW_s \\
&=& 1 + \int^t_0 Y_s h(s) dW_s \\
&\mbox{ or }& \mathbb{E}(Y_t) + \int^t_0 Y_s h(s) dW_s.
\end{eqnarray*}

As $\Psi$ is dense in $L^2(\mathcal{F}_T, \mathbb{P})$, we can approximate any element of $L^2(\mathcal{F}_T, \mathbb{P})$ with suitable convergent elements of $\Psi$. So $\forall X \in L^2(\mathcal{F}_T), \exists (\psi_n) \in \Psi$ such as $\psi_n \rightarrow X$ in $L^2(\mathcal{F}_T)$. Since any convergent sequence  $\psi_n \in \Psi$ is also a cauchy sequence, $(\psi_n)$ is cauchy in $L^2(\mathcal{F}_T)$:
\begin{equation} \mathbb{E}( (\psi_n - \psi_m)^2) \rightarrow 0 \mbox{ as }m,n \rightarrow +\infty,
\label{cauchy} \end{equation}

where we have:
\[ \psi_n = \mathbb{E}(\psi_n) + \int^T_0 f_n(s,\omega)dW_s.\]
The function $f_n$ is defined as
\[ f_n(t,\omega) := \psi_n(\omega)h_n(t), \] 
and $f_n(T,\omega) \in L^2([0,T] \times \Omega)$.
So 
\begin{eqnarray*}
\mathbb{E}( (\psi_n - \psi_m)^2) &=& \mathbb{E}\big( (\mathbb{E}( \psi_n - \psi_m) + \int^T_0 f_n-f_m dW_s)^2 \big) \\
&=& \mathbb{E}( \psi_n - \psi_m)^2 + \int^T_0 \mathbb{E}((f_n-f_m)^2) ds \mbox{ (Ito's isometry)}\\
& \geq & \int^T_0 \mathbb{E}((f_n-f_m)^2) ds.
\end{eqnarray*} 

Result \ref{cauchy} implies that $\forall A_T \in L^2(\mathcal{F}_T, \mathbb{P})$, $\exists f(t) \in L^2(\mathbb{R} \times \Omega), \psi_n \in \Psi$ such as $ f_n \rightarrow f$ as $n \rightarrow \infty$ where:
\begin{equation} A_t = \lim_{n \rightarrow \infty} \psi_n = \mathbb{E}(A) + \int^t_0 f(s) dW_s \label{itorepth}. \end{equation}

Result \ref{itorepth} is knowns as the {\bf Ito Representation Theorem}. \cite{Oksendal1}
Indeed, in this framework the function f(t) is unique: Assume there are 2 functions $f_1(t)$ and $f_2(t)$ satisfying \ref{itorepth}, then:
\begin{eqnarray*}
A_t(w) = \mathbb{E}(A) + \int^t_0 f_1(s) dW_s = \mathbb{E}(A) + \int^t_0 f_2(s) dW_s \\
\Longrightarrow 0=(A_t-A_t)^2= \int^t_0 \mathbb{E}((f_1(s)-f_2(s))^2) ds \\
\Longrightarrow f_1(t) = f_2(t) \mbox{ a.s. for all t} \in [0,T].
\end{eqnarray*}

At this stage, the martingale representation theorem is just an application of the Ito representation theorem (equation \ref{itorepth}) to $\mathcal{M}^2([0,T]) \subset L^2(\mathcal{F}_T, \mathbb{P})$:
\[ \forall A \in \mathcal{M}^2([0,T]) \mbox{ ,} \exists ! f(t)  \in L^2((\mathcal{F}_T, \mathbb{P}) \times [0,T]) \mbox{ s.a. } A_t = \mathbb{E}(A_T) + \int^t_0 f(s) dW_s. \]

The dense set approach combined with the orthogonality argument, beyond giving a representation formula for martingales, makes a general statement about the $L^2(\mathcal{F}_T, \mathbb{P})$ space. In this setting, $\mathcal{M}^2$ and $L^2(\mathcal{F}_T, \mathbb{P})$ are Hilbert spaces under a suitable inner product:

\[  L^2(\mathcal{F}_T, \mathbb{P}) = \bigoplus^{\infty}_{i=1} \mathcal{S}(M_i), \]
where
\[ \mathcal{S}(M_i) = \big\{ \int_{[0,T]} \phi dM_i \mid \phi \in L^2( < M_i >) \big\} \mbox{ , } \exists M_i \in L^2(\mathcal{F}_T, \mathbb{P}),  i  \in \mathbb{N}.\]
$\mathcal{S}(M_i) $ is also known as the Stable Subspace generated by $M_i$. (\cite{Davis1}) \\

This approach is widely explored  elsewhere in the literature, notably in Dellacherie \cite{Dellacherie}, Yor \cite{Yor} and  Kunita $\&$  Watanabe \cite{KunitaWatanabe}.\\
While this idea and framework is very dominant in works on continuous martingales, it does not limit itself to the extends of the Brownian Filtration. This is indeed the case as shown in the work of L$\o$kka \cite{Lokka2} and Davis \cite{Davis2}.

\subsubsection{MRT for martingales driven by Poisson and pure-jump processes }
\paragraph{setting}

In this section, we bring our attention to a specific stochastic process, the Compensated Poisson process:
Let $N_t$ be a poisson process of intensity $\lambda$ and define:

\begin{eqnarray*}
\bar{N}_t &=& N_t - \lambda t.\\
\end{eqnarray*}

$\bar{N}_t $ has the following characteristic function:

\begin{eqnarray*}
\phi_{\bar{N}_t}(z) &=& e^{\lambda t (e^{iz}-1-iz)}.\\
\end{eqnarray*}
Additionally:

\begin{eqnarray*}
\mathbb{E}[\bar{N}_t | \bar{N}_s] &=& \mathbb{E}[ N_t - \lambda t | \bar{N}_s] \\
&=& \mathbb{E}[ N_t - N_s  | \bar{N}_s] - \lambda t + N_s \\
&=& N_s - \lambda t + \lambda (t-s) \\
&=& \bar{N}_s.
\end{eqnarray*}

So $\bar{N}_t$ is a martingale: $ \forall t > s$ , $  \mathbb{E}[\bar{N}_t | \bar{N}_s] =  \bar{N}_s$. \\
When rescaled, it has similar properties to a regular Brownian motion:

\begin{eqnarray*}
\mathbb{E}\Big[ \frac{\bar{N}_t }{\lambda} \Big] &=& 0\\
\mathbb{V}\Big[ \frac{\bar{N}_t }{\lambda} \Big] &=& t,
\end{eqnarray*}

hence $\bar{N}_t $ is a likely candidate for the integrator in the MRT, especially since it has the following curious property \cite{poisson}:

\begin{eqnarray*}
\left( \begin{array}{c} 
\bar{N}_t  \\
\lambda \end{array} \right) \rightarrow (W_t) \mbox{ as } \lambda \rightarrow \infty.
\end{eqnarray*}

As we will see, this type of jump processes is very important to the representation of pure-jump martingales, as the use of Poisson-generated filtrations is very common in the literature on applications to hedging strategies. Indeed, as we will show below, martingales evolving on a stochastic base can be conveniently represented in an integral form using compensated poisson processes.\\

We base ourselves on the work developed in L$\o$kka (2005) \cite{Lokka2} and start with $([0,T],\mathcal{B},\lambda)$ where $\mathcal{B}$ is the Borel $\sigma$-algebra and $\lambda$ is a Radon-measure that charges all open sets and that is diffuse over $\mathcal{B}$. Then we set:
\begin{eqnarray*}
\Omega &=& \{ \omega = \sum^n_j \delta_{t_j} \mid n \in \mathbb{N} \cup \{ \infty\}, t_j \in [0,T] \}, \\
\mathcal{F}_0 &=& \sigma\{ \omega(A) \mid A \subseteq \mathcal{B} \}, \\
\mathbb{P} &:& \mbox{ Probability measure s.a. } t \mapsto \omega([0,t]) \mbox{ is a Poisson process,} \\
\mathcal{F} &=& \mathbb{P}\mbox{-completion of } \mathcal{F}_0.\\
\end{eqnarray*}

On this new stochastic base we introduce L, a square-integrable Poisson jump process defined as:
\[ L_t = \int^t_0 \int_{\mathbb{R}_0} z \tilde{N}(dz,dt). \]
$\tilde{N}(z,t)$ is a compensated poisson process. L can be otherwise noted as follow:
\[ L_t = \int^t_0 \int_{\mathbb{R}_0} z (\mu-\pi)(dz,dt),\]
where $\mu-\pi$ is the measure of the compensated poisson process $\tilde{N}(z,t)$. $\mu$ is the Poisson measure of the process $(L_t)$:
\[ \mu(\Lambda, \Delta t) = \sum_{s \in \Delta t} 1_{\Lambda}(\Delta L_s) \]
and $\pi$ is the compensator of $\mu$ and can be defined by the following relation:
\[ \mathbb{E}(L^2_t) = \int_{\mathbb{R}_0} z^2 \pi(dz,dt). \]
$\pi$ is known to have the form $\pi(dz,dt) = \upsilon(dz)dt$, where $\upsilon$ is the L$\acute{e}$vy measure of L. \cite{Lokka2} \\
At this stage, we can consider the complete filtered probability space $(\Omega, \mathcal{F}, (\mathcal{F}_t), \mathbb{P})$ by setting the following:
\[ \mathcal{F}_t = \sigma\{ L_s \mid s \leq t \}. \] 

\paragraph{Representation results}

Now that we have discussed the setting of the underlying stochastic base $(\Omega, \mathcal{F}, (\mathcal{F}_t), \mathbb{P})$ , we can state and prove several results that will then lead to an integral representation formula for martingales in $L^2(\mathcal{F}_T,\mathbb{P})$.

\begin{thm} 
$\mathcal{A} = \{ \phi(L_{t_1},...,L_{t_n}) \mid t_i \in [0,T], t_0=0, t_n=T, \phi \in C^{\infty}_0(\mathbb{R}^n), n \in \mathbb{N} \}$  \\ is dense in $L^2(\mathcal{F}_T,\mathbb{P})$.
\label{Lokka2thm1}
\end{thm}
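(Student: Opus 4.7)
I would mirror the orthogonality-plus-Fourier strategy that $\O$ksendal uses to prove density of $\Psi$ in $L^2(\mathcal{F}_T,\mathbb{P})$. Suppose $g \in L^2(\mathcal{F}_T, \mathbb{P})$ satisfies
\[ \mathbb{E}\bigl[g\, \phi(L_{t_1},\ldots,L_{t_n})\bigr] = 0 \]
for every grid $0 \le t_1 < \cdots < t_n \le T$ and every $\phi \in C^\infty_0(\mathbb{R}^n)$. The objective is to conclude $g = 0$ almost surely, which by the orthogonal-complement characterization is equivalent to density of $\mathcal{A}$ in $L^2(\mathcal{F}_T,\mathbb{P})$.

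Step 1 would be to fix a grid and rewrite the orthogonality condition using the Fourier representation $\phi(x) = (2\pi)^{-n/2}\int \hat\phi(y)\,e^{i x \cdot y}\,dy$. Applying Fubini yields
\[ \int_{\mathbb{R}^n} \hat\phi(y)\, G(y)\, dy = 0, \qquad G(y) := \mathbb{E}\bigl[g\, e^{i y \cdot (L_{t_1},\ldots,L_{t_n})}\bigr]. \]
Since $\{\hat\phi : \phi \in C_0^\infty(\mathbb{R}^n)\}$ is dense in $L^2(\mathbb{R}^n)$ (it already contains the Schwartz class), and $G$ is continuous by dominated convergence, this forces $G \equiv 0$ on $\mathbb{R}^n$.

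Step 2 extends this to measurable test functions. Vanishing of $G$ everywhere is equivalent, via Fourier inversion on the finite-dimensional law of $(L_{t_1},\ldots,L_{t_n})$, to $\mathbb{E}[g\,h(L_{t_1},\ldots,L_{t_n})] = 0$ for every bounded Borel $h$, and in particular for $h = 1_B$ with $B \in \mathcal{B}(\mathbb{R}^n)$. The cylinder events $\{(L_{t_1},\ldots,L_{t_n}) \in B\}$, varied over all grids and all Borel $B$, form a $\pi$-system generating $\mathcal{F}_T$ (since $t \mapsto L_t$ admits a c\`adl\`ag version, one can restrict to a countable dense set of times). A monotone class argument then gives $\mathbb{E}[g \cdot 1_A] = 0$ for every $A \in \mathcal{F}_T$; taking $A = \{g > 0\}$ and $A = \{g < 0\}$ forces $g = 0$ a.s.

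The main obstacle, relative to the Brownian proof already in the excerpt, is that the exponential-martingale shortcut is no longer directly available: the Dol\'eans-Dade exponential of $h \cdot L$ for a compensated Poisson integrator does not have the neat form $\exp\bigl(\int h\,dL - \tfrac{1}{2}\int h^2\,ds\bigr)$, and analytic continuation of $G$ to $\mathbb{C}^n$ is awkward because the characteristic exponent of $L$ involves the L\'evy measure $\nu$. The workaround is to stay on the real axis, rely on genuine Fourier inversion, and then absorb the measure-theoretic bookkeeping into a monotone class / $\pi$-system step, which is clean but unavoidable.
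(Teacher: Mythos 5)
Your argument is correct, but it takes a genuinely different route from the paper's. The paper proves Theorem \ref{Lokka2thm1} directly: it fixes a countable dense set of times $\{t_i\}$, forms the nested $\sigma$-fields $\mathcal{G}_n=\sigma(L_{t_1},\dots,L_{t_n})$ whose union generates $\mathcal{F}_T$, writes $g=\lim_{n}\mathbb{E}(g\mid\mathcal{G}_n)$ in $L^2$ by martingale convergence, represents each conditional expectation as $g_n(L_{t_1},\dots,L_{t_n})$ via the Doob--Dynkin lemma, and finally approximates the Borel function $g_n$ in $L^2$ of the finite-dimensional law by elements of $C^{\infty}_0(\mathbb{R}^n)$. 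That proof is constructive, uses no Fourier analysis, and never passes through orthogonal complements. Your duality scheme (assume $g\perp\mathcal{A}$, show $G(y)=\mathbb{E}\bigl[g\,e^{iy\cdot(L_{t_1},\dots,L_{t_n})}\bigr]\equiv 0$, invert on the finite-dimensional law, then run a $\pi$-system/monotone-class argument) is in fact the pattern the paper reserves for its Theorem \ref{lemma2lokka2} on exponential functionals, and it works here too; you are also right that the exponential-martingale shortcut is unavailable for the compensated Poisson integrator, which is exactly why the paper later introduces the truncation $\gamma$. The trade-off: the paper's route is shorter and needs only the density of $C^{\infty}_0(\mathbb{R}^n)$ in $L^2$ of a finite Borel measure, while yours is self-contained at the level of test functions and transfers verbatim to the exponential case. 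Two points to tighten in your version: in Step 1, $G$ is bounded and continuous but need not lie in $L^2(\mathbb{R}^n)$, so density of $\{\hat\phi\}$ in $L^2$ does not by itself force $G\equiv 0$; argue instead that $G$ is a tempered distribution annihilating $\{\hat\phi:\phi\in C^{\infty}_0(\mathbb{R}^n)\}$, which is dense in the Schwartz space, whence $G=0$ a.e.\ and then everywhere by continuity. And since $\mathcal{F}_T$ is the $\mathbb{P}$-completion of $\sigma\{L_s\mid s\le T\}$, your monotone-class step identifies $\mathbb{E}[g1_A]=0$ on a generating algebra of the uncompleted $\sigma$-field and extends to $\mathcal{F}_T$ only modulo null sets --- which is all you need to conclude $g=0$ a.s.
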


\begin{proof} Consider a set $\{t_i\}_{\mathbb{N}}$ dense in [0,T], and a nested set of $\sigma$-fields $\mathcal{G}_n = \sigma(L_{t_1},...,L_{t_n})$. Clearly, $\mathcal{G}_n \subset \mathcal{G}_{n+1} \forall n \in \mathbb{N}$. Also, $\mathcal{F}_T$ is the smallest $\sigma$-field containing all $\mathcal{G}_n$ for any n in $\mathbb{N}$ \cite{Lokka2}. Then, $\forall g \in L^2(\mathcal{F}_T,\mathbb{P})$,

\[ g = \mathbb{E}(g \mid \mathcal{F}_T) = \lim_{n \rightarrow \infty} \mathbb{E}( g \mid \mathcal{G}_n). \]
So by the Doob-Dynkin Lemma  we have :

\[ \exists g_n : \mathbb{R}^n \mapsto \mathbb{R} \mbox{ borel-measurable s.a. } \mathbb{E}(g \mid \mathcal{G}_n) = g_n(L_{t_1},...,L_{t_n}). \]
$g_n$ can be approximated in $L^2(\mathcal{F}_T, \mathbb{P})$ with $\phi_{n_m} \in C^{\infty}_0(\mathbb{R}^n)$ \cite{Folland}  where 

\[ \| g_n(L_t) - \phi_{n_m}(L_t) \|_{L^2(\mathbb{P})} \rightarrow 0 \mbox{ as } m \rightarrow \infty. \]
Hence, every g in $L^2(\mathcal{F}_T,\mathbb{P})$ can be approximated with a function $\phi \in C^{\infty}_0(\mathbb{R}^n)$.
\end{proof}

While this result is useful, we are looking to make a more precise statement about the structure of $L^2(\mathcal{F}_T, \mathbb{P})$. To do so, we start with the result below which is an adaptation of the regular It$\hat{o}$ Isometry;

\begin{thm}
 Whenever $\phi: \phi(t,z,\omega)$ is a predictable process, the below is true:
\[ \mathbb{E}\big( \big( \int^T_0 \int_{\mathbb{R}_0} \phi(t,z) (\mu-\pi)(dz,dt) \big)^2 \big) = \mathbb{E}\big( \int^T_0 \int_{\mathbb{R}_0} \phi^2(t,z) \pi(dz,dt) \big). \]
\label{Poissonitoisometry}
\end{thm}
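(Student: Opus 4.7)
The plan is to mimic the classical construction of the Itô integral, replacing the Brownian increments by increments of the compensated Poisson random measure $\mu-\pi$. I would proceed in three stages: establish the identity for elementary (simple) predictable integrands, use independence/orthogonality of the measure on disjoint sets, and then extend to general predictable $\phi$ by an $L^2$-density argument.

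First, I would define the class of simple predictable integrands as finite sums
\begin{equation*}
\phi(t,z,\omega) = \sum_{i=1}^N \alpha_i(\omega)\, \mathbf{1}_{(t_i,t_{i+1}]}(t)\, \mathbf{1}_{A_i}(z),
\end{equation*}
where $0 \le t_1 < \cdots < t_{N+1}\le T$, each $A_i\subseteq\mathbb{R}_0$ is bounded with $\nu(A_i)<\infty$, and $\alpha_i$ is bounded and $\mathcal{F}_{t_i}$-measurable. For such $\phi$ the double integral is the sum $\sum_i \alpha_i \,(\mu-\pi)(A_i\times(t_i,t_{i+1}])$. The key computation here is that $(\mu-\pi)(A\times(s,t])$ has mean zero, variance $(t-s)\nu(A)$, and that for disjoint product sets these variables are independent (this comes directly from the independent-increment structure of the Poisson random measure and the fact that $\pi(dz,dt)=\nu(dz)\,dt$ was stated just above the theorem).

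Next I would square the sum and take expectations. The cross terms $\alpha_i\alpha_j\,(\mu-\pi)(A_i\times(t_i,t_{i+1}])\,(\mu-\pi)(A_j\times(t_j,t_{j+1}])$ for $i\neq j$ (say $t_i<t_j$) vanish by conditioning on $\mathcal{F}_{t_j}$: the later increment is independent of $\mathcal{F}_{t_j}$ and has zero mean, so the conditional expectation kills the term. The diagonal terms give $\mathbb{E}(\alpha_i^2)\,(t_{i+1}-t_i)\,\nu(A_i)$, which is exactly $\mathbb{E}\int_0^T\!\!\int_{\mathbb{R}_0}\phi^2(t,z)\,\pi(dz,dt)$. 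This establishes the isometry on simple integrands.

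Finally I would carry out the density/extension step: simple predictable integrands of the above form are dense in the Hilbert space $\mathcal{H} := L^2\big([0,T]\times\mathbb{R}_0\times\Omega,\, \mathcal{P}\otimes\mathcal{B}(\mathbb{R}_0),\, dt\otimes d\nu\otimes d\mathbb{P}\big)$ of predictable processes; this is a standard measure-theoretic approximation (truncate $\phi$ in $z$ to sets bounded away from $0$, approximate in $t$ by left-continuous step functions, and approximate the $\omega$-dependence by bounded $\mathcal{F}_{t_i}$-measurable random variables). For $\phi\in\mathcal{H}$ take $\phi_n$ simple with $\phi_n\to\phi$ in $\mathcal{H}$; the isometry makes the corresponding integrals Cauchy in $L^2(\mathbb{P})$, so they converge to a limit which one defines as $\int_0^T\!\int_{\mathbb{R}_0}\phi\,(\mu-\pi)(dz,dt)$, and passing to the limit in both sides of the identity (already verified for each $\phi_n$) yields the stated equality. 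The main obstacle is not the simple-function computation, which is purely algebraic, but the density step: one must verify carefully that the class of simple predictable processes really is dense in $\mathcal{H}$, using the $\sigma$-finiteness provided by the L\'evy measure $\nu$ on $\mathbb{R}_0$ and a monotone class argument on the predictable $\sigma$-field.
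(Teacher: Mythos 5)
Your proposal is correct, but there is nothing in the paper to compare it against: Theorem \ref{Poissonitoisometry} is stated there without any proof, merely described as ``an adaptation of the regular It\^o Isometry'' with the burden deferred to L{\o}kka's paper. What you have written is the standard construction that fills this gap: verify the isometry on simple predictable integrands, where the cross terms die either by conditioning on the later $\mathcal{F}_{t_j}$ (distinct time intervals) or by independence of the Poisson random measure on disjoint sets in $z$ (same time interval, disjoint $A$'s), and the diagonal terms produce $\mathbb{E}(\alpha_i^2)(t_{i+1}-t_i)\upsilon(A_i)$ because the variance of a Poisson variable equals its mean; then extend by $L^2$-density, which simultaneously \emph{defines} the stochastic integral for general predictable $\phi\in L^2(\pi\times\mathbb{P})$ and transports the identity to the limit. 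Two small points to tighten. First, the class of simple integrands you wrote down, with a single set $A_i$ attached to each time interval and strictly increasing $t_i$, is slightly too small to be dense; you want finite linear combinations $\sum_{i,j}\alpha_{ij}\mathbf{1}_{(t_i,t_{i+1}]}\mathbf{1}_{A_j}$ allowing several disjoint $A_j$ over the same time interval --- your orthogonality argument already covers those extra cross terms, so this is only a matter of stating the class correctly before invoking the monotone class theorem. Second, the theorem as printed says ``whenever $\phi$ is a predictable process,'' but the identity is only meaningful for $\phi\in L^2(\pi\times\mathbb{P})$ (otherwise the right-hand side is infinite and the left-hand integral is not defined); your proof implicitly and correctly adds this hypothesis in the density step, and it is worth making that restriction explicit since the paper's own follow-up remark (``$\int_0^T\int_{\mathbb{R}_0}\phi\,\tilde N(dz,dt)\in L^2(\mathbb{P})$ if $\phi\in L^2(\pi\times\mathbb{P})$'') presupposes it.
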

One implication of this result is that $\int^T_0 \int_{\mathbb{R}_0} \phi(t,z) \tilde{N}(dz,dt) \in L^2(\mathbb{P})$ if $\phi \in L^2(\pi \times \mathbb{P})$. \\

Now set a new continuous function $\gamma:\mathbb{R}_0 \mapsto (-1,0) \cup (0,1)$ :

\begin{equation}
 \gamma(z) = \left\{
      \begin{array}{cc}
        e^z - 1& z < 0 \\
        1 - e^{-z} & z > 0
      \end{array}
      \right. \nonumber
\end{equation}

One nice property of $\gamma$ is that it approaches zero as fast as z does. Indeed:

\begin{eqnarray*}
\lim_{z \rightarrow 0} \frac{\gamma(z)}{z} &=& \left| \begin{array}{c} \lim_{z \rightarrow 0^+} \frac{1-e^{-z}}{z}  \\ \lim_{z \rightarrow 0^-} \frac{e^z - 1}{z}  \end{array} \right. \\
&=&  \left| \begin{array}{c} \lim_{z \rightarrow 0^+} e^{-z}  \\ \lim_{z \rightarrow 0^-} e^z  \end{array} \right. \mbox{ (Hospital rule)} \\
&=& 1
\end{eqnarray*}

This combined with the facts that $\upsilon\big( (-\infty, -1] \cup [1, \infty) \big) < \infty$ and that $\gamma$ is bounded implies that :
\[ e^{\gamma \lambda} - 1 \in L^2(\upsilon) \mbox{ , } \forall \lambda \in \mathbb{R} \]

and that $\forall h \in C([0,T])$,
\begin{eqnarray}
 e^{\gamma \lambda} -1 \in L^2(\pi) , \nonumber \\
 h \gamma \in L^2(\pi) \nonumber \\
e^{h\gamma} - 1 -h\gamma \in L^1(\pi) \label{conditionsLokka2}
\end{eqnarray}

The use of the function $\gamma$ and of the results \ref{conditionsLokka2} are necessary for us to proceed as $\exists \lambda \in \mathbb{R}$ such as $ e^{L_t \lambda} \notin L^2(\mathbb{P})$. Without these conditions, it is not possible to find an integral representation for elements of $L^2(\mathcal{F}_T,\mathbb{P})$. However, modifying the jumps of L with $\gamma$ ensures that the exponential of the modification lies in $L^2(\mathcal{F}_T,\mathbb{P})$.

\begin{thm}
The linear span of random variables of the form:
\begin{equation} \exp{ \Big\{ \int^T_0\int_{\mathbb{R}_0} h(t)\gamma(z)\tilde{N}(dz,dt) -  \int^T_0\int_{\mathbb{R}_0}(e^{h(t)\gamma(z)}-1-h(t)\gamma(z))\pi(dz,dt) \Big\} } \nonumber
\end{equation}
where h $\in C([0,T])$ is dense in $L^2(\mathcal{F}_T,\mathbb{P})$.
\label{lemma2lokka2}
\end{thm}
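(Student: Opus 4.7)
The plan is to mirror Øksendal's proof of Theorem 1.1: by a Hahn--Banach / orthogonality argument it suffices to show that any $g \in L^2(\mathcal{F}_T, \mathbb{P})$ weakly orthogonal to every element of the proposed linear span must vanish. Throughout, write $Z^h_T$ for the exponential displayed in the statement. The strategy is: recognise $Z^h_T$ as a Doléans-Dade exponential, specialise $h$ to step functions to exploit the independent-increments structure of $\tilde N$, pass to characteristic functions via analytic continuation, and finally invoke Theorem \ref{Lokka2thm1} or a refining-partitions argument to reach $g \equiv 0$.

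First, identify $Z^h$ as the stochastic exponential $\mathcal{E}(M^h)$ of the compensated integral $M^h_t := \int_0^t \int_{\mathbb{R}_0} h(s)\gamma(z)\tilde{N}(dz,ds)$. Itô's formula for jump semimartingales gives $dZ^h_t = Z^h_{t^-}\int_{\mathbb{R}_0}(e^{h(t)\gamma(z)}-1)\tilde{N}(dz,dt)$, so $Z^h$ is an $\mathcal{F}_t$-martingale, and conditions \eqref{conditionsLokka2} together with Theorem \ref{Poissonitoisometry} ensure $Z^h_T \in L^2(\mathcal{F}_T,\mathbb{P})$. Next, specialise $h$ to step functions $h_\lambda := \sum_{i=1}^n \lambda_i \mathbf{1}_{(t_{i-1}, t_i]}$ for a partition $0 = t_0 < \cdots < t_n = T$ and $\lambda \in \mathbb{R}^n$; these are $L^2$-limits of continuous $h$, and continuity of $h \mapsto Z^h_T$ in $L^2(\mathbb{P})$ extends the orthogonality assumption. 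Direct computation gives
$$Z^{h_\lambda}_T = \exp\Big(\sum_{i=1}^n \lambda_i Y_i - \sum_{i=1}^n (t_i - t_{i-1})\Phi(\lambda_i)\Big),$$
where $Y_i := \int_{t_{i-1}}^{t_i}\int_{\mathbb{R}_0}\gamma(z)\tilde{N}(dz, ds)$ are independent (by independent increments of $\tilde N$) and $\Phi(\lambda) := \int(e^{\lambda\gamma(z)}-1-\lambda\gamma(z))\upsilon(dz)$ is finite and deterministic. Dividing through by the deterministic factor, the orthogonality becomes $G(\lambda) := \mathbb{E}[g\exp(\sum_i \lambda_i Y_i)] = 0$ for every $\lambda \in \mathbb{R}^n$.

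Because $\gamma$ is bounded and $\int \gamma^2\,d\upsilon < \infty$, the joint law of $(Y_1,\dots,Y_n)$ has exponential moments in a complex strip containing both $\mathbb{R}^n$ and $i\mathbb{R}^n$, so $G$ admits a holomorphic extension and the identity theorem yields $G(i\xi) = \mathbb{E}[g\exp(i\sum_j \xi_j Y_j)] = 0$ for all $\xi \in \mathbb{R}^n$. Fourier uniqueness then gives $\mathbb{E}[g\,\phi(Y_1,\dots,Y_n)] = 0$ for every $\phi \in C_0^\infty(\mathbb{R}^n)$, equivalently $\mathbb{E}[g \mid \sigma(Y_1,\dots,Y_n)] = 0$.

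The remaining and most delicate step is to upgrade this to $g = 0$. Picking a refining sequence of partitions with mesh tending to zero and vertices dense in $[0,T]$, the $\sigma$-algebras $\mathcal{G}_k := \sigma(Y_i^{(k)})$ increase to some $\mathcal{G}_\infty$, and martingale convergence yields $\mathbb{E}[g \mid \mathcal{G}_k] \to \mathbb{E}[g \mid \mathcal{G}_\infty]$ in $L^2$. The main obstacle is identifying $\mathcal{G}_\infty$ with $\mathcal{F}_T$: a.s. $L$ has only countably many jumps on $[0,T]$, so for sufficiently fine partitions every jump is isolated in its own cell; $Y_i^{(k)}$ then determines $\gamma(\Delta L_{s_i})$, and injectivity of $\gamma$ recovers $\Delta L_{s_i}$, reconstructing $L$. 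A smoother variant, which sidesteps the limit-of-$\sigma$-algebras discussion, is to strengthen the Fourier step and show directly that $\mathbb{E}[g\,\phi(L_{t_1},\dots,L_{t_n})] = 0$ for every $\phi \in C_0^\infty(\mathbb{R}^n)$, after which Theorem \ref{Lokka2thm1} immediately gives $g \equiv 0$ and the theorem follows.
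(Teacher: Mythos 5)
Your proposal follows essentially the same route as the paper: establish square-integrability of the exponentials via It\^o's formula and the isometry of Theorem \ref{Poissonitoisometry}, run \O ksendal's orthogonality/analytic-continuation argument to conclude that $g$ is orthogonal to all $\phi(\tilde L_{t_1},\dots,\tilde L_{t_n})$ with $\phi\in C^{\infty}_0(\mathbb{R}^n)$, and then use injectivity of $\gamma$ to identify $\sigma(\tilde L)$ with $\sigma(L)=\mathcal F_T$ so that Theorem \ref{Lokka2thm1} applies --- the paper does this last step by constructing $\hat L_t=\int\gamma^{-1}(z)\,(\tilde\mu-\tilde\pi)(dz,dt)$ and checking $\hat L=L$, which is cleaner than your refining-partitions/martingale-convergence argument (finite partitions cannot isolate the infinitely many small jumps when $\upsilon$ is infinite) but rests on exactly the same idea. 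One caution: your proposed ``smoother variant'' of strengthening the Fourier step to obtain $\mathbb{E}[g\,\phi(L_{t_1},\dots,L_{t_n})]=0$ directly does not work as stated, because the orthogonality hypothesis only controls real exponentials of the $\gamma$-modified integrals and $e^{\lambda L_t}$ need not lie in $L^2(\mathbb{P})$ (which is precisely why $\gamma$ was introduced), so the analytic continuation is unavailable for $L$ itself and one must pass through $\sigma(\tilde L)=\sigma(L)$ in any case.
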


\begin{proof} Set 
\[ \tilde{L}_t = \int^t_0 \int_{\gamma(\mathbb{R}_0)} \gamma(z)(\mu-\pi)(dz,dt). \]

$\tilde{L}_t$ is a Poisson process with a poisson random measure $\tilde{\mu}(\gamma^{-1}(\Lambda),t)$. Clearly, we see that:
\[ \sigma(\tilde{L}) \subseteq \sigma(L). \]

Now introduce:
\[ \hat{L}_t = \int^t_0 \int_{\gamma(\mathbb{R}_0)} \gamma^{-1}(z)(\tilde{\mu}-\tilde{\pi})(dz,dt). \]

Again, $\sigma(\hat{L}) \subseteq \sigma(\tilde{L})$. $\hat{L}$ is also a Poisson process with poisson random measure $\hat{\mu}$:
\[ \hat{\mu}(\Lambda,t) = \tilde{\mu}(\gamma(\Lambda),t) = \mu(\gamma(\gamma^{-1}(\Lambda)),t) = \mu(\Lambda,t). \]

That implies that $\hat{L}=L$ and that $\sigma(\hat{L}) = \sigma(L)$. Hence we have $\sigma(\tilde{L}) = \sigma(L)$. \\
Take $Y_T$ be of the form stated in Theorem \ref{lemma2lokka2}. Then:

\[ dY_t = \int_{\mathbb{R}_0} Y_{t-} (e^{h(t)\gamma(z)}-1)(\mu-\pi)(dt,dz) \mbox{ (It$\hat{o}$'s formula).} \]
By the It$\hat{o}$ isometry (Theorem \ref{Poissonitoisometry}) and the continuity of the law of L, we have that \cite{Lokka2}
\[  \mathbb{E}(Y_t^2) = \exp{ \big( \int^t_0 \int_{\mathbb{R}_0} (e^{h(s)\gamma(z)}-1)^2\pi(dz,dt) \big)} \]
and
\[ \| Y_t\|^2_{L^2(\mathbb{P})} = \exp{ ( \| e^{h\gamma}-1 \|^2_{L^2(\pi)} ) }.\]

Since we know that $e^{h\gamma}-1  \in L^2(\pi)$, we deduce that $Y_T \in L^2(\mathbb{P})$. Additionally, since $e^{h\gamma}-1-h\gamma  \in L^1(\pi)$, we have that:

\[ e^{\int^T_0 h(t)d\tilde{L}_t} \in L^2(\mathbb{P}) \mbox{ , } \forall h \in C([0,T]), \]
and more specifically, for any $\lambda \in \mathbb{R}^n$ and $\{t_i\} \in [0,T]$,

\[ e^{\lambda \cdot \tilde{L}_t} \in L^2(\mathbb{P}). \]

The above then allows us proceed with the statement of theorem \ref{lemma2lokka2}. From here onwards, we follow the same argument explored with the B.M. in $\O$ksendal \cite{Oksendal1}: we assume that there is a g $\in L^2(\mathcal{F}_T,\mathbb{P})$ that is orthogonal to all variables of the form stated in theorem \ref{lemma2lokka2}.  Through a very similar argument, we find that for any $\phi \in C^{\infty}_0(\mathbb{R}^n)$,
\[ \int_{\Omega} \phi(\tilde{L}_{t_1},...,\tilde{L}_{t_n})gd\mathbb{P} = 0.\]

However, we also know from theorem \ref{Lokka2thm1} that the variables of type $\phi(\tilde{L}_{t_1},...,\tilde{L}_{t_n})$ are dense in $L^2(\mathcal{F}_T,\mathbb{P})$. Hence we must deduce that g$\equiv$0.
\end{proof} 

Therefore, based on Theorem \ref{lemma2lokka2}, we can prove the following representation theorem just as done in section 1.2.1.

\begin{thm}
$\forall F \in L^2(\mathcal{F}_T,\mathbb{P})$,  there is a unique process $\psi \in L^2([0,T] \times \mathbb{R}_0 \times \Omega, \pi \times \mathbb{P})$ such as:
\[F = \mathbb{E}(F) + \int^T_0 \int_{\mathbb{R}_0} \psi(t,z)(\mu-\pi)(dz,dt), \]
or
\[ F = \mathbb{E}(F) + \int^T_0 \int_{\mathbb{R}_0} \psi(t,z) \tilde{N}(dz,dt). \]
\label{lokkaMRTpoisson}
\end{thm}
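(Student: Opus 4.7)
The plan is to mimic the Brownian argument of \O{}ksendal's proof, using the density result of Theorem \ref{lemma2lokka2} as the substitute for the density of $\Psi$ and the Poisson It\^o isometry of Theorem \ref{Poissonitoisometry} in place of the usual one. Effectively, $L^2(\mathcal{F}_T,\mathbb{P})$ will be identified with the closure of stochastic integrals against $\tilde N$, via a Cauchy-sequence transfer between the $L^2(\mathbb{P})$-side and the $L^2(\pi\times\mathbb{P})$-side.

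First I would establish that every generator of the dense family already admits the sought representation. Let $Y_t$ be the Dol\'eans--Dade type exponential appearing in Theorem \ref{lemma2lokka2}, with parameter $h\in C([0,T])$. The It\^o formula computation already carried out in that proof gives
\[
dY_t \;=\; \int_{\mathbb{R}_0} Y_{t-}\bigl(e^{h(t)\gamma(z)}-1\bigr)(\mu-\pi)(dz,dt),
\]
and $Y_0=1=\mathbb{E}(Y_T)$ since $Y$ is a martingale (which is precisely why the compensating term was subtracted in the exponent). Integrating from $0$ to $T$ produces
\[
Y_T \;=\; \mathbb{E}(Y_T) + \int_0^T\!\!\int_{\mathbb{R}_0} \psi^Y(t,z)\,\tilde N(dz,dt),\qquad \psi^Y(t,z):=Y_{t-}\bigl(e^{h(t)\gamma(z)}-1\bigr),
\]
and the bound $\|Y_t\|_{L^2(\mathbb{P})}^2=\exp(\|e^{h\gamma}-1\|_{L^2(\pi)}^2)$ from the proof of Theorem \ref{lemma2lokka2}, together with the predictability of $Y_{t-}$, ensures $\psi^Y\in L^2(\pi\times\mathbb{P})$. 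By linearity, the same holds for any finite linear combination of such exponentials.

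Now take an arbitrary $F\in L^2(\mathcal{F}_T,\mathbb{P})$. By Theorem \ref{lemma2lokka2} I can choose a sequence $F_n$ in the linear span of those exponentials with $F_n\to F$ in $L^2(\mathbb{P})$, and write $F_n=\mathbb{E}(F_n)+\int_0^T\!\int_{\mathbb{R}_0}\psi_n\,\tilde N(dz,dt)$. Cauchyness of $F_n$ transfers to Cauchyness of $\mathbb{E}(F_n)$ in $\mathbb{R}$, hence to Cauchyness of $F_n-\mathbb{E}(F_n)$ in $L^2(\mathbb{P})$. Applying Theorem \ref{Poissonitoisometry} to the difference,
\[
\bigl\|(F_n-F_m)-\mathbb{E}(F_n-F_m)\bigr\|_{L^2(\mathbb{P})}^2 \;=\; \|\psi_n-\psi_m\|_{L^2(\pi\times\mathbb{P})}^2 \;\longrightarrow\; 0,
\]
so $(\psi_n)$ is Cauchy in the Hilbert space $L^2(\pi\times\mathbb{P})$ and converges to some $\psi$. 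Passing to the $L^2(\mathbb{P})$ limit on both sides, using the isometric continuity of $\phi\mapsto\int\phi\,\tilde N$, gives the desired representation of $F$.

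For uniqueness, two representations $\psi_1,\psi_2$ of the same $F$ would satisfy $\int_0^T\!\int_{\mathbb{R}_0}(\psi_1-\psi_2)\,\tilde N(dz,dt)=0$, and Theorem \ref{Poissonitoisometry} immediately forces $\psi_1=\psi_2$ in $L^2(\pi\times\mathbb{P})$. The main obstacle I anticipate is purely bookkeeping: checking that the integrands obtained from linear combinations of the Dol\'eans--Dade exponentials are genuinely predictable and square-integrable against $\pi\times\mathbb{P}$ so that the It\^o isometry can be invoked at each step. Once that is verified, the Hilbert-space closure argument proceeds verbatim as in the Brownian case of Section 1.2.1.
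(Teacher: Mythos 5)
Your proposal is correct and follows essentially the same route the paper takes: the paper proves Theorem \ref{lokkaMRTpoisson} by invoking the density result of Theorem \ref{lemma2lokka2} and repeating the argument of section 1.2.1 (explicit representation of the exponential generators via It\^o's formula, Cauchy-sequence transfer through the isometry of Theorem \ref{Poissonitoisometry}, and uniqueness from the same isometry). You have merely written out the details that the paper leaves implicit, and they check out.
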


Therefore for $\forall (m_t) \in \mathcal{M}^2$ where $m_{\infty} < \infty$  , $\exists ! \psi \in L^2([0,T] \times \mathbb{R}_0 \times \Omega, \pi \times \mathbb{P})$ such that we have:
\[ m_t = \mathbb{E}(m_{\infty} \mid \mathcal{F}_t) = \int^t_0 \int_{\mathbb{R}_0} \psi(t,z) \tilde{N}(dz,dt). \]

\paragraph{Extension to other jump processes}

The above representation result involves Compensated Poisson jump processes, but it is extendable to other type of jump processes. This is indeed what the work of Davis\cite{Davis2} shows.\\
 We start with a basic jump process ($x_t$) defined on a Blackwell space $(X, \mathcal{L})$. 
We remind the reader of the following definition: \\

\begin{defn}[{\bf Blackwell Spaces}]
 A measurable space (X,$\mathcal{A}$) is a Blackwell space if the $\sigma$-algebra $\mathcal{A}$ it is the smallest countably-generated $\sigma$-field and contains all singletons. In other words, $\exists$ countable sets $\{A_i \mid i \in \mathbb{N} \}$ that generate $\mathcal{A}$. This type of set is otherwise known as Lusin spaces, as introduced in Blackwell \cite{Blackwell}. \\
One important property is that any one-to-one $\mathcal{A}$-measurable mapping f: X $ \rightarrow (Y,\mathcal{B})$, where $\mathcal{B}$ is also countably generated, is an isomorphism. \cite{Bog}
\end{defn}
Now define $ (Y, \mathcal{Y}) = \Big( (\mathbb{R}^+\times X)\bigcup \{(\infty,z_{\infty})\}, \sigma\{\mathcal{B}(\mathbb{R}^+) \ast \mathcal{L},  \{(\infty,z_{\infty})\} \} \Big)$ and its copies $(Y^i, \mathcal{Y}^i)$, $i \in \mathbb{N}$, $z_0$, $z_{\infty}$ are fixed elements of X. Now set:

\begin{eqnarray*}
\Omega &=& \prod^{\infty}_{i=1} Y^i, \\
\mathcal{F}^0 &=& \sigma \Big\{ \prod^{\infty}_{i=1} \mathcal{Y}^i \Big\}, \\
\mbox{Let } (S_i,Z_i): \Omega \rightarrow Y^i &&\mbox{ be the coordinate mapping, and } \\
\omega_k : \Omega \rightarrow \Omega_k = \prod^k_{i=1} Y^i &\mbox{ s.a. }& \omega_k(w) = (S_1,Z_1,...,S_k,Z_k).
\end{eqnarray*}

We can now define:
\begin{eqnarray*}
T_k(w) = \sum^k_{i=1} S_i(w), \\
T_{\infty}(w) = \lim_{k \rightarrow \infty} T_k(w),
\end{eqnarray*}

and the jump process of interest:\\

\[x_t(\omega)= \left\{ \begin{array}{cc} z_0, & \mbox{if } t<T_1(\omega) \\ Z_i(\omega), & \mbox{if } t \in [T_i,T_{i+1}) \\ z_{\infty}, & \mbox{if } t \geq T_{\infty} \end{array} \right. .\]

This random variable generates an increasing family of $\sigma$-fields $(\mathcal{F}^0_t)$ where $\mathcal{F}^0_t = \sigma\{ x_s \mid s \leq t\}$. Then $\mathcal{F}_t(\mathcal{F})$ is the $\mathbb{P}$-augmented $\sigma$-field of $\mathcal{F}^0_t(\mathcal{F}^0)$ .\\
To go with $(\Omega, \mathcal{F}^0)$, we define the following family of probability measures $(\mu_i)$: for $\Gamma \in \mathcal{Y}$ and $\eta \in \Omega_{i-1}$
\begin{eqnarray*}
P[ (T_1,Z_1) \in \Gamma] = \mu^1(\Gamma),\\
P[ (T_i,Z_i) \in \Gamma |  \omega_{i-1} = \eta ] = \mu^i(\eta ; \Gamma).
\end{eqnarray*}
To ensure that the jump times of $x_t$ do not occur at once, we impose: 
\[ \mu^i(\omega_{i-1} ; (\{0\}\times X) \cup (\mathbb{R}^+ \times \{Z_{i-1}\})) = 0\mbox{ } \forall i \in \mathbb{N}. \]

There is a fundamental family of processes associated with the filtration generated by $(x_t)$. For A $\in \mathcal{L}$ and t $\in \mathbb{R}^+$ set:

\begin{eqnarray*}
p(t,A)(w) &=& \sum_i 1_{(t \geq T_i)}1_{(Z_i \in A)}, \\
\tilde{p}(t,A) &=& - \sum^{j-1} \int^{S_i}_0 \frac{1}{F^i_{s-}}dF^{iA}_s - \int^{t-T_{j-1}}_0 \frac{1}{F^i_{s-}}dF^{iA}_s , \mbox{ for } t \in (T_{j-1},T_j] \\
\mbox{where } && F^{iA}_t = \mu^i(\omega_{i-1};(t, \infty] \times A) \mbox{ and } F^i_t = P(T_i > t), \\
q(t,A) &=& p(t,A) - \tilde{p}(t,A).
\end{eqnarray*}

\begin{thm} 
 for fixed k and A $\in \mathcal {L}$, $q(t \wedge T_k, A)$ is an $\mathcal{F}_t$-martingale \cite{Davis2}
\end{thm}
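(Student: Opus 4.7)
The strategy is to exploit the fact that $q(t\wedge T_k,A)$ changes only at the jump times $T_j$ with $j\le k$, and to verify the martingale identity by a telescoping argument along these jump times, using the prescribed conditional distribution $\mu^j(\omega_{j-1};\cdot)$ of $(S_j,Z_j)$ given $\mathcal{F}_{T_{j-1}}$. I would first dispense with integrability and adaptedness: $p(t\wedge T_k,A)\le p(T_k,X)=k$ is bounded, $\tilde p(t\wedge T_k,A)$ is a sum of at most $k$ integrals of the form $-\int_0^{S_i}(F^i_{s-})^{-1}\,dF^{iA}_s$ each bounded in absolute value by $1$ (because $F^{iA}\le F^i$ and $-\int_0^{\infty}(F^i_{s-})^{-1}\,dF^i_s=1$), so $q(t\wedge T_k,A)$ is uniformly bounded. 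Adaptedness and predictability of $\tilde p$ are built into the definition via $\omega_{j-1}$.

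The core of the argument is the one-step identity: for every $j\ge 1$,
\[
\mathbb{E}\bigl[q(t\wedge T_j\wedge T_k,A)-q(t\wedge T_{j-1}\wedge T_k,A)\,\bigm|\,\mathcal{F}_{T_{j-1}}\bigr]=0.
\]
On $\{T_{j-1}\ge t\}\cup\{j-1\ge k\}$ both increments vanish. Otherwise, by inspection of the definitions, the increment of $p$ is $\mathbf{1}_{\{T_j\le t\}}\mathbf{1}_{\{Z_j\in A\}}$ while the increment of $\tilde p$ is $-\int_0^{(t-T_{j-1})\wedge S_j}(F^j_{s-})^{-1}\,dF^{jA}_s$. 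Using that $(S_j,Z_j)$ has conditional law $\mu^j(\omega_{j-1};\cdot)$, the first term contributes $F^{jA}_0-F^{jA}_{t-T_{j-1}}$, while Fubini applied to the second gives
\[
\mathbb{E}\!\left[\int_0^{u\wedge S_j}\frac{dF^{jA}_s}{F^j_{s-}}\,\bigg|\,\mathcal{F}_{T_{j-1}}\right]=\int_0^{u}\frac{F^j_s}{F^j_{s-}}\,dF^{jA}_s=F^{jA}_u-F^{jA}_0,
\]
so that the two increments cancel at $u=t-T_{j-1}$.

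Given the one-step identity, the full martingale property $\mathbb{E}[q(t\wedge T_k,A)\mid\mathcal{F}_r]=q(r\wedge T_k,A)$ for $r\le t$ follows by summing the telescoping increments across $j=1,\dots,k$ and applying the tower property through the stopping times $T_{j-1}\vee r$, together with a standard optional-sampling-type step to pass from the filtration at $T_{j-1}$ to the filtration at the deterministic time $r$ on the event $\{T_{j-1}\le r<T_j\}$.

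The main obstacle is the Fubini step above: the identity $F^j_s/F^j_{s-}=1$ almost everywhere with respect to $dF^{jA}_s$ is not a triviality when $F^j$ has atoms, since in general a jump of $F^j$ at $s$ could also be a jump of $F^{jA}$. Here it is rescued precisely by the non-simultaneity condition $\mu^j(\omega_{j-1};(\{0\}\times X)\cup(\mathbb{R}^+\times\{Z_{j-1}\}))=0$ imposed on the family $(\mu^i)$, combined with the fact that any atom of $F^j$ at $s$ corresponds to a point mass of $\mu^j$ at $(s,\cdot)$; one must check that the bookkeeping produces exactly a cancelling factor. Apart from that measure-theoretic point, the argument is mechanical.
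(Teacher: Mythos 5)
Your strategy is the same as the paper's: split $q(t\wedge T_k,A)$ into single-jump pieces supported on $(T_{j-1},T_j]$ and verify that each piece is a martingale by conditioning on $\mathcal{F}_{T_{j-1}}$. The difference is that the paper stops at the decomposition and simply asserts that ``it can then be shown that each element $q_i(t,A)$ is a martingale,'' whereas you actually carry out that verification: the computation of $\mathbb{E}[\mathbf{1}_{\{S_j\le u\}}\mathbf{1}_{\{Z_j\in A\}}\mid\mathcal{F}_{T_{j-1}}]=F^{jA}_0-F^{jA}_u$ and its cancellation against the conditional expectation of the compensator increment is precisely the content of the Chou--Meyer lemma that the paper defers to, so your write-up is in fact more complete than the one in the text. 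One correction to your final paragraph: the atom issue in the Fubini step is not resolved by the non-simultaneity condition $\mu^j(\omega_{j-1};(\{0\}\times X)\cup(\mathbb{R}^+\times\{Z_{j-1}\}))=0$, which only rules out a jump at time $0$ and fictitious jumps; it says nothing about atoms of the law of $S_j$, and $F^j$ may well charge points. The cancellation is exact for a different reason: the inner integral runs over the closed interval $(0,u\wedge S_j]$, so Fubini produces $\mathbb{E}[\mathbf{1}_{\{s\le S_j\}}\mid\mathcal{F}_{T_{j-1}}]=\mathbb{P}(S_j\ge s\mid\mathcal{F}_{T_{j-1}})=F^j_{s-}$, i.e.\ the \emph{left} limit, which cancels the $1/F^j_{s-}$ in the integrand identically, atoms or not; your stated intermediate identity with the factor $F^j_s/F^j_{s-}$ corresponds to the open interval and would indeed fail at atoms. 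With that repair the one-step identity, and hence the telescoping argument, goes through.
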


\begin{proof}
To address the above, we use the proof to an analogous proposition in Chou $\&$ Meyer \cite{choumeyer}:

\begin{eqnarray*}
q( t \wedge T_k, A) &=& p(t \wedge T_k,A) - \tilde{p}(t \wedge T_k,A) \\
&=& \sum_i 1_{(t \wedge T_k \geq T_i)}1_{(Z_i \in A)} + \sum^{j-1} \int^{S_i}_0 \frac{1}{F^i_{s-}}dF^{iA}_s + \int^{t-T_{j-1}}_0 \frac{1}{F^j_{s-}}dF^{jA}_s \\
&=& \sum^{j-1}_i \Big[ 1_{(t  \geq T_i)}1_{(Z_i \in A)} +  \int^{S_i}_0 \frac{1}{F^i_{s-}}dF^{iA}_s \Big] + 1_{(t  \geq T_j)}1_{(Z_j \in A)} +  \int^{t-T_{j-1}}_0 \frac{1}{F^j_{s-}}dF^{jA}_s \mbox{, where j $\leq$ k}
\end{eqnarray*}

Set $q_i(t,A) =  1_{(t  \geq T_i)}1_{(Z_i \in A)} +  \int^{S_i}_0 \frac{1}{F^i_{s-}}dF^{iA}_s $ $\forall i <j$, and $q_j(t,A) =  1_{(t  \geq T_j)}1_{(Z_j \in A)} +  \int^{t-T_{j-1}}_0 \frac{1}{F^j_{s-}}dF^{jA}_s$. Then
\[ q( t \wedge T_k, A) = \sum^j_i q_i(t,A) \mbox{, where $t \in [T_{j-1};T_j)$ whenever j $\leq$ k} \]
It can then be shown that each element $q_i(t,A)$ is a martingale.

\end{proof}

We define the following class of integrands:
\[ \mathcal{P} = \Big\{ g: \Omega \times Y \rightarrow \mathbb{R} \mid g(t,z,\omega) = g^k(\omega_{k-1}; t,z)1_{(T_{k-1},T_k]} \Big\}, \]
and:
\begin{eqnarray*}
L^1(p) = \Big\{ g \in \mathcal{P} \mid \mathbb{E}\big( \int_{\mathbb{R}^+ \times X} |g(t,z)|p(dt,dz)\big) < \infty \Big\}, \\
L^1_{loc}(p) = \Big\{ g \in \mathcal{P} \mid g1_{(t<\sigma_k)} \in L^1(p), k \in \mathbb{N}, \exists \mbox{ stopping times } \sigma_k \uparrow T_{\infty} \Big\}.
\end{eqnarray*}
$L^1(\tilde{p})$ and $L^1_{loc}(\tilde{p})$ are defined analogously.\\
Then for $g \in L^1_{loc}(p) $ and $ M^g_t  = \int_{(0,t] \times X} g(s,z) q(ds,dz)$, there is a sequence of stopping times $\tau_n \uparrow T_{\infty}$ where $M^g_{t \wedge \tau_n}$ is a u.i. martingale for all n $\in \mathbb{N}$.(\cite{Davis2}). This induces the following to be true:
\begin{equation}
M_t = M_{t \wedge T_1} + \sum^{\infty}_{k=2} \big( M_{t \wedge T_k}-M_{T_{k-1}}\big)1_{(t \geq T_{k-1})}
\label{uimgs}
\end{equation}
whenever $M_t$ is a u.i. martingale. To simplify \ref{uimgs} further, we set:
\begin{eqnarray*}
X^1_t &=& M_{t \wedge T_1}, \\
X^k_t &=&  M_{(t+T_{k-1}) \wedge T_k}-M_{T_{k-1}}.
\end{eqnarray*}
Then $M_t$ in \ref{uimgs} is transformed into:

\begin{equation}
M_t = \sum^{\infty}_{k=1} X^k_{(t-T_{k-1})\vee 0}.
\label{newmt}
\end{equation}

For fixed k and t$\geq$0, set $ \mathcal{H}_t = \mathcal{F}_{(t+T_{k-1}) \wedge T_k}$. According the the optimal sampling theorem, $(X^k_t)$ is a $\mathcal{H}_t $-martingale. In other words, there is a function $h^k$ such as:

\[ X^k_t = \mathbb{E}(h^k(\omega_{k-1};S_k,Z_k) \mid \mathcal{H}_t ).\mbox{ \cite{Davis2} } \]

Using proposition 5 of \cite{Davis2} and proposition 2 of \cite{choumeyer} we see that each u.i. martingale $X^k_t$ can be expressed as follow:
\begin{eqnarray*}
X^k_t = 1_{(t \geq T_k)}h^k(\omega_{k-1};S_k,Z_k) - 1_{(t < T_k)}\frac{1}{F^{k}_t} \int_{(0,t] \times X} h^k(\omega_{k-1};S_k,Z_k) dF^{k}(s,z).
\end{eqnarray*}
Working on the interval $[0,T_k)$, that gives:

\begin{eqnarray*}
X^k_t =  \frac{-1}{F^k_t}  \int_{(0,t] \times X} h^k(\omega_{k-1};S_k,Z_k) dF^{k}(s,z).
\end{eqnarray*}

Further calculations (\cite{Davis2}) then lead to the following end result:
\begin{equation}
X^k_t = \int_{(0,t]\times X} g^k(\omega_{k-1};s,z)q^k(ds,dz),
\label{xt}
\end{equation}

where $q^k(t,A)= q((t+T_{k-1}) \wedge T_k,A)$. Then \ref{xt} turns \ref{newmt}  into:

\begin{eqnarray}
M_t &=& \sum^{\infty}_{k=1} X^k_{(t-T_{k-1})\vee 0} \nonumber \\
 &=& \sum^{\infty}_{k=1} \int_{(0,(t-T_{k-1}) \vee 0]\times X} g^k(\omega_{k-1};s,z)q^k(ds,dz) \nonumber\\
&=&  \int_{(0,t] \times X} g(s,z)q(ds,dz).
\label{jumpmrt}
\end{eqnarray}

The function g $ \in \mathcal{P}$ is defined by the collection $\{ g^k \mid k \in \mathbb{N} \}$ such that \ref{jumpmrt} holds. This resulting function g $\in L^1_{loc}(p) $, as explained in \cite{Davis2}.\\
\\
We have seen so far that the Martingale representation theorem is applicable to a diverse range of probability spaces and martingales, but as the work of Nualart and Schoutens \cite{Nualart} in has shown, it can reach to much more general martingales.

\subsubsection{Nualart and Schoutens: MRT using general L$\acute{e}$vy processes}

In this setting we start with $X = \{ X_t \mid t \geq 0 \}$ being a L$\acute{e}$vy process defined on a complete stochastic triple $(\Omega, \mathcal{F}, \mathbb{P})$. $\mathcal{F}$ is the $\sigma$-field generated by X. We let:
\[ \mathcal{F}_t = \mathcal{G}_t \vee \mathcal{N}, \]
where
\[ \mathcal{G}_t = \sigma\{ X_s \mid s \leq t \} \]
and 
\[ \mathcal{N} = \{ A \mid A-\mathbb{P} \mbox{-null set of } \mathcal{F} \}. \]
Then we can introduce compensated power jump processes as follow:
\begin{itemize}
\item $X = (X_t)$ is a L$\acute{e}$vy process, i.e. X has stationary and independent increments, $X_0=0$ and is c$\grave{a}$dl$\grave{a}$g,
\item $X_{t-} = \lim_{s \rightarrow t, s<t} X_s$,
\item $\Delta X_t = X_t - X_{t-}$, 
\item $ X^i_t = \sum_{s \leq t} ( \Delta X_s)^i $ ,  and $X^1_t = X_t$,
\item $ Y^i_t = X^i_t - \mathbb{E}(X^i_t) := X^i_t - m_it$ are then a martingale called Teugels martingales \cite{Nualart} 
\end{itemize}

In here, $ \forall (m_t) \in \mathcal{M}^2, m_t = \mathbb{E}(m_{\infty} \mid \mathcal{F}_t) $, and $m_i$ are the ith central moments of the process X. \\
We can start with the case of i =1. There we have:
\[ Y^1_t = X_t - m_1t. \]
We can see that for any $s < t \leq T$ :

\begin{eqnarray*}
\mathbb{E}(Y^1_t  \mid \mathcal{F}_s) &=& \mathbb{E}( X_t - m_1t \mid \mathcal{F}_s)\\
&=&  \mathbb{E}( X_t - X_s\mid \mathcal{F}_s) -m_1t+X_s \\
&=& X_s-m_1t + m_1(t-s) \mbox{ (independent increments of L$\acute{e}$vy processes)}\\
&=& Y^1_s.
\end{eqnarray*}

Hence $Y^1$ is indeed a martingale. $Y^i$ is also a martingale for $i>1, i \in \mathbb{N}$, as shown in \cite{Schoutens}.\\
We introduce the following space:
\begin{equation}
\mathcal{H}= \Big\{ \int^{\infty}_0 f(t)dY^1_t \mid  f \in L^2(\mathbb{R}_+) \Big\} \subset L^2(\Omega). \label{Hforiequal1}
\end{equation}
Using It$\hat{o}$'s formula on X for $X \in L^2(\Omega, \mathcal{F}, \mathbb{P})$, we can prove that $\mathcal{H}$ is dense in  $L^2(\Omega, \mathcal{F}, \mathbb{P})$:

\begin{eqnarray*}
(X_{t+t_0} - X_{t_0}) &=& \int^t_0 d(X_{s+t_0} - X_{t_0}) \\
&& + \sum_{0 <s \leq t} \big[ (X_{s+t_0} -X_{t_0}) - (X_{(s+t_0)-} -X_{t_0}) -\Delta X_{s+t_0} \big] \mbox{ \cite{Nualart}}\\
&=& \int^t_0 d(X_{s+t_0} - X_{t_0}). \\
\mbox{ Set } t_0 = 0 &\mbox{ then, }& \\
X_t &=& \int^t_0 dX_s = \int^t_0 dX^1_s = \int^t_0 dY^1_s + m_1t = \mathbb{E}(X_t) + \int^t_0 dY^1_s.\\
\end{eqnarray*}
We can derive from the above a simplified, 1-dimensional form of the {\bf Predictable Representation Property (PRP)}:

\[ \forall X \in L^2(\Omega, \mathcal{F}, \mathbb{P}), \forall t \in [0,T], \exists \phi \in L^2(\Omega) \mbox{ s.a. }\]
\[ X_t =  \mathbb{E}(X_T) + \int^t_0 \phi_s dY^1_s. \]
As a consequence, we also have the following theorem

\begin{thm}
$\mathcal{H}$ is dense in $L^2(\Omega, \mathcal{F}, \mathbb{P})$. In other words,
$ L^2(\Omega, \mathcal{F}, \mathbb{P}) = \mathbb{R} \bigoplus \mathcal{H}$.
\label{onedimPRP}
\end{thm}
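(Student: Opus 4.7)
The plan is to mimic the dense-set strategy already used for the Brownian case (Öksendal) and the Poisson case (L\o kka), adapted to the single-integrator $Y^1$. Observe first that $\mathbb{R}\oplus\mathcal{H}$ is a genuine direct sum because every element of $\mathcal{H}$ is centered (stochastic integrals against the martingale $Y^1$ have zero mean) and that $\mathcal{H}$ is closed in $L^2(\Omega,\mathcal{F},\mathbb{P})$ by It\^o's isometry applied to $Y^1$. Hence proving the decomposition reduces to showing that $\mathbb{R}\oplus\mathcal{H}$ is dense, equivalently that its orthogonal complement is trivial.

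The natural dense test-class is $\mathcal{A}=\{\phi(X_{t_1},\dots,X_{t_n}) : \phi\in C^\infty_0(\mathbb{R}^n),\ n\in\mathbb{N}\}$, which can be shown to be dense in $L^2(\mathcal{F}_T,\mathbb{P})$ by the same argument as in Theorem \ref{Lokka2thm1} (pick $\{t_i\}$ dense in $[0,T]$, set $\mathcal{G}_n=\sigma(X_{t_1},\dots,X_{t_n})$, invoke martingale convergence $\mathbb{E}(g\mid\mathcal{G}_n)\to g$, and combine Doob--Dynkin with density of $C^\infty_0$ in $L^2(\mathbb{R}^n)$). It then suffices to check that every element of $\mathcal{A}$ lies in $\mathbb{R}\oplus\mathcal{H}$. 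The computation already carried out in the excerpt, namely $X_t=\mathbb{E}(X_t)+\int_0^t dY^1_s$ (so $f=1_{[0,t]}$ witnesses $X_t\in\mathbb{R}\oplus\mathcal{H}$), is the base case; one then bootstraps by applying It\^o's formula to $\phi(X_{t_1},\dots,X_{t_n})$, collecting all finite-variation pieces into a deterministic constant (the expectation) and recognising the martingale part as an integral of a predictable process against $Y^1$. Together with closedness of $\mathbb{R}\oplus\mathcal{H}$, density of $\mathcal{A}$ in this class closes the argument.

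The main obstacle, and the place where the proof has to be handled carefully, is the It\^o-formula step producing an $L^2(\mathbb{R}_+)$ integrand \emph{against $Y^1$ alone}. The derivation quoted in the excerpt, which collapses $dX_s$ to $dX^1_s$ and thence to $dY^1_s+m_1\,ds$, implicitly uses that $X$ coincides with its sum-of-jumps process $X^1$, i.e.\ that $X$ is pure-jump; for a L\'evy process with a genuine Brownian component one picks up a separate $dW_s$ term and the predictable integrand can no longer be expressed against $Y^1$ in isolation. The honest reading of Theorem \ref{onedimPRP} is therefore either under a pure-jump hypothesis on $X$, or as the $i=1$ prototype of the general PRP that uses the whole Teugels family $\{Y^i\}_{i\ge 1}$; I would flag this explicitly before deploying the density/It\^o argument above.
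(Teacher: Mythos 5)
Your strategy is genuinely different from, and more thorough than, what the paper actually does. The paper's ``proof'' of Theorem \ref{onedimPRP} consists solely of the It\^o computation $X_t=\mathbb{E}(X_t)+\int_0^t dY^1_s$, i.e.\ it verifies membership of $\mathbb{R}\oplus\mathcal{H}$ only for the linear functionals $X_t$ themselves, and then asserts the one-dimensional PRP and the density statement ``as a consequence,'' with no argument for general elements of $L^2(\mathcal{F})$. Your plan (closedness of $\mathcal{H}$ via the isometry for $Y^1$, density of smooth cylinder functionals as in Theorem \ref{Lokka2thm1}, then It\^o's formula to place each cylinder functional in $\mathbb{R}\oplus\mathcal{H}$) is the correct skeleton for an actual proof. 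One small correction: the step $\int_0^t dX_s=\int_0^t dX^1_s$ in the paper is definitional, since the Teugels convention sets $X^1:=X$ rather than the sum of first powers of jumps, so the linear computation survives a Gaussian component; the real trouble is elsewhere.

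That trouble is the obstruction you flag at the end, and it is not a side remark --- it is fatal to the theorem as literally stated, so your proof plan stalls exactly where you say it does. For nonlinear $\phi$, It\^o's formula applied to $\phi(X_{t_1},\dots,X_{t_n})$ produces the jump-compensation sum $\sum_{s}\bigl[\phi(\cdots+\Delta X_s)-\phi(\cdots)-\phi'(\cdots)\Delta X_s\bigr]$, whose expansion involves $(\Delta X_s)^i$ for $i\ge 2$, i.e.\ the higher power-jump martingales $Y^i$ and, upon iteration, multiple stochastic integrals. These cannot be absorbed into a single predictable integral against $Y^1$ unless the jump measure is degenerate: for a compound Poisson process with two distinct jump sizes, $L^2(\Omega,\mathcal{F})=\mathbb{R}\oplus\mathcal{H}$ is simply false. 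The statement holds exactly when $X$ has the predictable representation property with respect to itself (Brownian motion, standard Poisson); in general the correct assertion is the full chaotic decomposition of Theorem \ref{theorem1}, which the paper proves separately using the whole orthogonalized family $\{H^i\}$ --- implicitly conceding that the single-integrator version cannot stand on its own. Your diagnosis is right, and the paper's own argument shares the gap, since it never gets past the linear case.
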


We want now to take the result of theorem \ref{onedimPRP} further to a multi-dimensional adaptation. As in the sections above, we wish to find an integral representation for martingales in $L^2(\Omega, \mathcal{F}, \mathbb{P})$. But this time in order to do so, we need to find a spamming set of $\mathcal{M}^2$. This is achievable by constructing a set of pairwise orthogonal martingales $\{ H^i \mid i \geq 1\}$ where:

\begin{eqnarray}
H^i = Y^i + \sum^i_1 a_{i,i-j}Y^{i-j}. 
\label{polynomial1}
\end{eqnarray}

Here, orthogonality is understood as in definition \ref{strongorthog}.

How can we find the coefficients $a_{i,j}$ in equation \ref{polynomial1} to form an orthogonal polynomial set? \\
Consider:
\begin{eqnarray*}
S_1 &=& \big\{ \mbox{polynomials in } \mathbb{R} \mid \langle P(x),Q(x) \rangle = \int^{+ \infty}_{- \infty} P(x)Q(x)x^2\nu(dx) + \sigma^2 P(0)Q(0) \big\} \mbox{ and} \\
S_2 &=& \big\{ \sum^n_i a_i Y^i \mid n\in \mathbb{N}, a_i \in \mathbb{R}, \langle Y^i, Y^j \rangle = m_{i+j} + \sigma^2 1_{\{i=j=1\}} \big\} \supseteq \{ H^i \mid i \geq 1\}.
\end{eqnarray*}

Since there is a clear isometry between $S_1$ and $S_2$ , namely $x^{i-1} \longleftrightarrow Y^i$ \cite{Nualart}, we can create an orthogonal basis for $\mathcal{M}^2$ by orthogonalising $\{1, x, x^2,x^3....\}$. To this purpose, some well-known polynomials can be used, such as the Laguerre or the Hermite polynomials:
\[ H_n(x) = (-1)^n e^{\frac{x^2}{2}}\frac{d^n}{dx^n}e^{\frac{-x^2}{2}}. \]
\\
We take an interest in the following spaces:
\begin{eqnarray*}
\mathcal{H}^{(i_1,...,i_j)} &=& \Big\{ \int^{\infty}_0 \int^{t_1-}_0 ... \int^{t_{j-1}-}_0 f(t_1,...,t_j)dH^{i_j}_{t_j}...dH^{i_1}_{t_1}  \mid  f \in L^2(\mathbb{R}^j_+) \Big\} \subset L^2(\Omega). \\
\end{eqnarray*}
It is known that if $(i_1,...,i_j) \neq (j_1,...,j_l)$ then $ \mathcal{H}^{(i_1,...,i_j)} \perp \mathcal{H}^{(j_1,...,j_l)}$. \cite{Nualart} \\
These space are basically a multi-dimensional adaptation of $\mathcal{H}$ defined earlier. Using these, we can prove an extension of theorem \ref{onedimPRP} 
\begin{thm}
 $L^2(\Omega, \mathcal{F}) = \mathbb{R} \oplus \big( \bigoplus^{\infty}_{j=1} \bigoplus_{i_1...i_j \geq 1}  \mathcal{H}^{(i_1,...,i_j)} \big). \label{theorem1}$ 
\end{thm}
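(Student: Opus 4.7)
The plan is to proceed by the same dense-subset-plus-orthogonality paradigm used in sections 1.2.1 and 1.2.2, only with iterated stochastic integrals replacing single ones. One inclusion is immediate: pairwise orthogonality of the various $\mathcal{H}^{(i_1,\ldots,i_j)}$ for distinct multi-indices is already cited, and each $\mathcal{H}^{(i_1,\ldots,i_j)}$ is orthogonal to the constants $\mathbb{R}$ by the martingale property of the iterated integral, so the right-hand side embeds as a Hilbert sum inside $L^2(\Omega,\mathcal{F},\mathbb{P})$. The substance of the theorem is that this sum exhausts $L^2(\Omega,\mathcal{F},\mathbb{P})$, and I would prove this by an orthogonal-complement argument: assume $F\in L^2$ has $\mathbb{E}(F)=0$ and $F\perp\mathcal{H}^{(i_1,\ldots,i_j)}$ for every multi-index, and deduce $F\equiv 0$.

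Since $\mathcal{F}$ is generated by the family $\{X_t\}_{t\geq 0}$, a Doob-Dynkin reduction exactly analogous to Theorem~\ref{Lokka2thm1} lets me replace the orthogonality hypothesis by $\mathbb{E}[F\cdot P(X_{t_1},\ldots,X_{t_n})]=0$ for every polynomial $P$ and every finite grid $t_1<\cdots<t_n$. The mechanism producing the chaos decomposition of polynomials is It\^o's formula applied recursively. Starting from $X_t=\mathbb{E}(X_t)+\int_0^t dY^1_s$ (the one-dimensional PRP, Theorem~\ref{onedimPRP}), the It\^o formula for pure-jump semimartingales applied to $X_t^n$ yields an integral of $X_{s-}^{n-1}$ against $dY^1_s$, together with jump-correction terms $\sum_{s\leq t}(\Delta X_s)^k X_{s-}^{n-k}$ for $k\geq 2$, which compensate to linear combinations of integrals of $X_{s-}^{n-k}$ against the higher Teugels martingales $dY^k_s$, plus finite-variation drift terms. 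Iterating on the lower-degree integrands $X_{s-}^{n-k}$ terminates in finitely many steps and writes $X_t^n$, and hence by multilinearity any monomial $\prod_k X_{t_k}^{n_k}$, as a constant plus a finite sum of iterated integrals against the $Y$'s. Because $H^i = Y^i + \sum_{\ell<i}a_{i,\ell}Y^\ell$ is triangular and invertible, the same decomposition holds with the $H$'s in place of the $Y$'s, placing every such polynomial in $\mathbb{R}\oplus\bigoplus_{j,\mathbf{i}}\mathcal{H}^{(i_1,\ldots,i_j)}$.

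The main obstacle is twofold. First, one needs polynomials in $(X_{t_1},\ldots,X_{t_n})$ to be dense in $L^2$, which requires a standing moment assumption of Nualart--Schoutens type on the L\'evy measure $\nu$ (typically $\int_{|x|>1}e^{\lambda|x|}\nu(dx)<\infty$ for some $\lambda>0$), so that the joint law of any finite tuple $X_{t_1},\ldots,X_{t_n}$ is moment-determinate; this step replaces the Fourier inversion used by \O ksendal in Theorem~1.1. Second, bookkeeping in the recursive It\^o expansion is delicate: each application of It\^o both lowers the polynomial degree of the leading integrand and spawns jump-correction terms living in spaces $\mathcal{H}^{(i_1,\ldots,i_j)}$ of strictly higher multi-index length $j$, so one needs a double induction on the polynomial degree and on $j$ to show that the procedure terminates and that every resulting summand is absorbed into the right-hand side. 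Once those two ingredients are in place, the orthogonality already established furnishes the Hilbert-sum decomposition and the theorem follows.
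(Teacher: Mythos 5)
Your proposal follows essentially the same route as the paper's proof: density in $L^2(\Omega,\mathcal{F})$ of polynomials in the increments of $X$ (established by the same orthogonal-complement and Cauchy--Schwarz argument), representation of powers of increments as finite sums of iterated integrals against the orthogonalized Teugels martingales $H^i$, and then assembly into the chaotic decomposition. The only differences are ones of detail rather than of approach: you sketch the recursive It\^o-formula derivation of the key lemma expressing $(X_{t_0+t}-X_{t_0})^k$ as iterated integrals, and you make explicit the exponential-moment hypothesis on the L\'evy measure needed for the polynomial-density step, both of which the paper simply imports from Nualart--Schoutens.
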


\begin{proof} To show this we start by noting that:
\begin{eqnarray*}
\mathcal{P} = \big\{ X^{k_1}_{t_1} \prod^n_2 (X_{t_i}-X_{t_{i-1}})^{k_i} \mid n \geq 0, k_i \geq 1 \big\}
\end{eqnarray*}
is dense in $L^2(\Omega, \mathcal{F})$. Indeed, take $ Z \in L^2(\Omega, \mathcal{F})$, $Z \perp \mathcal{P}$. For some finite set $\{0 <s_1<...<s_m\}$ there is $Z_{\epsilon} \in L^2(\Omega, \omega(X_{s_1},...,X_{s_m}))$ s.a.: 
\[
\mathbb{E}\big[ (Z-Z_{\epsilon})^2\big] < \epsilon.
\]
There exists a Borel function where: $ Z_{\epsilon} = f_{\epsilon}(X_{s_1},X_{s_2}-X_{s_1},...,X_{s_m}-X_{s_{m-1}})$, which can be approximated by polynomials. Additionally, $\mathbb{E}[ZZ_{\epsilon}] = 0$. Then:

\begin{eqnarray*}
 \mathbb{E}[Z^2] = \mathbb{E}[Z(Z-Z_{\epsilon})] \leq \sqrt{ \mathbb{E}[Z^2]\mathbb{E}[(Z-Z_{\epsilon})^2] } \leq \sqrt{ \epsilon\mathbb{E}[Z^2],} 
\end{eqnarray*}
so Z = 0 a.s. as $ \epsilon \rightarrow 0$.
If we can represent the terms of $\mathcal{P}$ we can thus represent $ L^2(\Omega, \mathcal{F})$. For this, we rely on the following lemma:

\begin{eqnarray}
(X_{t_0+t} - X_{t_0})^k  &=& \mathbb{E}[(X_{t_0+t} - X_{t_0})^k] \nonumber \\
&&+ \sum^k_{j=1} \sum_{(i_1,...,i_j)} \int^{t_0+t}_{t_0} \int^{t_1-}_{t_0}...\int^{t_{j-1}-}_{t_0} h^k_{(i_1,...,i_j)}(t,t_0,...,t_j)dH^{i_j}_{t_j}...dH^{i_1}_{t_1}
\label{lemma1}
\end{eqnarray}

where $h^k_{(i_1,...,i_j)} \in L^2(\mathbb{R}^j_+)$. \cite{Nualart}\\
For any $0 \leq t < s \leq u <v$, k,l $\geq$1, we have that $(X_s-X_t)^k(X_v-X_u)^l = AB$ where A and B are of the form stated in lemma \ref{lemma1}.Then:

\begin{eqnarray*}
AB = \int^{\infty}_0 \int^{u_1-}_0 ... \int^{u_m-}_0  \int^{t_1-}_0 ... \int^{t_{n-1}-}_0 \prod_{i=u,u_1...t_n} 1(i) \\
\times h^l_{(j_1,...,j_m)}(v,u,u_1,...,u_m)h^k_{(i_1,...,i_n)}(s,t,t_1,...,t_n) \\
\times dH^{i_n}_{t_n}...dH^{i_1}_{t_1}dH^{j_m}_{u_m}...dH^{j_1}_{u_1}.
\end{eqnarray*}

This then gives the {\bf Chaotic representation property (CRP)}: $\forall F \in L^2(\Omega, \mathcal{F})$,
\begin{equation}
F = \mathbb{E}[F] + \sum^{\infty}_j \sum_{i_1,..,i_j \geq1} \int^{\infty}_0 \int^{t_1-}_{t_0}...\int^{t_{j-1}-}_{t_0} f_{(i_1,...,i_j)}(t_1,...,t_j)dH^{i_j}_{t_j}...dH^{i_1}_{t_1}
\label{CRP}
\end{equation}

where $ f_{(i_1,...,i_j)} \in L^2(\mathbb{R}^j_+)$.
Theorem 2 is then a consequence of the CRP \ref{CRP}.   
\end{proof}

To attain a representation result for martingales, we observe that the CRP (equation \ref{CRP}) can be transformed in the following way: $\forall F \in L^2(\Omega, \mathcal{F})$,

\begin{eqnarray*}
F - \mathbb{E}(F) &=& \sum^{\infty}_j \sum_{i_1,..,i_j \geq1} \int^{\infty}_0 \int^{t_1-}_{t_0}...\int^{t_{j-1}-}_{t_0} f_{(i_1,...,i_j)}(t_1,...,t_j)dH^{i_j}_{t_j}...dH^{i_1}_{t_1} \\
&=& \sum^{\infty}_{i_1=1} \int^{\infty}_0 f_{i_1}(t_1) dH^{i_1}_{t_1}  +  \sum^{\infty}_{i_1=1} \int^{\infty}_0 \Big[ \sum^{\infty}_{j=2} \sum_{i_2,..,i_j \geq1} \int^{t_1-}_0 \cdots \\
&& \int^{t_{j-1}-}_0 f_{(i_1,...,i_j)}(t_1,...,t_j) dH^{i_j}_{t_j}...dH^{i_2}_{t_2} \Big] dH^{i_1}_{t_1} \\
&=&  \sum^{\infty}_{i_1=1} \int^{\infty}_0 \Big[ f_{i_1}(t_1) + \sum^{\infty}_{j=2} \sum_{i_2,..,i_j \geq1} \int^{t_1-}_0 \cdots  \int^{t_{j-1}-}_0 f_{(i_1,...,i_j)}(t_1,...,t_j) dH^{i_j}_{t_j}...dH^{i_2}_{t_2} \Big] dH^{i_1}_{t_1} \\
&=& \sum^{\infty}_{i=1} \int^{\infty}_0 \phi^i_{t_1}dH^i_{t_1},
\end{eqnarray*}

where $\forall i $
\[ \phi^i_{t_1} =  f_{i_1}(t_1) + \sum^{\infty}_{j=2} \sum_{i_2,..,i_j \geq1} \int^{t_1-}_0 \cdots  \int^{t_{j-1}-}_0 f_{(i_1,...,i_j)}(t_1,...,t_j) dH^{i_j}_{t_j}...dH^{i_2}_{t_2} \]
and $\phi^i_t$ is predictable. Hence the following result:

\begin{defn}[{\bf Predictable Representation property (PRP)}]
 $\forall F \in L^2(\Omega, \mathcal{F})$, there is a $\phi^i_t$ predictable such as:

\begin{equation} F = \mathbb{E}(F) + \sum^{\infty}_{i=1} \int^{\infty}_0 \phi^i_s dH^i_s. \label{PRP} \end{equation}

Then $\forall M \in \mathcal{M}^2$ with $M_{\infty} \in L^2(\Omega,\mathcal{F})$ and $M_t = \mathbb{E}(M_{\infty} \mid \mathcal{F}_t) $ $\forall t$, The PRP (equation \ref{PRP}) gives us:
\[ M_t = \sum^{\infty}_{i=1} \int^{t}_0 \phi^i_s dH^i_s. \]

\end{defn}

While this result is very similar to the conventional Brownian-motion based MRT, it presents the advantage of being more general and reaching out to wider sets and more elaborate martingales.  Indeed, as we can see through the following:
\begin{thm}[{\bf L$\acute{e}$vy-Ito theorem}]
 $ \forall (X_t)$ L$\acute{e}$vy processes on $(\Omega, \mathcal{F}, \mathcal{F}_t, \mathbb{P})$ - where the distribution of $X_1$ is parametrized by $(\beta, \sigma^2, \upsilon)$ in the L$e$vy-Khintchine theorem - X decomposes as follow \cite{Applebaum} :

\begin{equation}
X_t = \beta t+ \sigma W_t + J_t + M_t,
\label{levyIto}
\end{equation}

where
$\beta \in \mathbb{R}$, $\sigma^2 \geq 0$ and $\upsilon$ is a measure on $\mathbb{R}/\{0\}$ such as $\int_{\mathbb{R}/\{0\}} 1 \wedge x^2 \upsilon(dx) < \infty$, and
\begin{itemize}
  \item $W_t$ is a Brownian Motion,
  \item $\Delta X_t = X_t - X_{t-}$ for t $\geq$ 0 is an indep. Poisson point process with intensity $\upsilon$,
  \item $ J_t = \sum_{s \leq t} \Delta X_s 1_{\{ |\Delta X_s| > 1\}} $, and
  \item $M_t$ is a martingale with jumps: $\Delta M_t = \Delta X_t 1_{\{ |\Delta X_t| > 1\}}$.
\end{itemize} 
\end{thm}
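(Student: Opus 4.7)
The plan is to build the decomposition jump-by-jump from the Lévy–Khintchine characterization and let the Poisson structure of the jumps of $X$ force the form \ref{levyIto}. First I would introduce the jump random measure
\[ N((0,t],A) = \#\{s \in (0,t] : \Delta X_s \in A\}, \qquad A \in \mathcal{B}(\mathbb{R}\setminus\{0\}), \]
and show that stationarity and independence of increments of $X$, combined with the fact that $\Delta X_s \neq 0$ only on a countable set (by càdlàg), imply that $N$ is a Poisson random measure on $(0,\infty)\times(\mathbb{R}\setminus\{0\})$ with intensity $dt\otimes\upsilon(dx)$, where $\upsilon$ is precisely the Lévy measure appearing in the Lévy–Khintchine triple $(\beta,\sigma^2,\upsilon)$. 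The key ingredient here is to compute $\mathbb{E}[e^{i\theta N((0,t],A)}]$ from the characteristic function of $X$ restricted to test sets $A$ bounded away from $0$, and recognize a Poisson law.

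Next I would split the jumps at the threshold $1$. Since $\upsilon(\{|x|>1\}) < \infty$, only finitely many big jumps occur on any compact time interval, so
\[ J_t := \sum_{s\leq t} \Delta X_s \,1_{\{|\Delta X_s|>1\}} = \int_{(0,t]\times\{|x|>1\}} x\, N(ds,dx) \]
is a well-defined compound Poisson process, independent of the restriction of $N$ to small jumps (by the independence property of Poisson measures on disjoint sets). For the small jumps, the sum $\sum_{s\leq t}\Delta X_s 1_{\{|\Delta X_s|\leq 1\}}$ need not converge absolutely, so I would define the compensated integral
\[ M_t := \int_{(0,t]\times\{0<|x|\leq 1\}} x\,(N-\hat N)(ds,dx), \qquad \hat N(ds,dx)=ds\,\upsilon(dx), \]
truncating first at $\varepsilon<|x|\leq 1$ and passing $\varepsilon\downarrow 0$ using the $L^2$ isometry together with $\int_{|x|\leq 1} x^2\,\upsilon(dx)<\infty$, which is guaranteed by $\int 1\wedge x^2\,\upsilon(dx)<\infty$. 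This yields a square-integrable càdlàg martingale $M_t$ whose jumps are exactly the small jumps of $X$, and which is independent of $J_t$.

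Finally I would set $C_t := X_t - J_t - M_t$ and observe that, by construction, all jumps of $X$ have been subtracted, so $C_t$ is continuous; it remains a Lévy process because $J$ and $M$ are. Computing its characteristic function via Lévy–Khintchine applied to $X$, $J$, and $M$ separately, and using independence of $J$ with $(C,M)$, I obtain
\[ \mathbb{E}[e^{i\theta C_t}] = \exp\!\Bigl( t\bigl(i\theta\beta - \tfrac{1}{2}\sigma^2\theta^2\bigr) \Bigr), \]
so $C_t$ is a continuous Lévy process with Gaussian increments of variance $\sigma^2 t$ and drift $\beta t$; hence $C_t = \beta t + \sigma W_t$ for a Brownian motion $W$, completing the decomposition.

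The main obstacle is the small-jump part: showing that the compensated sum $M_t$ converges (and not merely along the truncation sequence), that the limit is a martingale independent of the large-jump part, and that subtracting $J_t+M_t$ really leaves a \emph{continuous} process rather than one with a residual singular compensator. This is handled by carefully exploiting the $L^2$ isometry for Poisson stochastic integrals (cf.\ Theorem \ref{Poissonitoisometry}) together with the independence of a Poisson random measure on disjoint Borel sets, which gives both convergence and the needed independence in one stroke.
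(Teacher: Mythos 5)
The paper does not prove this theorem at all: it is stated as a quoted result, attributed to Applebaum \cite{Applebaum}, and immediately re-expressed in the form \ref{levyito-lokka}; so there is no in-paper argument to compare yours against. Your proposal is the standard L\'evy--It\^o argument (essentially the one in the cited source): build the jump measure $N$, identify it as a Poisson random measure with intensity $dt\otimes\upsilon$, peel off the large jumps as a compound Poisson process $J$, obtain the compensated small-jump martingale $M$ as an $L^2$ limit of truncations, and show the remainder is a continuous L\'evy process, hence $\beta t+\sigma W_t$. As an outline this is sound, and you correctly read $M_t$ as the \emph{small}-jump part (the paper's bullet $\Delta M_t=\Delta X_t 1_{\{|\Delta X_t|>1\}}$ is evidently a typo for $|\Delta X_t|\leq 1$).

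Two steps deserve more care than you give them. First, identifying the intensity of $N$ with the L\'evy--Khintchine measure $\upsilon$ is not naturally done by ``restricting the characteristic function of $X$ to test sets $A$''; the cleaner route is to show directly that $t\mapsto N((0,t],A)$ is an integer-valued L\'evy process without fixed discontinuities (hence Poisson), and only at the end match the characteristic exponent of the reassembled process against the L\'evy--Khintchine formula, invoking uniqueness of the triple to conclude that the intensity is $\upsilon$ and the residual Gaussian parameters are $(\beta,\sigma^2)$. Second, your computation of $\mathbb{E}[e^{i\theta C_t}]$ invokes ``independence of $J$ with $(C,M)$'', but the disjoint-support property of Poisson random measures only gives independence among objects built \emph{from} $N$ ($J$ and $M$); the continuous remainder $C$ is not such an object, and its independence from the jump part is a separate lemma (usually proved via an exponential-martingale computation showing the joint characteristic function factorizes). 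Without that lemma, the factorization you use to extract $\exp(t(i\theta\beta-\tfrac12\sigma^2\theta^2))$ is asserted rather than derived. Both points are standard and fixable, so the proposal is a correct skeleton, but these are the two places where the real work lives.
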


The result \ref{levyIto} can be re-written as done in \cite{Lokka}:

\begin{equation}
L_t = \beta t + \sigma W_t + \int^t_0 \int_{\mathbb{R}_0} z (\mu - \upsilon)(t,dz)
\label{levyito-lokka}
\end{equation}
 where $\mu$ is a Poisson random measure, $\mathbb{R}_0 = \mathbb{R} / \{0\}$, and $\beta, \sigma$ are defined as in the theorem above. 

\begin{thm}[{\bf lemma 12} \cite{Lokka}]
$\exists \{ \Lambda_n \}^{\infty}_{n=1}$ partitioning $\mathbb{R}_0$ and $z_n \in \Lambda_n \subseteq \mathbb{R}$ such as
\[ \int_{\mathbb{R}_0} z(\mu-\upsilon)(t,dz) = \sum^{\infty}_{n=1} z_n(\mu-\upsilon)(t,\Lambda_n) \]
where the processes $(\mu-\upsilon)(t,\Lambda_n)$ are all compensated Poisson processes with intensity $\upsilon(\Lambda_n)$. Hence, 
\begin{equation}
\int^t_0 \int_{\mathbb{R}_0} z (\mu - \upsilon)(t,dz) = \int^t_0 \int_{\mathbb{R}_0} z \tilde{N}(ds,dz).
\label{lemma-lokka}
\end{equation}
\end{thm}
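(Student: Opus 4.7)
The plan is to exhibit the partition by discretising $\mathbb{R}_0$ along the natural scale imposed by $\upsilon$, pick representative jump sizes $z_n \in \Lambda_n$, and invoke the It\^o isometry for Poisson random measures (Theorem \ref{Poissonitoisometry}) to identify the sum with the integral in $L^2(\mathbb{P})$. The whole argument is essentially an approximation of the identity function $z \mapsto z$ by step functions adapted to $\upsilon$, controlled in $L^2(\upsilon)$.

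First I would construct the partition $\{\Lambda_n\}_{n\geq 1}$ as a countable Borel partition of $\mathbb{R}_0$ with $\upsilon(\Lambda_n)<\infty$ for every $n$ and diameters $\sup_{z\in\Lambda_n}|z-z_n|$ that can be driven to zero on successively refined partitions. A concrete choice is a nested dyadic family: excise a shrinking neighbourhood of the origin and dyadically partition the annular remainder; this is permissible because the L\'evy condition $\int_{\mathbb{R}_0}(1\wedge z^2)\upsilon(dz)<\infty$ makes $\upsilon$ $\sigma$-finite on $\mathbb{R}_0$. For each such $\Lambda_n$ the counting process $N_n(t):=\mu(t,\Lambda_n)$ is, by definition of a Poisson random measure, a Poisson process with intensity $\upsilon(\Lambda_n)$; disjointness of the $\Lambda_n$ yields mutual independence, and $(\mu-\upsilon)(t,\Lambda_n)=N_n(t)-\upsilon(\Lambda_n)t$ is the associated compensated Poisson martingale.

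Then I would select $z_n\in\Lambda_n$, form the simple integrand $\phi_N(z)=\sum_{n=1}^{N} z_n 1_{\Lambda_n}(z)$, and observe that
\[
\sum_{n=1}^{N} z_n(\mu-\upsilon)(t,\Lambda_n) \;=\; \int_{\mathbb{R}_0}\phi_N(z)(\mu-\upsilon)(t,dz).
\]
Applying the It\^o isometry from Theorem \ref{Poissonitoisometry} to the remainder gives
\[
\mathbb{E}\Bigl[\Bigl(\int_{\mathbb{R}_0}(z-\phi_N(z))(\mu-\upsilon)(t,dz)\Bigr)^2\Bigr]=t\int_{\mathbb{R}_0}(z-\phi_N(z))^2\upsilon(dz),
\]
and by choosing the refinement so that $\|z-\phi_N\|_{L^2(\upsilon)}\to 0$ the partial sums converge in $L^2(\mathbb{P})$ to $\int_{\mathbb{R}_0} z(\mu-\upsilon)(t,dz)$, giving the claimed identity. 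Rewriting each $(\mu-\upsilon)(t,\Lambda_n)=\int_0^t\tilde N(ds,\Lambda_n)$ then translates the display into the notation on the right-hand side of \ref{lemma-lokka}.

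The main obstacle is entirely the behaviour near $z=0$: the L\'evy measure may have infinite mass on every neighbourhood of the origin, so the partition must necessarily become very fine there and one cannot naively bound $\|z-\phi_N\|_{L^2(\upsilon)}$ by $\mathrm{diam}(\Lambda_n)^2\,\upsilon(\Lambda_n)$. The saving grace is the L\'evy integrability $\int_{|z|\leq 1}z^2\upsilon(dz)<\infty$ (together with $\upsilon(\{|z|>1\})<\infty$ for the tails), which, combined with placing $z_n$ at the centre of each shell, forces $\int(z-\phi_N)^2\upsilon(dz)\to 0$ via dominated convergence. A secondary point is ensuring that $\int z(\mu-\upsilon)(t,dz)$ is itself a well-defined $L^2$ martingale under the standing hypotheses of the section --- essentially conditions \ref{conditionsLokka2} --- so that the $L^2$-limit of the partial sums is unambiguously identified with it.
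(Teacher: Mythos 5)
The paper itself offers no proof of this statement: it is transcribed as Lemma 12 of L\o{}kka \cite{Lokka} and used immediately afterwards, so there is no in-text argument to compare yours against. Your proposal is the natural way to supply the missing proof, and most of it is sound: the dyadic-annulus partition does give $\upsilon(\Lambda_n)<\infty$ for every $n$ (the L\'evy condition $\int(1\wedge z^2)\upsilon(dz)<\infty$ bounds the mass of any shell bounded away from the origin), disjointness of the $\Lambda_n$ does make the $(\mu-\upsilon)(\cdot,\Lambda_n)$ independent compensated Poisson processes with intensities $\upsilon(\Lambda_n)$, and the isometry of Theorem \ref{Poissonitoisometry} applied to $z-\phi_N$ is exactly the right tool; the standing square-integrability of $L$ in this section gives $z\in L^2(\upsilon)$, so your dominated-convergence argument does close both the origin and the tail.

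The one genuine gap is the mismatch between what your argument proves and what the displayed identity asserts. For a \emph{fixed} countable partition and fixed representatives $z_n$, the series $\sum_n z_n(\mu-\upsilon)(t,\Lambda_n)$ converges in $L^2(\mathbb{P})$ to $\int_{\mathbb{R}_0}\phi(z)(\mu-\upsilon)(t,dz)$ with $\phi=\sum_n z_n 1_{\Lambda_n}$, and by the very isometry you invoke this equals $\int_{\mathbb{R}_0}z(\mu-\upsilon)(t,dz)$ only if $\phi=z$ $\upsilon$-a.e., i.e.\ essentially only when $\upsilon$ is purely atomic with atoms at the $z_n$. Your step ``by choosing the refinement so that $\|z-\phi_N\|_{L^2(\upsilon)}\to 0$ the partial sums converge'' silently replaces the partial sums of one fixed series by integrals over a \emph{sequence} of successively refined partitions; those are different limits, and no single countable partition realises the exact equality in general. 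The fix is to state (and prove) the lemma as an $L^2(\mathbb{P})$-limit along a refining sequence of partitions, which is what your construction actually delivers and is all that is needed to justify \ref{lemma-lokka}; that final display is then just the change of notation $\tilde N=\mu-\upsilon$ and requires no further work.
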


Then we can apply the PRP, result obtained in equation \ref{PRP}: $\forall m \in \mathcal{M}^2$,
\begin{eqnarray*}
m_t &=& \sum^{\infty}_{i=1} \int^{t}_0 \phi^i_s dH^i_s \\
&=& \sum^{\infty}_{i=1} \int^{t}_0 \phi^i_s (\sigma dW^i_s + d( \int^s_0 \int_{\mathbb{R}_0} z (\mu - \upsilon)^i(s,dz))) \mbox{ , since $H^i$ is a l$\acute{e}$vy process result \ref{levyito-lokka} applies,} \\
&=& \sum^{\infty}_{i=1} \int^{t}_0 \hat{\phi}^i_s  dW^i_s +  \int^t_0 \psi^i(s,z) (\mu - \upsilon)^i(s,dz) \\
&=& \sum^{\infty}_{i=1} \int^{t}_0 \hat{\phi}^i_s  dW^i_s +  \int^t_0 \psi^i(s,z) \tilde{N}^i(ds,dz) \mbox{- using result \ref{lemma-lokka},}
\end{eqnarray*}
which is similar to a result developed in \cite{Lokka}:\\
$\forall m \in \mathcal{M}^2$, and where ($m_t$) is of dimension n, $\exists \hat{\phi}(t), \psi(t,z)$ predictable n-dimensional processes such as
\begin{equation}
m_t = \int^{t}_0 \hat{\phi}_s  dW_s +  \int^t_0 \psi(s,z) \tilde{N}(ds,dz),
\label{wiener-poissonmrt}
\end{equation}
 where $W_t$, $\tilde{N}(t,z) $ are also n-dimensional and 
\begin{eqnarray*}
\mathbb{E}(\int^T_0 \hat{\phi}^2_s ds) < \infty ,\\
\mathbb{E}(  \int^T_0 \psi(s,z)^2  \upsilon(dz) ds) < \infty.
\end{eqnarray*}

\section{The Clark-Ocone formula and explicit representation of the integrand}

We have explored in the previous chapter the essentials of the Martingale Representation Theorem in its various forms, notably within the continuous space driven by the Brownian motion and beyond with L$\acute{e}$vy and jump processes. It is generally agreed that such a representation formula exists in $\mathcal{M}^2$ because of the Hilbert structure of the probability spaces these processes live in. More specifically, in each of these $L^2(\Omega, \mathcal{F}, (\mathcal{F})_{t \geq 1})$,there is a set of integrals of fundamental processes $\mathcal{H}$ such as $\mathcal{H}$ is dense in  $\mathcal{M}^2 \subseteq L^2$. and:

\[ \forall X_t \in \mathcal{M}^2, X_t = \int^t_0 \phi_s dq_s \mbox{, } q_t \in \mathcal{H}. \]

While this gives a general formula to martingales, it gives no indication as to what $\phi_t$ is. However knowing the form of the integrand $\phi_t$  is of central importance in the applications of the MRT, notably in the representation of portfolio dynamics and trading strategy optimization.\\
\\
To explore this question, we first introduce basics of Malliavin calculus relevant to the representation of the integrand $\phi_t$. We then cover an important result providing a formula for the integrand, the Clark-Ocone Formula.

\subsection{Malliavin Calculus}

Malliavin calculus is the extension of the calculus of variations from functions to stochastic processes over a finite or infinite dimensional space. To develop on this topic, we follow the work of Li (2011) \cite{Li} and $\O$ksendal (1997) \cite{Oksendal2}. \\
We work on $L^2(\Omega, \mathcal{F}_t, \mathbb{P})$, where $\mathcal{F}_t$ is the $\sigma$-algebra generated by a Brownian Motion $W_t$, $t \in [0,T]$, and $\Omega = C_0([0,T])$. Here, $\mathcal{F}= \{\mathcal{F}_t \mid t \in [0,T]\}$ is the initial filtration augmented by $\mathbb{P}$-zero measure sets. \\

\begin{defn}
a function $g: [0,T]^n \rightarrow \mathbb{R}$ is symmetric if
$ g(t_{\sigma_1},...,t_{\sigma_n}) =  g(t_1,...,t_n)$
for any permutation $\sigma=(\sigma_1,...,\sigma_n)$ of (1,..,n).
\end{defn}

Let:
\[ \tilde{L}^2([0,T]^n) = \{  g: [0,T]^n \rightarrow \mathbb{R} \mid \mbox{symmetric square integrable functions}\}  \subset L^2([0,T]^n) \mbox{, and} \]
\[ S_n = \{(t_1,..,t_n) \in [0,T]^n \mid t_i \leq t_j, \forall i \leq j \}. \]

\begin{defn}[{\bf the n-fold interated It$\hat{o}$ integral:}]
We define the n-fold iterated It$\hat{o}$ integral where f is a deterministic function defined on $S_n$ as:
\[ J_n(f) = \int^t_0\int^{t_n}_0 ... \int^{t_2}_0 f(t_1,...,t_n) dW_{t_1}...dW_{t_n}, \]
and note that $J_n(f) \in L^2(\Omega)$.
\end{defn}

\begin{defn}
for $g \in \tilde{L}^2([0,T]^n)$, set
\[I_n(g) = \int_{[0,T]^n} g(t_1,...,t_n) dW_{t_1}...dW_{t_n}=n!J_n(g). \]
\label{defInJn}
\end{defn}

\begin{defn}[{\bf n-th Wiener Chaos} \cite{Peccati}]
The n-th Wiener Chaos $C_n$ is defined as:
\[ C_n = \{ I_n(f) \mid f \in L^2(\mathbb{P}) \} \mbox{, } n \geq 1.\]
\end{defn}

The operators $I_n$ and $J_n$ have a couple of useful properties, notably the following:
\begin{thm}
 $\forall f_n \in \tilde{L}^2([0,T]^n)$ with G being a borel set $\subseteq [0,T]$,
\begin{equation}
\mathbb{E}(I_n(f_n)) \mid \mathcal{F}_G) = I_n(f_n \prod^n_{i=1} 1_{G}(t_i)),
\label{condiexpI}
\end{equation}
where $\mathcal{F}_G$ is a completed $\sigma$-field:
\[  \mathcal{F}_G = \sigma \{ \int^T_0 1_{A}(t)dW_t \mid A- \mbox{borel sets}, A \subseteq G \}. \]
\end{thm}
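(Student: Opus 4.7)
The plan is to establish the identity first on a simple dense class of kernels and then extend by $L^2$-continuity. Specifically, I would start with symmetrized tensor products of indicators of pairwise disjoint Borel subsets of $[0,T]$, show the identity there by a direct computation using Gaussian independence, and then invoke the isometry $\|I_n(f)\|_{L^2(\Omega)}^2 = n!\,\|f\|_{L^2([0,T]^n)}^2$ together with continuity of conditional expectation on $L^2(\Omega)$.

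\textbf{Step 1: reduction to elementary kernels.} Let $A_1,\dots,A_n$ be pairwise disjoint Borel subsets of $[0,T]$ and let $f_n$ be the symmetrization of $\mathbf 1_{A_1}\otimes\cdots\otimes\mathbf 1_{A_n}$. Directly from Definition \ref{defInJn}, $I_n(f_n) = \prod_{i=1}^n W(A_i)$, where $W(A):=\int_0^T \mathbf 1_A(t)\,dW_t$. Write each factor as $W(A_i) = W(A_i\cap G) + W(A_i\cap G^c)$. Since $A_i\cap G \subseteq G$, the first summand is $\mathcal{F}_G$-measurable. Since $A_i\cap G^c\subseteq G^c$, and since $\mathcal{F}_G$ is generated by Wiener integrals supported in $G$, the second summand is orthogonal in $L^2$ to every generator of $\mathcal{F}_G$; by Gaussianity this gives independence of the entire sigma-field $\mathcal{F}_G$.

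\textbf{Step 2: expansion and vanishing of cross terms.} Expanding the product $\prod_{i=1}^n \bigl(W(A_i\cap G)+W(A_i\cap G^c)\bigr)$ yields $2^n$ summands indexed by subsets $S\subseteq\{1,\dots,n\}$, each of the form
\[
\Bigl(\prod_{i\notin S} W(A_i\cap G)\Bigr)\Bigl(\prod_{j\in S} W(A_j\cap G^c)\Bigr).
\]
The first factor is $\mathcal{F}_G$-measurable, while the second factor is a product of independent mean-zero Gaussians (the sets $A_j\cap G^c$ are pairwise disjoint) and is independent of $\mathcal{F}_G$. Consequently $\mathbb{E}[\,\cdot\,\mid \mathcal{F}_G]$ annihilates every term with $S\neq\emptyset$, leaving precisely $\prod_i W(A_i\cap G) = I_n\bigl(f_n\prod_{i=1}^n \mathbf 1_G(t_i)\bigr)$. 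This proves the identity on elementary kernels.

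\textbf{Step 3: density and continuity.} Finite linear combinations of symmetrized disjoint-indicator tensor products are dense in $\tilde L^2([0,T]^n)$ by a standard monotone-class / piecewise-constant approximation argument on the product simplex. Both sides of the claimed identity are linear and $L^2$-continuous in $f_n$: on the right, via the isometry $\|I_n(g)\|_{L^2}^2 = n!\,\|g\|_{L^2}^2$ applied to $g = f_n\prod_i \mathbf 1_G(t_i)$, whose $\tilde L^2$-norm is bounded by that of $f_n$; on the left, because conditional expectation is an orthogonal projection on $L^2(\Omega)$. Passing to the limit along approximating sequences therefore extends the identity to all $f_n\in\tilde L^2([0,T]^n)$.

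\textbf{Main obstacle.} The only step that requires genuine care is the vanishing of cross terms in Step 2, specifically justifying that $W(A_j\cap G^c)$ really is independent of the full sigma-field $\mathcal{F}_G$ (not merely uncorrelated with its generators). This uses the Gaussianity of $W$ in an essential way: any element of $\mathcal{F}_G$ lies in the closed linear span of $L^2$-limits of Gaussian integrals supported in $G$, and orthogonality between two Gaussian families implies joint independence. Once this is set up cleanly, the rest is book-keeping.
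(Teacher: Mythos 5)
Your argument is correct, but there is nothing in the paper to measure it against: the paper states result \ref{condiexpI} without any proof, deferring to the lecture notes of \cite{Oksendal2}. Your route --- reduce to symmetrized tensor products of indicators of pairwise disjoint Borel sets, where $I_n(f_n)=\prod_{i}W(A_i)$ by Definition \ref{defInJn}, split each factor as $W(A_i\cap G)+W(A_i\cap G^c)$, annihilate every cross term using independence of the two Gaussian families, and close up via the isometry $\|I_n(g)\|_{L^2(\Omega)}^2=n!\,\|g\|_{L^2([0,T]^n)}^2$ and the $L^2$-contractivity of conditional expectation --- is a complete and standard proof; note also that $f_n\prod_i 1_G(t_i)$ is indeed the symmetrization of $\otimes_i 1_{A_i\cap G}$ because $\prod_i 1_G(t_i)$ is itself symmetric, so the surviving term really is the right-hand side of \ref{condiexpI}. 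You are right that the one delicate point is upgrading orthogonality of the $W(A_j\cap G^c)$ to the generators of $\mathcal{F}_G$ into independence of the whole $\sigma$-field: since all variables involved lie in a single Gaussian family, vanishing covariances give joint independence of the two sub-families, and passing to the $\mathbb{P}$-completion of $\mathcal{F}_G$ changes nothing since conditional expectations with respect to a $\sigma$-field and its completion agree almost surely. The textbook alternative (the one implicit in the cited source) argues instead by induction on $n$ through $I_n(f_n)=n!\,J_n(f_n)$, using the one-dimensional identity $\mathbb{E}\big(\int_0^T v(s)\,dW_s\mid\mathcal{F}_G\big)=\int_0^T\mathbb{E}\big(v(s)\mid\mathcal{F}_G\big)1_G(s)\,dW_s$ for adapted integrands; that version transfers more readily to the Poisson and L\'evy settings used later in the paper, whereas yours makes the role of Gaussianity completely explicit and avoids the iterated-integral bookkeeping.
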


\begin{thm}[{\bf Wiener-It$\hat{o}$ Chaos Expansion \cite{Oksendal2}}]
$F \in L^2(\Omega)$. Then there is a unique sequence $(f_n)_n$ of deterministic functions $f_n \in \tilde{L}^2([0,T]^n)$ such as:
\begin{equation}
F = \sum^{\infty}_{n=0} I_n(f_n) = \mathbb{E}(F) + \sum^{\infty}_{n=1} I_n(f_n),
\label{wienerchaosexp}
\end{equation}
and 
\[ ||F||^2_{L^2(\Omega)} = \sum^{\infty}_{n=0} n! ||f_n||^2_{L^2([0,T]^n)} < \infty.\]
\end{thm}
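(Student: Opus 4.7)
The plan is to establish the theorem in three stages: first verify that the iterated integrals $I_n$ form an orthogonal family in $L^2(\Omega)$ with the norms predicted by the statement; second, show that their closed linear span is all of $L^2(\Omega)$ by expanding a dense family of exponential random variables into iterated integrals; and finally read off uniqueness from orthogonality together with Parseval.

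First I would verify that for $f_n \in \tilde{L}^2([0,T]^n)$ and $g_m \in \tilde{L}^2([0,T]^m)$ one has $\mathbb{E}(I_n(f_n) I_m(g_m)) = \delta_{nm}\, n!\, \langle f_n, g_m\rangle_{L^2([0,T]^n)}$. This follows by repeated application of It\^o's isometry to $J_n = I_n/n!$ on the simplex $S_n$ (using Definition \ref{defInJn}), together with the fact that each innermost integration against $dW$ annihilates cross terms of different chaos order. In particular this gives the norm identity $\mathbb{E}(I_n(f_n)^2) = n! \|f_n\|^2_{L^2([0,T]^n)}$ and shows that the Wiener chaoses $C_n$ are pairwise orthogonal subspaces of $L^2(\Omega)$.

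Second, I would invoke the density result already proved in \O ksendal's argument of section 1.2.1: the family $\Psi$ of stochastic exponentials $\mathcal{E}(h)_T = \exp(\int_0^T h(s)\, dW_s - \tfrac{1}{2}\int_0^T h^2(s)\, ds)$, $h \in L^2([0,T])$, is dense in $L^2(\mathcal{F}_T, \mathbb{P})$. It\^o's formula gives $\mathcal{E}(h)_t = 1 + \int_0^t \mathcal{E}(h)_s h(s)\, dW_s$. Substituting this identity back into its own integrand $N$ times produces a Picard-type expansion $\mathcal{E}(h)_T = \sum_{n=0}^N J_n(h^{\otimes n}) + R_N$, where $h^{\otimes n}(t_1,\dots,t_n) = h(t_1)\cdots h(t_n)$ lies in $\tilde{L}^2([0,T]^n)$ by symmetry, and $R_N$ is an iterated stochastic integral of depth $N+1$ carrying $\mathcal{E}(h)$ in its innermost slot. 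A direct bound from the iterated It\^o isometry yields $\|R_N\|_{L^2(\Omega)}^2 \le e^{\|h\|^2_{L^2}} \cdot \|h\|_{L^2}^{2(N+1)}/(N+1)! \to 0$ as $N \to \infty$. Hence $\mathcal{E}(h)_T = \sum_{n=0}^\infty J_n(h^{\otimes n}) = \sum_{n=0}^\infty \tfrac{1}{n!} I_n(h^{\otimes n})$ lies in the $L^2$-closure of $\bigoplus_{n \geq 0} C_n$, and by density of $\Psi$ this closure equals $L^2(\Omega)$.

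Third, combining the orthogonal decomposition $L^2(\Omega) = \bigoplus_{n \ge 0} C_n$ with the isometry $I_n : \tilde{L}^2([0,T]^n) \to C_n$ (up to the factor $\sqrt{n!}$), every $F \in L^2(\Omega)$ admits a decomposition $F = \sum_{n \ge 0} I_n(f_n)$ with $f_n \in \tilde{L}^2([0,T]^n)$: existence is orthogonal projection onto each $C_n$, while uniqueness of the symmetric representer $f_n$ comes from the injectivity of $I_n$ on $\tilde{L}^2([0,T]^n)$ guaranteed by the norm formula. Parseval's identity in this orthogonal sum then delivers $\|F\|^2_{L^2(\Omega)} = \sum_n n!\, \|f_n\|^2_{L^2([0,T]^n)}$, in particular proving the series converges.

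The step I expect to be the main obstacle is the control of the remainder $R_N$ together with the careful bookkeeping that reconciles the simplex integrals $J_n$ with the symmetric full-cube integrals $I_n$ via $I_n(g) = n!\,J_n(g)$. The chaos expansion is in essence a rearrangement of the stochastic Taylor series of $\mathcal{E}(h)$, and making that rearrangement rigorous --- showing the remainder vanishes in $L^2$ and that the candidate kernels genuinely belong to $\tilde{L}^2([0,T]^n)$ after symmetrisation --- is where all the technical work is concentrated.
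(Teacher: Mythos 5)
Your proof is correct, and it is essentially the standard argument (the one in the cited reference \cite{Oksendal2}); the paper itself states this theorem without proof, so there is no internal argument to compare against. Your route fits naturally with what the paper has already built: the density of the stochastic exponentials $\Psi$ in $L^2(\mathcal{F}_T,\mathbb{P})$ is exactly the content of the paper's Section 1.2.1, and your Picard expansion of $\mathcal{E}(h)_T$ with the remainder bound $\|R_N\|^2_{L^2}\le e^{\|h\|^2}\|h\|^{2(N+1)}/(N+1)!$ is the correct way to place each exponential inside the closed span of the chaoses. The orthogonality relation $\mathbb{E}(I_n(f_n)I_m(g_m))=\delta_{nm}\,n!\,\langle f_n,g_m\rangle$ then gives uniqueness and Parseval exactly as you say. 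The only cosmetic omission is the identification of the zeroth kernel: since $\mathbb{E}(I_n(f_n))=0$ for $n\ge 1$, the projection onto the constants forces $I_0(f_0)=f_0=\mathbb{E}(F)$, which is what justifies the second equality in the displayed expansion; one line noting this would make the statement's form $F=\mathbb{E}(F)+\sum_{n\ge 1}I_n(f_n)$ explicit.
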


Moreover, we have:
\[ J_n(F) = I_n(f_n), \]
so J can be seen as an orthogonal projection of F on the n-th Chaos $C_n$ \cite{Li}:
\[ F = \sum^{\infty}_{n=0} J_n(F).  \]

At this stage, we denote the following space:

\begin{defn}
\[ \mathcal{P} = \{ F: \Omega \rightarrow \mathbb{R} \mid F(\omega) = p (W_{t_1}...W_{t_n}), p(x) \mbox{: polynomial, }  p \in C_0[0,T] \} \]
and also we introduce the Cameron-Martin space $\mathcal{H}$:
\[ \mathcal{H} = \big\{ \gamma: [0,T] \rightarrow \mathbb{R} \mid \gamma(t) = \int^t_0 \dot{\gamma}(s)ds, |\gamma|^2_{\mathcal{H}} = \int^T_0 \dot{\gamma}(s)^2ds <\infty \big\} \subseteq C_0[0,T]\]
\end{defn}

At this stage, we can now introduce the concept of directional derivative;

\begin{defn}
$ \forall \in \mathcal{P}$ the directional derivative $D_{\gamma}F(\omega)$ $\forall \gamma \in \mathcal{H}$ is defined as:

\begin{equation}
D_{\gamma}F(\omega) = \lim_{\epsilon \rightarrow 0 }\frac{F(\omega +\epsilon \gamma) - F(\omega)}{\epsilon},
\label{dirder1}
\end{equation}

or alternatively,
\begin{equation}
D_{\gamma}F(\omega) = \sum^n_{i=1} \frac{\partial p}{\partial x_i}(W_{t_1},...,W_{t_n}) \gamma(t_i).
\label{dirder2}
\end{equation}

$\forall F \in \mathcal{P}$, the function $D_{\gamma}: \mathcal{H} \rightarrow L^2(\Omega )$ is continuous \cite{Oksendal2} and obeys to the product rule:
\[ D_{\gamma}(FG) = F D_{\gamma}G + G D_{\gamma}F. \]
\end{defn}

\begin{thm}[{\bf Riesz Representation theorem} \cite{Rudin}]
For every F that has a defined derivative $D_{\gamma}F$ $\forall \gamma \in \mathcal{H}$, $\exists ! \bigtriangledown F(\omega) \in \mathcal{H}$ such as:
\begin{equation}
D_{\gamma}F(\omega) = \langle \gamma, \bigtriangledown F(\omega) \rangle_{\mathcal{H}} = \int^T_0 \dot{\bigtriangledown F(\omega)} \dot{\gamma} dt.
\label{riesz}
\end{equation}
\end{thm}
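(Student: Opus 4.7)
The plan is to recognise the statement as a direct application of the classical Riesz representation theorem on the Cameron-Martin Hilbert space $\mathcal{H}$, applied $\omega$-wise to the linear functional $\Phi_\omega : \mathcal{H} \to \mathbb{R}$ defined by $\Phi_\omega(\gamma) := D_{\gamma}F(\omega)$. Since $\mathcal{H}$ is a Hilbert space under the inner product $\langle \gamma_1,\gamma_2\rangle_{\mathcal{H}} = \int_0^T \dot{\gamma}_1(s)\dot{\gamma}_2(s)\,ds$, once we verify that $\Phi_\omega$ is linear and bounded, Riesz delivers the unique element $\nabla F(\omega) \in \mathcal{H}$ representing it.

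First I would fix $\omega \in \Omega$ and use formula (\ref{dirder2}) to write $\Phi_\omega(\gamma) = \sum_{i=1}^n \partial_{x_i} p(W_{t_1},\ldots,W_{t_n})\,\gamma(t_i)$. Linearity in $\gamma$ is immediate because $\gamma \mapsto \gamma(t_i)$ is linear. For boundedness, I would use that $\gamma(t_i) = \int_0^{t_i} \dot{\gamma}(s)\,ds$ and Cauchy-Schwarz, which yields $|\gamma(t_i)| \le \sqrt{t_i}\,|\gamma|_{\mathcal{H}} \le \sqrt{T}\,|\gamma|_{\mathcal{H}}$. Hence
\[ |\Phi_\omega(\gamma)| \le \sqrt{T}\,|\gamma|_{\mathcal{H}} \sum_{i=1}^n |\partial_{x_i} p(W_{t_1}(\omega),\ldots,W_{t_n}(\omega))|, \]
so $\Phi_\omega$ is a continuous linear functional on $\mathcal{H}$ for every $\omega$.

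Next I would construct $\nabla F(\omega)$ explicitly to make the representation concrete. Introduce $h_i(s) := s \wedge t_i$, which lies in $\mathcal{H}$ since $\dot{h}_i(s) = \mathbf{1}_{[0,t_i]}(s) \in L^2([0,T])$. Then
\[ \langle \gamma, h_i\rangle_{\mathcal{H}} = \int_0^T \dot{\gamma}(s)\,\mathbf{1}_{[0,t_i]}(s)\,ds = \gamma(t_i), \]
so substituting in (\ref{dirder2}) gives
\[ D_{\gamma}F(\omega) = \Big\langle \gamma,\; \sum_{i=1}^n \partial_{x_i} p(W_{t_1},\ldots,W_{t_n})\,h_i \Big\rangle_{\mathcal{H}}, \]
which identifies $\nabla F(\omega)(t) = \sum_{i=1}^n \partial_{x_i} p(W_{t_1},\ldots,W_{t_n})\,(t\wedge t_i)$.

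Finally, uniqueness is inherited from the Riesz theorem: if two elements $u,v \in \mathcal{H}$ satisfied $\langle \gamma,u\rangle_{\mathcal{H}} = \langle \gamma,v\rangle_{\mathcal{H}}$ for all $\gamma \in \mathcal{H}$, then $\langle \gamma, u-v\rangle_{\mathcal{H}} = 0$ for every $\gamma$, and taking $\gamma = u-v$ forces $u=v$. I do not foresee a genuine obstacle here — the result is structurally a corollary of classical Riesz, and the only mild care needed is in checking that the representing vector indeed lives in $\mathcal{H}$ (not merely in some weaker completion), which the explicit form $h_i(s)=s\wedge t_i$ makes transparent.
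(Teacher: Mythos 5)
The paper offers no proof of this statement at all --- it is quoted as a black box with a citation to Rudin --- so there is nothing to compare against; what matters is whether your argument stands on its own, and it does. Your verification is correct: $\mathcal{H}$ with $\langle\gamma_1,\gamma_2\rangle_{\mathcal{H}}=\int_0^T\dot\gamma_1\dot\gamma_2\,dt$ is a Hilbert space (it is isometric to $L^2([0,T])$ via $\gamma\mapsto\dot\gamma$), the functional $\gamma\mapsto D_\gamma F(\omega)$ is linear, and your Cauchy--Schwarz bound $|\gamma(t_i)|\le\sqrt{t_i}\,|\gamma|_{\mathcal{H}}$ gives continuity. In fact your explicit construction with $h_i(s)=s\wedge t_i$ makes the abstract Riesz theorem almost superfluous: it exhibits the representing element directly, confirms it lies in $\mathcal{H}$, and its density $\dot{\nabla F}(t)=\sum_i\partial_{x_i}p(W_{t_1},\ldots,W_{t_n})\mathbf{1}_{[0,t_i]}(t)$ is precisely the formula for $D_tF$ that the paper later derives in Theorem \ref{BMpolynomDer} \emph{from} this Riesz statement --- so your proof also pre-validates that downstream result. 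The one caveat worth flagging: your argument covers $F\in\mathcal{P}$, where formula (\ref{dirder2}) applies, whereas the theorem as written claims the conclusion for ``every $F$ that has a defined derivative.'' For such general $F$ one additionally needs the hypothesis that $\gamma\mapsto D_\gamma F(\omega)$ is a \emph{bounded} linear functional (mere existence of directional derivatives does not give this); the paper does assert continuity of $D_\gamma$ on $\mathcal{P}$ just before the theorem, so in context your restriction is the intended reading, but it deserves an explicit sentence.
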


The Malliavin derivative $D_tF$ of F can then de defined as such:

\begin{defn}[{\bf Malliavin derivative}]
 $D_t: \mathcal{P} \rightarrow L^2([0,T] \times \Omega)$  is the Radon-Nikodym derivative of $\bigtriangledown F(\omega)$:

\begin{equation}
D_{\gamma}F(\omega) = \int^T_0 D_tF(\omega) \dot{\gamma}(t) dt. \nonumber
\end{equation}

The Malliavin derivative $D_t F$  is also continuous, closable and obeys to the product rule. 
\label{malliaderdef}
\end{defn}

Then we introduce the following semi-norm on $\mathcal{P}$ :

\begin{equation}
\| |F| \|_{1,2} = \Big[ \mathbb{E}(|F|^2)+ \mathbb{E}\big(\| D_tF\|^2_{L^2([0,T])}\big) \Big]^{1/2}.
\label{d12norm}
\end{equation}

The completion of  $\mathcal{P}$ under this new norm in \ref{d12norm} creates a Banach space $\mathbb{D}_{1,2}$ called a Sobolev space. $\mathbb{D}_{1,2}$  is a Hilbert space such as \cite{Oksendal2}:
\[ \mathbb{D}_{1,2}= \Big\{ F \in L^2(\Omega) \mid \{F_n\}_{n \in \mathbb{N}} \rightarrow F, \{D_tF_n\}_{n \in \mathbb{N}} \mbox{ Cauchy in } L^2([0,T] \times \Omega) \Big\}. \]

\begin{thm}
If $\phi : \mathbb{R}^n \rightarrow \mathbb{R}$ is Lipschitz i.e. $\forall$ x,y $ \in \mathbb{R}^n$ and $\exists$ K constant, we have:
\[ | \phi(x) - \phi(y)| \leq K |x-y|, \]
and F = ($ F_i$) $ F_i\in \mathbb{D}_{1,2}$ $\forall i \leq n$,
Then $\phi(F) \in  \mathbb{D}_{1,2}$ and 
\[ D_t \phi(F) = \sum^n_{i=1} \frac{\partial \phi}{\partial x_i}(F) D_t F_i \mbox{ \cite{Li}}.\]
\end{thm}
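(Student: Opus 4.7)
The plan is a two-step approximation: first establish the chain rule for smooth $\phi$ by the product rule already recorded in Definition \ref{malliaderdef}, then mollify a general Lipschitz $\phi$ and pass to the limit via closability of $D_t$.

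I would begin with the smooth case. If $\phi \in C^1_b(\mathbb{R}^n)$ and each $F_i \in \mathcal{P}$ (so that $F = (F_1,\dots,F_n)$ is a smooth cylinder vector), then $\phi(F) \in \mathcal{P}$ after approximation by polynomials, and the representation \eqref{dirder2} together with the ordinary chain rule in $\mathbb{R}^n$ gives, for every $\gamma \in \mathcal{H}$,
\[
D_\gamma \phi(F)(\omega) \;=\; \sum_{i=1}^n \frac{\partial \phi}{\partial x_i}(F(\omega))\, D_\gamma F_i(\omega).
\]
Applying the Riesz identity \eqref{riesz} coordinate-by-coordinate turns this into
\[
D_t \phi(F) \;=\; \sum_{i=1}^n \frac{\partial \phi}{\partial x_i}(F)\, D_t F_i,
\]
and the $\mathbb{D}_{1,2}$ semi-norm \eqref{d12norm} is finite because $\partial_i \phi$ is bounded and each $F_i \in \mathbb{D}_{1,2}$. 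A density argument closing $\mathcal{P}$ in the $\|\cdot\|_{1,2}$ norm then extends the smooth chain rule to arbitrary $F_i \in \mathbb{D}_{1,2}$.

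For the main case, $\phi$ Lipschitz with constant $K$, I would mollify: let $\rho \in C^\infty_c(\mathbb{R}^n)$ be a standard non-negative bump with $\int \rho = 1$ and set $\phi_\epsilon = \phi \ast \rho_\epsilon$ where $\rho_\epsilon(x)=\epsilon^{-n}\rho(x/\epsilon)$. Each $\phi_\epsilon$ is $C^\infty$, still $K$-Lipschitz, and $\phi_\epsilon \to \phi$ uniformly on compact sets as $\epsilon \downarrow 0$. By Rademacher's theorem $\phi$ is differentiable Lebesgue-a.e., $\|\nabla \phi_\epsilon\|_\infty \le K$ uniformly, and $\nabla \phi_\epsilon(x) \to \nabla\phi(x)$ at every Lebesgue point of $\nabla \phi$. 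The smooth-case result applies to each $\phi_\epsilon$ to give
\[
D_t \phi_\epsilon(F) \;=\; \sum_{i=1}^n \frac{\partial \phi_\epsilon}{\partial x_i}(F)\, D_t F_i.
\]
Since $|\phi_\epsilon(F)-\phi(F)| \le K\|F-\text{(mollification param.)}\|$-type bounds combined with uniform continuity on the support of the law of $F$ give $\phi_\epsilon(F) \to \phi(F)$ in $L^2(\Omega)$. For the derivative side I would bound the $L^2([0,T]\times\Omega)$ norm of $D_t\phi_\epsilon(F)$ by $K \sum_i \|D_tF_i\|_{L^2}$ uniformly in $\epsilon$, then extract a weakly convergent subsequence; the dominated convergence theorem applied to $\partial_i \phi_\epsilon(F)\, D_t F_i$ identifies the weak limit as $\sum_i \partial_i \phi(F)\, D_t F_i$.

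At this point closability of $D_t$ (stated in Definition \ref{malliaderdef}) does the final work: a sequence $\phi_\epsilon(F) \to \phi(F)$ in $L^2(\Omega)$ whose derivatives converge (weakly, hence along a Ces\`aro-Mazur strongly convergent combination) in $L^2([0,T]\times\Omega)$ forces the limit $\phi(F)$ to lie in $\mathbb{D}_{1,2}$ with derivative equal to that limit. The main obstacle, and the place where some care is needed, is the a.e.-definition of $\partial_i \phi(F)$: since $\phi$ need not be differentiable everywhere, one must know that the law of $F$ does not charge the Lebesgue-null exceptional set, so that $\partial_i \phi(F)$ is unambiguously defined $\mathbb{P}$-a.s.; in the generic $\mathbb{D}_{1,2}$ setting this is handled by choosing any Borel representative of $\nabla\phi$ bounded by $K$, after which the weak-convergence argument is insensitive to the choice. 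Once that subtlety is dispatched, the closability step closes the proof.
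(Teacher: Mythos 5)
The paper states this result without proof, citing Li, so there is no in-paper argument to compare against; your proposal supplies the standard one (mollification plus closability, as in Nualart's Proposition 1.2.4) and its overall architecture is sound. Two remarks. First, a minor point: in your smooth case, if $F_i \in \mathcal{P}$ then $\phi(F)$ is a smooth cylinder functional of $(W_{t_1},\dots,W_{t_n})$ but not literally a polynomial one, so it does not lie in $\mathcal{P}$ as the paper defines it; you need to either enlarge $\mathcal{P}$ to smooth cylinder functionals or run the polynomial approximation explicitly before invoking \eqref{dirder2}. This is routine but should be said.

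Second, and more substantively: the identification of the weak limit of $\sum_i \partial_i\phi_\epsilon(F)\,D_tF_i$ as $\sum_i \partial_i\phi(F)\,D_tF_i$ is the genuine gap, and your parenthetical dismissal of it is not correct as stated. Rademacher gives $\nabla\phi_\epsilon(x)\to\nabla\phi(x)$ only for Lebesgue-a.e.\ $x$; if the law of $F$ charges the exceptional null set, then $\partial_i\phi_\epsilon(F)$ need not converge $\mathbb{P}$-a.s., dominated convergence does not apply, and the weak limit \emph{does} depend on which Borel representative of $\nabla\phi$ you pick --- different representatives differ on a set that $F$ may hit with positive probability. What the compactness argument actually yields in general is the weaker conclusion that $\phi(F)\in\mathbb{D}_{1,2}$ with $D_t\phi(F)=\sum_i G_i\,D_tF_i$ for \emph{some} random vector $G$ bounded by the Lipschitz constant $K$; the formula with $G_i=\partial_i\phi(F)$ requires either $\phi\in C^1$ or absolute continuity of the law of $F$ with respect to Lebesgue measure. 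This imprecision is inherited from the statement of the theorem itself (it is a known looseness in several textbook formulations), but your proof should either add the absolute-continuity hypothesis at the identification step or state the conclusion in the weakened form; as written, the claim that the argument is ``insensitive to the choice'' of representative is where the proof would fail.
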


The Malliavin derivative $D_t$ aslo has the following useful property:
\begin{thm}[Oksendal \cite{Oksendal2}]
For any $F(\omega) = I_n(f_n)$ where $f_n \in \tilde{L}^2([0,T]^n)$,  F $\in \mathbb{D}_{1,2}$ and
\begin{equation}
D_t F(\omega) = n I_{n-1}(f_n).
\label{diffIn}
\end{equation}
\end{thm}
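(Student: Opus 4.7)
The plan is to interpret the right hand side correctly, verify the identity on a dense class of simple symmetric functions by direct calculation using the product rule for $D_t$, and then extend to all of $\tilde{L}^2([0,T]^n)$ by the isometry of multiple integrals together with the closability of the Malliavin derivative stated in Definition \ref{malliaderdef}. First I note that the formula must be read as $D_t F(\omega) = n\, I_{n-1}\!\bigl(f_n(\cdot,t)\bigr)$, where $f_n(\cdot,t)$ denotes the symmetric function of $n-1$ variables obtained by fixing the last argument equal to $t$; since $f_n$ is symmetric this is unambiguous.

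Next, I would verify the formula on elementary symmetric functions of the form $f_n = \mathrm{Sym}(1_{A_1}\otimes\cdots\otimes 1_{A_n})$, where $A_1,\dots,A_n$ are pairwise disjoint Borel subsets of $[0,T]$. For such a function a direct computation using Definition \ref{defInJn} and the vanishing of Ito integrals over ``diagonal'' blocks gives
\begin{equation*}
I_n(f_n) \;=\; \prod_{i=1}^{n} W(A_i), \qquad W(A_i) := \int_0^T 1_{A_i}(s)\,dW_s.
\end{equation*}
Since each $W(A_i)\in\mathbb{D}_{1,2}$ with $D_t W(A_i)=1_{A_i}(t)$, the product rule for $D_t$ yields
\begin{equation*}
D_t I_n(f_n) \;=\; \sum_{i=1}^{n} 1_{A_i}(t)\prod_{j\neq i} W(A_j).
\end{equation*}
On the other hand $f_n(\cdot,t)=\frac{1}{n}\sum_{i=1}^{n} 1_{A_i}(t)\,\mathrm{Sym}\!\bigl(\bigotimes_{j\neq i}1_{A_j}\bigr)$, and applying $I_{n-1}$ to each summand produces exactly $\prod_{j\neq i}W(A_j)$. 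Multiplying by $n$ matches the expression above, confirming the identity on this elementary class, and hence on their linear span by linearity of both sides.

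Finally I would extend by density. Symmetric step functions built from disjoint-rectangle indicators are dense in $\tilde{L}^2([0,T]^n)$, so for $f_n\in\tilde{L}^2([0,T]^n)$ pick an approximating sequence $f_n^{(k)}\to f_n$. By the It\^o isometry $\|I_n(f_n^{(k)}-f_n^{(\ell)})\|_{L^2(\Omega)}^2 = n!\,\|f_n^{(k)}-f_n^{(\ell)}\|_{L^2([0,T]^n)}^2$, the sequence $I_n(f_n^{(k)})$ is Cauchy in $L^2(\Omega)$. Moreover, applying Fubini and the isometry for $I_{n-1}$,
\begin{equation*}
\bigl\|\,n\,I_{n-1}(f_n^{(k)}(\cdot,t)-f_n^{(\ell)}(\cdot,t))\bigr\|_{L^2([0,T]\times\Omega)}^{2}
 \;=\; n\cdot n!\,\|f_n^{(k)}-f_n^{(\ell)}\|_{L^2([0,T]^n)}^{2},
\end{equation*}
so the derivatives also form a Cauchy sequence in $L^2([0,T]\times\Omega)$. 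By the characterisation of $\mathbb{D}_{1,2}$ in the text and the closability of $D_t$, the limit $F=I_n(f_n)$ lies in $\mathbb{D}_{1,2}$ and $D_t F = n\,I_{n-1}(f_n(\cdot,t))$.

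The main obstacle is the combinatorial step on the elementary class: one must check that the $n!$ terms appearing in the symmetrisation of $f_n$ collapse, because of the disjointness of the $A_i$, to a single product $\prod_i W(A_i)$ on the diagonal-free set, and that the factor of $n$ on the right hand side matches the $n$ identical contributions produced by the product rule. Once this bookkeeping is done correctly, the density/closability argument is standard.
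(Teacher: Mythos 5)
Your proof is correct, and it is worth noting that the paper itself supplies no argument for this theorem — it is stated bare with a citation to \O ksendal — so there is no in-text proof to compare against; what you have written is essentially the standard textbook derivation (elementary off-diagonal functions, product rule, then isometry plus closability), and it is a genuine addition. The combinatorial core is right: $I_n(\mathrm{Sym}(1_{A_1}\otimes\cdots\otimes 1_{A_n}))=\prod_i W(A_i)$ for disjoint $A_i$, the product rule gives $\sum_i 1_{A_i}(t)\prod_{j\neq i}W(A_j)$, and your computation of $f_n(\cdot,t)$ as $\frac{1}{n}\sum_i 1_{A_i}(t)\,\mathrm{Sym}(\bigotimes_{j\neq i}1_{A_j})$ correctly accounts for the $(n-1)!$-fold grouping, so the factor $n$ matches. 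The norm identity $\|n\,I_{n-1}(g(\cdot,t))\|^2_{L^2([0,T]\times\Omega)}=n\cdot n!\,\|g\|^2$ is also right and is exactly what makes the $\mathbb{D}_{1,2}$ membership drop out of the paper's characterisation of that space. The only detail I would tighten: the paper defines $D_t$ (and its product rule and closability) on the class $\mathcal{P}$ of polynomials in $W_{t_1},\dots,W_{t_n}$, so to stay strictly within that framework you should take the $A_i$ to be disjoint intervals $(s_i,t_i]$, making $W(A_i)=W_{t_i}-W_{s_i}$ and hence $\prod_i W(A_i)\in\mathcal{P}$; indicators of finite unions of intervals are still dense in $L^2([0,T])$, so nothing is lost. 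With that adjustment the density/closability step is airtight.
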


To match the Malliavin derivative, there is a special form of integration similar to stochastic integration:

\begin{defn}[{\bf Skorohod integral of u}]

$u(t,\omega)$ is a $\mathcal{F}_T$-measurable r.v. for all t $\in [0,T]$ such as:
\[ \mathbb{E}(u^2(t)) < \infty, \]
and u(t) has a Wiener-It$\hat{o}$ chaos expansion $u(t) = \sum^{\infty}_{n=0} I_n(f_n) $ where $f_n \in \tilde{L}^2([0,T]^n)$. set
\[ \tilde{f}_n(t_1,...,t_{n+1}) = \frac{1}{n+1} \big[ f_n(t_1,...,t_{n+1})+ f_n(t_2,...,t_{n+1},t_1)+... \big]. \]

For u(t) = $\sum^{\infty}_{n=0} I_n(f_n)$, the Skorohod integral is defined as:
\begin{equation}
\delta(u) = \int^t_0 u(t)\delta W_t = \sum^{\infty}_{n=0} I_{n+1}(\tilde{f}_n)
\label{skorohod}
\end{equation}
whenever :

\begin{equation}
\mathbb{E}(\delta(u)^2) =  \sum^{\infty}_{n=0} (n+1)! \|\tilde{f}_n\|^2_{L^2} < \infty,
\label{skorohodcond}
\end{equation}

in which case, we say $u \in Dom(\delta)$.
\end{defn}

The Skorohod integration $\delta$ and the Malliavin derivative $D_t$ are connected through the following version of the fundamental theorem of calculus

\begin{thm}[{\bf The fundamental theorem of calculus}]
Let u(s) be a Skorohod-integrable stochastic process contained in $\mathbb{D}_{1,2}$ and that $\forall t \in [0,T]$, $D_tu$ is also Skorohod-integrable. Then:
\[ D_t \Big( \int^T_0 u(s) \delta W_s \big) = \int^T_0 D_t u(s) \delta W_s + u(t) \]
\end{thm}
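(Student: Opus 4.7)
The plan is to reduce everything to the Wiener--It\^o chaos expansion and then match kernels chaos by chaos. First I would use (\ref{wienerchaosexp}) to write, for each $s$,
\[ u(s) = \sum_{n=0}^{\infty} I_n(f_n(\cdot\,;s)), \]
where $f_n(\cdot\,;s) \in \tilde{L}^2([0,T]^n)$ is symmetric in its first $n$ arguments. The hypothesis that $u \in \mathbb{D}_{1,2}$ and that $D_tu$ is Skorohod-integrable will supply the $L^2$-convergence bounds (via (\ref{skorohodcond})) needed to justify interchanging $D_t$ with the infinite sum defining $\delta(u)$.

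Next I would compute both sides. By the definition (\ref{skorohod}) of the Skorohod integral,
\[ \int_0^T u(s)\,\delta W_s = \sum_{n=0}^{\infty} I_{n+1}(\tilde f_n), \]
where $\tilde f_n$ is the full symmetrization of $f_n(t_1,\dots,t_n;t_{n+1})$ in all $n+1$ variables. Applying $D_t$ chaos by chaos using (\ref{diffIn}) yields
\[ D_t\!\int_0^T u(s)\,\delta W_s = \sum_{n=0}^{\infty} (n+1)\, I_n\!\left(\tilde f_n(\cdot,t)\right). \]
For the right-hand side, formula (\ref{diffIn}) gives $D_t u(s) = \sum_{n=1}^{\infty} n\, I_{n-1}(f_n(\cdot,t\,;s))$, and Skorohod-integrating in $s$ produces
\[ \int_0^T D_tu(s)\,\delta W_s = \sum_{n=1}^{\infty} n\, I_n\!\left(\widetilde{f_n(\cdot,t\,;\cdot)}\right), \]
where the tilde denotes the symmetrization over the remaining $n$ slots (the $n-1$ already-symmetric ones together with the $s$-slot). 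Adding $u(t)=\sum_n I_n(f_n(\cdot\,;t))$ then reorganizes the right-hand side as a sum over chaos levels.

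The core step is then the combinatorial identity on the level-$n$ kernels, namely
\[ (n+1)\,\tilde f_n(t_1,\dots,t_n,t) = f_n(t_1,\dots,t_n;t) + n\,\widetilde{f_n(\cdot,t\,;\cdot)}(t_1,\dots,t_n). \]
Writing out the full symmetrization on the left as the average of the $n+1$ kernels obtained by singling out each slot to play the role of the parameter, one such term reproduces exactly $f_n(t_1,\dots,t_n;t)$ (when the last slot $t$ is singled out), while the remaining $n$ terms together form $n$ times the partial symmetrization $\widetilde{f_n(\cdot,t\,;\cdot)}$, using the symmetry of $f_n$ in its first $n$ arguments to move the frozen $t$ into any fixed slot.

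The main obstacle I expect is not the chaos-level identity itself, which is a short check, but rather the justification of the interchange of $D_t$ with the infinite chaos sum and the verification that the chaos series for $\int_0^T D_tu(s)\,\delta W_s$ converges in $L^2([0,T]\times\Omega)$. For this I would lean on the hypothesis $u \in \mathbb{D}_{1,2}$ together with the assumed Skorohod-integrability of $D_tu$, combining the isometry (\ref{skorohodcond}) with the $\mathbb{D}_{1,2}$-norm control (\ref{d12norm}) to bound $\sum_n (n+1)! \,\|\tilde f_n(\cdot,t)\|^2$ uniformly in $t$, which validates all the formal manipulations above.
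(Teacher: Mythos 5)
The paper states this theorem without any proof (it is imported as a known result from \cite{Oksendal2}), so there is nothing internal to compare against; your chaos-expansion argument is the standard proof and it is correct. The combinatorial identity $(n+1)\,\tilde f_n(t_1,\dots,t_n,t) = f_n(t_1,\dots,t_n;t) + n\,\widetilde{f_n(\cdot,t;\cdot)}(t_1,\dots,t_n)$ is exactly the heart of the matter, and your justification of it (peeling off the term of the full symmetrization in which the slot $t$ is the one singled out as the parameter, and recognising the remaining $n$ terms as the partial symmetrization over the $s$-slot and the $n-1$ free slots, using the symmetry of $f_n$ in its first $n$ arguments) is the right bookkeeping; it also correctly accounts for the chaos level $n=0$, where only the $u(t)$ term contributes. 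One small refinement: the bound $\sum_n (n+1)!\,\|\tilde f_n(\cdot,t)\|^2<\infty$ should not be expected uniformly in $t$ but only for $dt$-almost every $t$, so the identity is an equality in $L^2([0,T]\times\Omega)$ rather than pointwise in $t$; relatedly, the hypotheses as stated in the theorem are slightly loose, since one really wants $u\in L^2([0,T];\mathbb{D}_{1,2})$ together with square-integrability of $t\mapsto\delta(D_t u)$ to conclude that $\delta(u)\in\mathbb{D}_{1,2}$ at all (so that the left-hand side is defined). Neither point undermines your argument; they are the standard technical caveats attached to this commutation relation.
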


\begin{thm}
$u \in Dom(\delta) \Longrightarrow \delta(u) \in L^2 $
\end{thm}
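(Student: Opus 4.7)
The statement follows almost directly from the way the Skorohod integral is defined as a formal series and the way the domain condition is imposed, so the main content of the proof is to verify that this series genuinely converges in $L^2(\Omega)$. My plan is to treat $\delta(u)$ as the $L^2$-limit of partial sums of distinct Wiener chaos components and to exploit the orthogonality of these components, together with the isometry $\mathbb{E}[I_{n+1}(g)^2]=(n+1)!\|g\|_{L^2([0,T]^{n+1})}^2$ that follows from definition \ref{defInJn} and the Wiener--It\^o chaos expansion \ref{wienerchaosexp}.

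The first step is to take $u\in Dom(\delta)$ with chaos expansion $u(t)=\sum_{n\geq 0}I_n(f_n)$, form the symmetrizations $\tilde f_n$, and write the candidate Skorohod integral as the formal series $\sum_{n\geq 0}I_{n+1}(\tilde f_n)$. I then define the partial sums $S_N=\sum_{n=0}^{N}I_{n+1}(\tilde f_n)$ and aim to show $\{S_N\}$ is Cauchy in $L^2(\Omega)$. For $M>N$, since $I_{n+1}(\tilde f_n)\in C_{n+1}$ and distinct Wiener chaoses $C_k$ are mutually orthogonal in $L^2(\Omega)$, the cross terms vanish and
\begin{equation*}
\|S_M-S_N\|_{L^2(\Omega)}^2=\sum_{n=N+1}^{M}\bigl\|I_{n+1}(\tilde f_n)\bigr\|_{L^2(\Omega)}^2=\sum_{n=N+1}^{M}(n+1)!\,\|\tilde f_n\|_{L^2([0,T]^{n+1})}^2.
\end{equation*}

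The hypothesis $u\in Dom(\delta)$ is precisely the condition \ref{skorohodcond} that the full series $\sum_{n\geq 0}(n+1)!\,\|\tilde f_n\|_{L^2}^2$ converges, so the tail on the right-hand side tends to $0$ as $N\to\infty$. Hence $\{S_N\}$ is Cauchy, and by completeness of $L^2(\Omega)$ converges to some limit which, by equation \ref{skorohod}, is exactly $\delta(u)$. Passing to the limit also yields the explicit identity $\mathbb{E}[\delta(u)^2]=\sum_{n\geq 0}(n+1)!\,\|\tilde f_n\|_{L^2}^2<\infty$, so $\delta(u)\in L^2(\Omega)$ as claimed.

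The step that requires the most care is the orthogonality computation: one must be sure that the bound $\|I_{n+1}(\tilde f_n)\|_{L^2}^2=(n+1)!\,\|\tilde f_n\|_{L^2}^2$ uses the symmetrization and not $f_n$ itself, and that $\tilde f_n$ indeed lies in $\tilde L^2([0,T]^{n+1})$. Beyond that, the argument is essentially a completeness and orthogonality argument in the Fock space structure underlying $L^2(\Omega,\mathcal F,\mathbb{P})$, so there is no real obstacle; the statement is a sanity check that the domain condition stated in the definition is strong enough to guarantee square-integrability of the integral.
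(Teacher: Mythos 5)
Your proof is correct. The paper itself offers no proof of this statement --- it is recorded as an immediate consequence of the definition, since the membership condition $u\in Dom(\delta)$ in \ref{skorohodcond} is literally the assertion that $\mathbb{E}(\delta(u)^2)=\sum_{n\geq 0}(n+1)!\,\|\tilde f_n\|^2_{L^2}<\infty$. What your argument adds, correctly, is the verification that the formal series $\sum_{n\geq 0}I_{n+1}(\tilde f_n)$ genuinely converges in $L^2(\Omega)$: the partial sums are Cauchy because distinct Wiener chaoses are orthogonal and the isometry $\|I_{n+1}(\tilde f_n)\|^2_{L^2(\Omega)}=(n+1)!\,\|\tilde f_n\|^2_{L^2([0,T]^{n+1})}$ (valid because $\tilde f_n$ is by construction symmetric) reduces the tail of the partial sums to the tail of the convergent numerical series in \ref{skorohodcond}. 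This is the right level of care --- in particular your remark that the isometry must be applied to the symmetrization $\tilde f_n$ rather than to $f_n$ itself is exactly the point that is easy to get wrong --- and there is no gap.
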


The Skorohod integral has a couple of nice properties,  notably that
\[ \mathbb{E}( \delta(u)) = 0\]
as it is an iterated integral of Brownian motion, and hence has zero expectation.

\begin{thm}
u(t) is an $\mathbb{F}$-adpated r.v. such as 
\[ \mathbb{E}( \int^T_0 u^2(t)dt) < \infty. \]
Then u $\in Dom(\delta)$ and the Skorohod integral coincides with the It$\hat{o}$ integral:
\[
\int^T_0 u(t) \delta W_t = \int^T_0 u(t) dW_t.
\]
\label{skoequality}
\end{thm}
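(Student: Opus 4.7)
The plan is to expand $u(t)$ in Wiener chaos for each fixed $t$, use adaptedness to constrain the kernels, and then observe that the Skorohod symmetrization collapses in a way that matches the iterated It$\hat{o}$ integral. I would write $u(t) = \sum_{n=0}^{\infty} I_n(f_n(\cdot,t))$ with $f_n(\cdot,t) \in \tilde{L}^2([0,T]^n)$ symmetric in its first $n$ arguments. Since $u(t)$ is $\mathcal{F}_t$-measurable, the identity $I_n(f_n(\cdot,t)) = \mathbb{E}(I_n(f_n(\cdot,t)) \mid \mathcal{F}_t)$ combined with the conditional-expectation formula \ref{condiexpI} applied to $G=[0,t]$ forces $f_n(t_1,\ldots,t_n,t)=0$ whenever some $t_i > t$. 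This support restriction is the engine of the whole argument.

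Next, I would treat $f_n$ as a function on $[0,T]^{n+1}$ with the last coordinate playing the role of $t$. The Skorohod symmetrization \ref{skorohod} averages $n+1$ terms, each obtained by promoting one coordinate into the ``time'' slot. By the support constraint just established, only the summand in which the promoted coordinate is the maximum of all $n+1$ arguments can be nonzero; hence the $n+1$ summands have essentially disjoint supports on $[0,T]^{n+1}$, overlapping only on the null set where two coordinates coincide. In particular, on the ordered simplex $\{t_1 < \cdots < t_{n+1}\}$ one obtains the clean identity $\tilde{f}_n = \frac{1}{n+1}\,f_n(t_1,\ldots,t_n;t_{n+1})$.

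Disjointness of supports, together with an obvious change of variables, gives $\|\tilde{f}_n\|^2_{L^2([0,T]^{n+1})} = \frac{1}{n+1}\int_0^T \|f_n(\cdot,s)\|^2_{L^2([0,T]^n)}\,ds$. Multiplying by $(n+1)!$ and summing in $n$ yields $\sum_n (n+1)!\,\|\tilde{f}_n\|^2 = \sum_n n! \int_0^T \|f_n(\cdot,s)\|^2\,ds = \mathbb{E}\int_0^T u(s)^2\,ds < \infty$, which is exactly the criterion \ref{skorohodcond} and therefore shows $u \in Dom(\delta)$. For the equality of the two integrals, I would use $I_n = n!\,J_n$ on symmetric kernels as in definition \ref{defInJn}: the It$\hat{o}$ integral $\int_0^T u(t)\,dW_t$ rewrites as $\sum_n n!\,J_{n+1}(f_n)$ because adaptedness makes the iterated It$\hat{o}$ integral and the cube integral match on the simplex, and the simplex identity for $\tilde{f}_n$ then recasts this as $\sum_n (n+1)!\,J_{n+1}(\tilde{f}_n) = \sum_n I_{n+1}(\tilde{f}_n) = \delta(u)$.

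The main obstacle will be the combinatorial bookkeeping in the symmetrization step: verifying with care that the $n+1$ summands have supports meeting only in a null set, so the $L^2$ norm of their sum equals the sum of the squared norms. Once that is done, both the domain membership and the equality of the two integrals fall out from matching the same simplex-iterated integral against two different factorial prefactors.
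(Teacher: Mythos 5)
The paper states this theorem without any proof (it is quoted from the literature on the Skorohod integral), so there is no in-paper argument to compare against; judged on its own, your proposal is correct and is the standard proof. Your engine --- that adaptedness forces $f_n(t_1,\ldots,t_n,t)\prod_{i}1_{[0,t]}(t_i)=f_n(t_1,\ldots,t_n,t)$, i.e.\ the kernel vanishes when some $t_i>t$ --- is exactly the characterization the paper records as the theorem immediately following \ref{skoequality}, and you derive it correctly from \ref{condiexpI} with $G=[0,t]$ plus uniqueness of the chaos expansion. The disjoint-support analysis of the symmetrization is right: off the null set where two coordinates coincide, exactly one of the $n+1$ summands in $\tilde f_n$ survives (the one whose promoted coordinate is the maximum), which yields both $\tilde f_n=\frac{1}{n+1}f_n(t_1,\ldots,t_n;t_{n+1})$ on the ordered simplex and the norm identity $(n+1)!\,\|\tilde f_n\|^2=n!\int_0^T\|f_n(\cdot,s)\|^2\,ds$; summing and invoking the chaos isometry gives \ref{skorohodcond}, hence $u\in Dom(\delta)$. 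The identification $\int_0^T I_n(f_n(\cdot,t))\,dW_t=n!\,J_{n+1}(g)=(n+1)!\,J_{n+1}(\tilde f_n)=I_{n+1}(\tilde f_n)$ then matches $\delta(u)$ term by term. Two small points deserve explicit mention in a write-up: the interchange of the infinite chaos sum with the It\^o integral must be justified by $L^2$ convergence (which your norm bound supplies), and the simplex/cube bookkeeping should state that all identities hold only up to Lebesgue-null sets, which is harmless for $L^2$ kernels.
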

This theorem illustrates the usefulness of the Skorohod integral: it is an equivalent of the regular stochastic integral, but is applicable to stochastic processes that are $\mathbb{F}$-adapted or not. The following theorem describes easily in which case we are in.

\begin{thm}
u(t) is $\mathcal{F}_T$-measurable and $\mathbb{E}(u^2(t)) < \infty$. We have:
\[ u(t) = \sum^{\infty}_{n=0} I_n(f_n). \]
u(t) is $\mathbb{F}$-adapted if and only if \cite{Oksendal3}
\begin{equation}
f_n(t_1,...,t_n,t) 1_{t < \max_{i \leq n} t_i} = 0.
\end{equation}
\end{thm}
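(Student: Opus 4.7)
The plan is to prove the equivalence by reducing $\mathbb{F}$-adaptedness to a statement about each chaotic component, using the conditional expectation formula \ref{condiexpI} and the uniqueness of the Wiener-It\^o chaos expansion (equation \ref{wienerchaosexp}).

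First I would unpack what $\mathbb{F}$-adaptedness means here: for every fixed $t \in [0,T]$, the random variable $u(t)$ should be $\mathcal{F}_t$-measurable. Writing out the chaos expansion $u(t) = \sum_{n=0}^{\infty} I_n(f_n(\cdot,t))$ in $L^2(\Omega)$, the idea is to compare $u(t)$ with its conditional expectation $\mathbb{E}(u(t) \mid \mathcal{F}_t)$. By \ref{condiexpI} with $G=[0,t]$, one has
\[ \mathbb{E}(u(t) \mid \mathcal{F}_t) = \sum_{n=0}^{\infty} I_n\Bigl(f_n(t_1,\ldots,t_n,t)\prod_{i=1}^{n}\mathbf{1}_{[0,t]}(t_i)\Bigr), \]
where the sum converges in $L^2(\Omega)$ because conditional expectation is an $L^2$-contraction.

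For the ``only if'' direction, $\mathbb{F}$-adaptedness gives $u(t) = \mathbb{E}(u(t)\mid \mathcal{F}_t)$ almost surely. Subtracting the two chaos expansions and invoking the uniqueness of the Wiener-It\^o decomposition forces, for each $n$,
\[ f_n(t_1,\ldots,t_n,t) = f_n(t_1,\ldots,t_n,t)\prod_{i=1}^{n}\mathbf{1}_{[0,t]}(t_i) \quad \text{in } \tilde{L}^2([0,T]^n). \]
In other words, $f_n(\cdot,t)$ vanishes Lebesgue-a.e.\ on the complement of $[0,t]^n$, i.e.\ on the set where $\max_{i \le n} t_i > t$. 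This is exactly the condition $f_n(t_1,\ldots,t_n,t)\mathbf{1}_{t < \max_i t_i}=0$.

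For the ``if'' direction, I would run the same argument in reverse: if the kernels are supported in $\{\max_i t_i \le t\}$, then each summand $I_n(f_n(\cdot,t))$ already equals $I_n(f_n(\cdot,t)\prod_i \mathbf{1}_{[0,t]}(t_i))$, which by \ref{condiexpI} is $\mathcal{F}_t$-measurable (it is a polynomial in Wiener integrals of indicators of subsets of $[0,t]$). Since a countable $L^2$-limit of $\mathcal{F}_t$-measurable random variables is itself $\mathcal{F}_t$-measurable, $u(t)$ is $\mathcal{F}_t$-measurable, and since $t$ was arbitrary, $u$ is $\mathbb{F}$-adapted. The main subtlety to be careful about is that all equalities of kernels and of random variables are in the almost-everywhere / almost-sure sense, so uniqueness of the chaos expansion must be applied at the level of $\tilde{L}^2([0,T]^n)$ rather than pointwise; I expect this bookkeeping around null sets, together with clearly stating that the hypothesis is over all $t$ simultaneously, to be the only delicate point.
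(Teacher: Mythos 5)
The paper states this theorem without proof, merely citing Di Nunno--{\O}ksendal--Proske, so there is no in-paper argument to compare against; your proposal is correct and is essentially the standard proof of this characterization. Reducing adaptedness to the identity $u(t)=\mathbb{E}(u(t)\mid\mathcal{F}_t)$, applying the conditional-expectation formula \ref{condiexpI} termwise (justified by the $L^2$-contraction property), and then invoking uniqueness of the Wiener--It\^o expansion to get $f_n(\cdot,t)=f_n(\cdot,t)\prod_i \mathbf{1}_{[0,t]}(t_i)$ in $\tilde{L}^2([0,T]^n)$ is exactly the right chain of ideas, and the only delicate points --- that these kernel identities hold a.e.\ for each fixed $t$ and must then be assembled over all $t$, and that $\mathcal{F}_{[0,t]}$ agrees with $\mathcal{F}_t$ up to null sets --- are ones you have correctly flagged.
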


Theorem \ref{skoequality} enables us to establish the following equality:
\begin{equation}
\int^T_0 \sum^{\infty}_n J_n(f_n) dW_t = \sum^{\infty}_n J_{n+1}(f_n).
\label{intofJ}
\end{equation}

\subsection{The Clark-Ocone formula: Result and application to the MRT}

\begin{thm}[{\bf The Clark-Ocone formula} \cite{Oksendal2}]
$ \forall F \in \mathbb{D}_{1,2}$ where F is $\mathcal{F}_T$-measurable, the following representation holds:

\begin{equation}
F(\omega) = \mathbb{E}(F) + \int^T_0 \mathbb{E}(D_t F \mid \mathcal{F}_t) dW(t).
\label{clarkocone}
\end{equation}
\end{thm}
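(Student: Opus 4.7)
The plan is to proceed via the Wiener--It\^o chaos expansion (equation \ref{wienerchaosexp}) and reduce the statement to a term-by-term identity on the $n$-th chaos $C_n$. Since $F \in \mathbb{D}_{1,2}$ is $\mathcal{F}_T$-measurable, I would first write $F = \mathbb{E}(F) + \sum_{n=1}^{\infty} I_n(f_n)$ with $f_n \in \tilde{L}^2([0,T]^n)$ symmetric, the convergence holding in $L^2(\Omega)$. Because $F \in \mathbb{D}_{1,2}$, the Malliavin derivative can be applied term by term using equation \ref{diffIn}, giving
\[
D_t F = \sum_{n=1}^{\infty} n\, I_{n-1}\bigl(f_n(\cdot,t)\bigr),
\]
where $f_n(\cdot,t)$ denotes $f_n$ with its last argument frozen at $t$, viewed as a symmetric function of the remaining $n-1$ variables.

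Next I would apply $\mathbb{E}(\,\cdot\,|\mathcal{F}_t)$ term by term, invoking equation \ref{condiexpI} with $G=[0,t]$ to obtain
\[
\mathbb{E}(D_t F \mid \mathcal{F}_t) = \sum_{n=1}^{\infty} n\, I_{n-1}\!\Bigl(f_n(\cdot,t)\prod_{i=1}^{n-1}\mathbf{1}_{[0,t]}(t_i)\Bigr).
\]
The presence of the indicators makes the integrand $\mathbb{F}$-adapted, so that by the theorem identifying the Skorohod integral with the It\^o integral on adapted processes (the result preceding equation \ref{intofJ}), the stochastic integral against $dW_t$ is well defined in the ordinary It\^o sense. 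The key computation is then to recognise that
\[
n\, I_{n-1}\!\Bigl(f_n(\cdot,t)\prod_{i=1}^{n-1}\mathbf{1}_{[0,t]}(t_i)\Bigr) = n\cdot(n-1)!\, J_{n-1}\bigl(f_n(\cdot,t)\bigr) = n!\, J_{n-1}\bigl(f_n(\cdot,t)\bigr),
\]
by Definition \ref{defInJn}, so that integrating against $dW_t$ reassembles the full iterated integral:
\[
\int_0^T n!\, J_{n-1}\bigl(f_n(\cdot,t)\bigr)\, dW_t = n!\, J_n(f_n) = I_n(f_n),
\]
which is precisely equation \ref{intofJ} applied in the $n$-th chaos. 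Summing over $n$ recovers $F - \mathbb{E}(F)$, proving the Clark--Ocone formula.

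The main obstacle is analytic rather than algebraic: one must justify interchanging the infinite sum with the Malliavin derivative, the conditional expectation, and the It\^o integral. This is where the $\mathbb{D}_{1,2}$ hypothesis carries its weight, since $F \in \mathbb{D}_{1,2}$ is equivalent to $\sum_{n\ge 1} n\cdot n!\,\|f_n\|_{L^2([0,T]^n)}^2 < \infty$, which controls $\|D_tF\|_{L^2([0,T]\times\Omega)}$ and, via It\^o's isometry, the $L^2$-norm of the partial sums of the stochastic integral. I would establish the identity first for $F = \mathbb{E}(F) + \sum_{n=1}^{N} I_n(f_n)$ (a finite chaos, for which every step is a genuine equality of $L^2$ elements) and then pass to the limit $N\to\infty$, verifying $L^2$-convergence on both sides using It\^o's isometry for the right-hand side and the chaos-norm identity for the left.
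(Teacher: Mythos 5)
Your proposal is correct and follows essentially the same route as the paper's own proof: chaos expansion, term-by-term Malliavin differentiation via equation \ref{diffIn}, conditional expectation via equation \ref{condiexpI} with $G=[0,t]$, conversion of $I_{n-1}$ to $J_{n-1}$, and reassembly of the iterated integrals through equation \ref{intofJ}. The only difference is that you explicitly flag and sketch the justification of the interchanges of sum, derivative, conditional expectation and stochastic integral (finite chaos first, then an $L^2$ limit using the $\mathbb{D}_{1,2}$ norm and It\^o's isometry), which the paper passes over in silence; this is a welcome addition rather than a divergence.
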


\begin{proof} This result is a statement on the It$\hat{o}$ representation theorem seen in chapter 1 result \ref{itorepth}; $\forall F \in L^2(\Omega)$ where F is  $\mathcal{F}_T$-measurable, $\exists !  \phi(t)$ such as:
\[ F = \mathbb{E}(F) + \int^T_0 \phi(t) dW_t. \]
The difference is that here, we have an explicit form for $\phi(t)$ and we claim $\phi(t) = \mathbb{E}(D_t F \mid \mathcal{F}_t) $.\\
We know from the Wiener-It$\hat{o}$ chaos expansion (result \ref{wienerchaosexp}) that:
\[ F = \sum^{\infty}_{n=0} I_n(f_n) \]
where $f_n \in \tilde{L}^2([0,T]^n)$. We then have the following:
\begin{eqnarray*}
\int^T_0 \mathbb{E}(D_t F \mid \mathcal{F}_t) dW_t &=& \int^T_0 \mathbb{E}(D_t \sum^{\infty} I_n(f_n) \mid \mathcal{F}_t) dW_t \mbox{ ,using result \ref{wienerchaosexp} } \\
&=& \sum^{\infty} \int^T_0 \mathbb{E}(D_t I_n(f_n) \mid \mathcal{F}_t) dW_t \\
&=& \sum^{\infty}_{n=1} \int^T_0 \mathbb{E}(n I_{n-1}(f_n) \mid \mathcal{F}_t) dW_t \mbox{ ,using \ref{diffIn}} \\
&=& \sum^{\infty}_{n=1} n \int^T_0 \mathbb{E}(I_{n-1}(f_n) \mid \mathcal{F}_t) dW_t \\
&=& \sum^{\infty}_{n=1} n \int^T_0 I_{n-1}(f_n \prod^{n-1}_i 1_{\mathcal{F}_t}(t_i))  dW_t \mbox{ ,using \ref{condiexpI}} \\
&=& \sum^{\infty}_{n=1} n \int^T_0 I_{n-1}(f_n \prod^{n-1}_i 1_{[0,t]}(t_i))  dW_t \\
&=& \sum^{\infty}_{n=1} n(n-1)! \int^T_0 J_{n-1}(f_n \prod^{n-1}_i 1_{[0,t]}(t_i))  dW_t \mbox{ ,using definition \ref{defInJn}} \\
&=& \sum^{\infty}_{n=1} n! \int^T_0 J_{n-1}(f_n)  dW_t \\
&=& \sum^{\infty}_{n=1} n!  J_{n}(f_n)  \mbox{ ,using result \ref{intofJ}} \\
&=&  \sum^{\infty}_{n=1} I_{n}(f_n) = \sum^{\infty}_{n=0} I_{n}(f_n) - I_{0}(f_0) = F - \mathbb{E}(F).
\end{eqnarray*}
\end{proof}

Looking at the martingale representation theorem in the continuous filtration as seen in section 1.2.1, we see that for all $(m_t) \in \mathcal{M}^2 \cap \mathbb{D}_{1,2}$,
\[ m_t = \mathbb{E}( m_{\infty} \mid \mathcal{F}_t ) = \int^t_0 \phi_s dW_s \]
for some $\phi_t$ deterministic where, through result \ref{clarkocone}, we know that $\phi_t = \mathbb{E}(D_t m_{\infty} \mid \mathcal{F}_t) $. So the can re-write the martingale representation theorem as such:

\begin{eqnarray}
 \forall m \in \mathcal{M}^2 \cap \mathbb{D}_{1,2}, m_t &=& \mathbb{E}(m_{\infty} \mid \mathcal{F}_t) \nonumber \\
&=& \mathbb{E}(\int^{\infty}_0 \mathbb{E}( D_s m_{\infty} \mid \mathcal{F}_s) dW_s \mid \mathcal{F}_t) \nonumber \\
&=& \int^t_0 \mathbb{E}( D_s m_{\infty} \mid \mathcal{F}_s) dW_s.
\label{contMRTmod}
 \end{eqnarray}

Now, for this representation result to hold, it is important to know whether $ \int^t_0 \mathbb{E}( D_s m_{\infty} \mid \mathcal{F}_s)^2 ds < \infty$ for all t $\in [0,T]$.\\
Since $m_t \in \mathcal{M}^2 \cap \mathbb{D}_{1,2}$ for any t in [0,T], we know that:
\[ \| |m_{\infty}| \|_{1,2}= \Big[ \mathbb{E}(|m_{\infty}|^2)+ \mathbb{E}\big(\| D_t m_{\infty}\|^2_{L^2([0,T])}\big) \Big]^{1/2} < \infty,\]
hence
\[ \mathbb{E}\big(\| D_t m_{\infty}\|^2_{L^2([0,T])}\big)  < \infty, \]
which implies that indeed:
\begin{eqnarray*}
\mathbb{E}(\int_{[0,T]} \mathbb{E}( D_s m_{\infty} \mid \mathcal{F}_s)^2 ds) &<&  \mathbb{E}(\int_{[0,T]} \mathbb{E}( (D_s m_{\infty})^2 \mid \mathcal{F}_s) ds) \\
&=& \mathbb{E}(\mathbb{E}(\int_{[0,T]}  (D_s m_{\infty})^2  ds\mid \mathcal{F}_s) ) \\
&=& \mathbb{E}(\int_{[0,T]}  (D_s m_{\infty})^2  ds ) \\
&=& \mathbb{E}( \| D_s m_{\infty} \| ^2_{L^2}) < \infty.
\end{eqnarray*}

So $\int_{[0,T]} \mathbb{E}( D_s m_{\infty} \mid \mathcal{F}_s)^2 ds < \infty$ a.s. and thus the martingale representation theorem in result \ref{contMRTmod} is well-defined.

\subsection{Explicit integrand representation beyond continuous processes: Poisson Malliavin Calculus and Clark-Ocone formula applied to jump processes}

\subsubsection{Poisson Malliavin Calculus}

In this setting we work with the compensated poisson process $\tilde{N} (t,z)$, a martingale contained in $ L^2( [0,T] \times \mathbb{R}_0^n ) $. It evolves on a complete probability space $(\Omega, \mathcal{F}, P)$, where $\mathcal{F}_t$  is the $\sigma$-algebra generated by $\tilde{N} (s,z)$, $ 0 \leq s \leq t$.\cite{Oksendal3} 
\\
Let $\mu$ be a l$\acute{e}$vy measure, and $\lambda$ the regular Lebesgue measure. $\tilde{L}^2 ( \lambda \times \mu )$ denote the space of symmetric functions in $L^2 ( \lambda \times \mu )$, which itself is the space of suitably square integrable functions:
\begin{eqnarray*}
\| f \|^2_{ L^2 ( \lambda \times \mu ) } = \int_{ ([0,T] \times \mathbb{R}_0)^n } f^2 dt_1 \mu (dz_1)...dt_n \mu (dz_n) < \infty.
\end{eqnarray*}
Set:
\begin{eqnarray*}
G_n = \Big\{ (t_i, z_i)_{i=1,n} \Big| 0 \leq t_1 \leq ... \leq t_n \leq T \Big| z_i \in \mathbb{R}_0 \Big\}
\end{eqnarray*}
 and  $ L^2 (G_n) = \big\{ g \in G_n \big| \| g \|^2_{ L^2 ( G_n) } < \infty \big\}.$
\\
As earlier, the n-fold Iterated Stochastic integrals are defined as bellow:
\begin{eqnarray*}
J_n(g) &=& \int^T_0 \int_{\mathbb{R}_0} ... \int^{t_2}_0 \int_{\mathbb{R}_0} g(t_1,z_1...) \tilde{N} (dt_1,dz_1) ....\tilde{N} (dt_n,dz_n) \mbox{ , } \forall g \in  L^2 (G_n),\\
I_n(g) &=& n! J_n(g) = \int_{([0,T] \times \mathbb{R}_0)^n} g(t_1,z_1...) \tilde{N} (dt_1,dz_1) ....\tilde{N} (dt_n,dz_n),
\end{eqnarray*}
\\
and as in the continuous case, there is a Wiener-Ito Chaos expansion:\\
$ \forall F \in L^2(P)$, where F is $\mathcal{F}_T$-measurable, $ \exists $ unique $ f_n \in \tilde{L}^2 ( (\lambda \times \mu)^n )$ such as: 
\begin{eqnarray*}
F = \sum^{\infty}_{n=0} I_n(f_n).
\end{eqnarray*}

\begin{defn}[{\bf Sobolev Stochastic space}]
$ \mathbb{D}_{1,2} \subset L^2 ( \lambda \times \mu )$ such as \cite{Oksendal3}
\begin{eqnarray*}
\| F \|^2_{  \mathbb{D}_{1,2} } = \sum^{\infty}_{n=1} n n! \| f_n \|^2_{ L^2 ((\lambda \times \mu )^n) } < \infty.
\end{eqnarray*}
\end{defn}

In the Poisson setting, the Malliavin derivative of F has an alternative definition at (t,z) \cite{Oksendal3} :
\begin{eqnarray}
D: \mathbb{D}_{1,2} \rightarrow  L^2 ( P \times \lambda \times \mu) \nonumber \\
D_{t,z} F = \sum^{\infty}_{n=1} n I_{n-1}(f_n(.,t,z)).
\label{poissMallDer}
\end{eqnarray}
In this case as earlier, the D operator follows some classic rules of traditional calculus: \\ \\
{\bf Closability}: \\
$ F, F_k \in \mathbb{D}_{1,2}$ $\forall k \in \mathbb{N}$, and $ F_k \rightarrow F$ in $ L^2(P)$ as well as $ D_{t,z}F_k$ converges. \\
Then $  D_{t,z}F_k \rightarrow D_{t,z} F $.
\\ \\
{\bf Chain Rule:} \\ 
$ F \in \mathbb{D}_{1,2}$ and $\phi$ is continuous on $\mathbb{R}$. Given $\phi(F) \in L^2(P)$ and  $\phi(F+D_{t,z}F) \in L^2(P \times \lambda \times \mu)$ \\
Then $\phi(F) \in \mathbb{D}_{1,2}$  and $D_{t,z}\phi(F) = \phi(F+D_{t,z}F) - \phi(F)$.
\\\\
{\bf Integration by parts:}\\
X(t,z) is Skorohod integrable, $ F \in \mathbb{D}_{1,2}$ and $ X(t,z)(F + D_{t,z}F)$ is also Skorohod integrable, then:

\begin{eqnarray*}
F \int^T_0 \int_{\mathbb{R}_0} X(t,z) \tilde{N} (\delta t,\delta z)&& \\
= \int^T_0 \int_{\mathbb{R}_0} X(t,z)(F + D_{t,z}F) \tilde{N} (\delta t,\delta z) &+&  \int^T_0 \int_{\mathbb{R}_0} X(t,z)D_{t,z}F \mu(dz)dt.
\end{eqnarray*}

\subsubsection{Clark-Ocone formula: a jump diffusion version}

\begin{thm}[{\bf Jump process Clark-ocone formula \cite{Oksendal3}:}]
$\forall F \in \mathbb{D}_{1,2}$ we have:
\begin{equation}
F = \mathbb{E}(F) + \int^T_0 \int_{\mathbb{R}_0} \mathbb{E}(D_{t,z} F \mid \mathcal{F}_t) \tilde{N}(dt,dz)
\label{jumpClarkOcone}
\end{equation}
whenever $ \mathbb{E}(D_{t,z} F \mid \mathcal{F}_t)$ is predictable.
\end{thm}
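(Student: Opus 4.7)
The plan is to mimic the continuous-case proof of the Clark-Ocone formula given above, but with the Poisson Wiener-It\^{o} chaos expansion and the Poisson Malliavin derivative (\ref{poissMallDer}) replacing their Brownian analogues. First, I would expand $F\in\mathbb{D}_{1,2}$ in terms of multiple stochastic integrals against $\tilde{N}$: $F=\sum_{n=0}^{\infty} I_n(f_n)$ with $f_n\in\tilde{L}^2((\lambda\times\mu)^n)$. Termwise application of the Poisson Malliavin derivative then yields
\[
D_{t,z}F \;=\; \sum_{n=1}^{\infty} n\, I_{n-1}\bigl(f_n(\cdot,t,z)\bigr),
\]
where the $L^2$-convergence is guaranteed by the assumption $\|F\|_{\mathbb{D}_{1,2}}^{2}=\sum_n n\,n!\|f_n\|^2<\infty$.

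Next, I would take conditional expectation with respect to $\mathcal{F}_t$. By the Poisson analogue of the restriction property (\ref{condiexpI}), conditioning an iterated integral $I_{n-1}(g)$ on $\mathcal{F}_t$ amounts to multiplying its kernel by $\prod_{i=1}^{n-1}1_{[0,t]}(t_i)$. Hence
\[
\mathbb{E}(D_{t,z}F\mid\mathcal{F}_t) \;=\; \sum_{n=1}^{\infty} n\, I_{n-1}\!\left(f_n(\cdot,t,z)\prod_{i=1}^{n-1}1_{[0,t]}(t_i)\right).
\]
Since this process is assumed predictable, the Skorohod integral against $\tilde{N}$ coincides with the compensated Poisson stochastic integral, and so the right-hand side of (\ref{jumpClarkOcone}) is a genuine martingale increment.

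Third, I would integrate over $[0,T]\times\mathbb{R}_0$ against $\tilde{N}(dt,dz)$ and use the Poisson analogue of the identity (\ref{intofJ}), namely
\[
\int_0^T\!\!\int_{\mathbb{R}_0} J_{n-1}\!\left(f_n(\cdot,t,z)\prod_{i=1}^{n-1}1_{[0,t]}(t_i)\right)\tilde{N}(dt,dz) \;=\; J_n(f_n),
\]
combined with $I_{n-1}=(n-1)!J_{n-1}$. Each summand then collapses to $n\cdot(n-1)!\,J_n(f_n)=n!\,J_n(f_n)=I_n(f_n)$. Summing over $n\geq 1$ recovers $F-\mathbb{E}(F)$, as required.

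The main technical obstacle is justifying the three interchanges — of the infinite sum with the Malliavin derivative, with conditional expectation, and with the $\tilde{N}$-integral — all in the $L^2$ sense; the membership $F\in\mathbb{D}_{1,2}$ together with the Skorohod isometry controls these. A more subtle point is the Poisson version of (\ref{intofJ}): the time-ordered simplex $G_n$ in the Poisson setting introduces a combinatorial factor relating $I_n$ and $J_n$ which must be tracked carefully when restricting kernels to $[0,t]^{n-1}$ and then re-integrating the outermost variable. The predictability hypothesis ensures that, after this unwinding, the Skorohod representation genuinely produces a compensated Poisson integral of a predictable integrand, completing the argument.
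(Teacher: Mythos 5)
Your proposal follows essentially the same route as the paper's own proof: chaos expansion of $F$, termwise application of the Poisson Malliavin derivative, conditional expectation restricting each kernel to $[0,t]^{n-1}$, and integration against $\tilde{N}$ to collapse $n!\,J_{n-1}$ back to $I_n(f_n)$ and recover $F-\mathbb{E}(F)$. Your additional remarks on justifying the three interchanges in $L^2$ are a welcome refinement of details the paper leaves implicit, but they do not change the argument.
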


\begin{proof} The proof is very similar to the one developed in section 2 for result \ref{clarkocone}.Here again, F has a chaos expansion $F= \sum^{\infty}_{n=0} I_n(f_n)$, $f_n \in \tilde{L}^2((\lambda \times \mu)^n)$. Then:

\begin{eqnarray*}
\int^T_0 \int_{\mathbb{R}_0} \mathbb{E}(D_{t,z} F \mid \mathcal{F}_t) \tilde{N}(dt,dz) &=& \int^T_0 \int_{\mathbb{R}_0} \mathbb{E}( \sum^{\infty}_{n=1} nI_{n-1}(f_n(.,t,z)) \mid \mathcal{F}_t) \tilde{N}(dt,dz)  \mbox{, result \ref{poissMallDer}}\\
&=&\sum^{\infty}_{n=1} n \int^T_0 \int_{\mathbb{R}_0} \mathbb{E}((n-1)!J_{n-1}(f_n(.,t,z)) \mid \mathcal{F}_t) \tilde{N}(dt,dz) \\
&=& \sum^{\infty}_{n=1} n! \int^T_0 \int_{\mathbb{R}_0} \mathbb{E}( J_{n-1}(f_n(.,t,z)) \mid \mathcal{F}_t) \tilde{N}(dt,dz) \\
&=& \sum^{\infty}_{n=1} n! \int^T_0 \int_{\mathbb{R}_0} J_{n-1}(f_n(.,t,z)1_{[0,t]})\tilde{N}(dt,dz) \\
&=& \sum^{\infty}_{n=1} n!  J_{n}(f_n(.,t,z)) = \sum^{\infty}_{n=1} I_{n}(f_n(.,t,z)) = F - \mathbb{E}(F).
\end{eqnarray*}

\end{proof}

As seen in Part1 (section 2.3.2), all Poisson pure-jump martingales in $ \mathcal{M}^2$ can be expressed as:
\[ m_t = \int^t_0 \int_{\mathbb{R}_0} g(s,z)\tilde{N}(ds,dz) \]
for suitable integrand g(s,z). Now we know, through result \ref{jumpClarkOcone} that for all $m \in \mathbb{D}_{1,2} \cap \mathcal{M}^2$, we have:
\[ m_{\infty} =  \int^T_0 \int_{\mathbb{R}_0} \mathbb{E}(D_{t,z} m_{\infty} \mid \mathcal{F}_t) \tilde{N}(dt,dz). \]
Hence the can write the Jump-process Martingales representation theorem as follow:
\[ \forall m \in \mathbb{D}_{1,2} \cap \mathcal{M}^2, \mbox{  } m_t = \mathbb{E}(m_{\infty}) =  \int^t_0 \int_{\mathbb{R}_0} \mathbb{E}(D_{s,z} m_{\infty} \mid \mathcal{F}_s) \tilde{N}(ds,dz). \]

Provided $m_{\infty} < \infty$, since  $(m_t) \in \mathbb{D}_{1,2}$ we can see that:

\begin{eqnarray*}
\mathbb{E}( [ \int^t_0 \int_{\mathbb{R}_0} \mathbb{E}(D_{s,z} m_{\infty} \mid \mathcal{F}_s) \tilde{N}(ds,dz)]^2) &=& \mathbb{E}( [ \int^t_0 \int_{\mathbb{R}_0} \mathbb{E}(D_{s,z} m_{\infty} \mid \mathcal{F}_s)^2 \upsilon(ds)dz)]) \\
&<& \mathbb{E}( \| D_t m_{\infty}\|^2_{L^2([0,T] \times \Omega)}) \mbox{ like in section 2} \\
& < &\infty  \mbox{, as } (m_t) \in \mathbb{D}_{1,2}.
\end{eqnarray*}

\subsection{Explicit integrand representation: Clark-Ocone formula applied to general L$\acute{e}$vy processes}

As we saw in chapter 1 result \ref{wiener-poissonmrt}, martingales evolving on Wiener-Poisson spaces have the following representation formula:
\[ m_t = \int^t_0 \phi(t) dW_t + \int^t_0 \int_{\mathbb{R}_0} \psi(t,z) \tilde{N}(dt,dz), \]

where $ \phi(t), \psi(t,z)$ are predictable and $L^2$-integrable.
In this case, just as in section 2.2 and 2.3, the Clark-Ocone formula can be applied and gives an explicit form to $ \phi(t)$ and $\psi(t,z)$:

\begin{thm}[{\bf Clark-Ocone formula for l$\acute{e}$vy processes} \cite{Lokka}]
$\forall F \in L^2(\Omega) \cap \mathbb{D}_{1,2}$,
\[ F = \mathbb{E}(F) + \int^T_0 \mathbb{E}(D_t F \mid \mathcal{F}_t) dW_t +  \int^T_0 \int_{\mathbb{R}_0} \mathbb{E}(D_{t,z} F \mid \mathcal{F}_t) \tilde{N}(dt,dz). \]
\label{general-clark-ocone}
\end{thm}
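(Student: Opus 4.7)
The plan is to extend the two Clark--Ocone proofs already carried out in the excerpt (the Brownian case \ref{clarkocone} and the pure-jump case \ref{jumpClarkOcone}) by exploiting the joint Wiener--Poisson chaos expansion guaranteed by the Lévy--Itô decomposition \ref{levyito-lokka} together with \ref{lemma-lokka}. Concretely, any $F\in L^2(\Omega)\cap\mathbb{D}_{1,2}$ admits a chaos expansion
\[ F = \mathbb{E}(F) + \sum_{n\ge 1}\sum_{k=0}^{n} I_{k,n-k}(f_{k,n-k}), \]
where $I_{k,n-k}$ denotes an iterated integral with $k$ factors $dW$ and $n-k$ factors $\tilde N(dt,dz)$, and the kernels $f_{k,n-k}$ lie in the appropriate symmetric $L^2$ space. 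The two Malliavin operators $D_t$ and $D_{t,z}$ act on this expansion by differentiating the Brownian and Poisson ``slots'' respectively, producing analogues of \ref{diffIn} and \ref{poissMallDer}: each $D_s$ drops one $W$-slot and pulls down a factor, and each $D_{s,z}$ drops one $\tilde N$-slot.

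First I would establish the two differentiation rules on a mixed iterated integral and verify they extend continuously to $\mathbb{D}_{1,2}$ via the closability property already stated. Second I would split
\[ F - \mathbb{E}(F) = A + B, \qquad A := \sum_{n,k,\,k\ge 1} I_{k,n-k}(f_{k,n-k}),\quad B := \sum_{n,k,\,n-k\ge 1} I_{k,n-k}(f_{k,n-k}), \]
grouping terms according to whether the outermost integrator is $dW$ or $\tilde N(dt,dz)$ (each mixed term contributes to both sums, but only once per outermost integrator after symmetrisation). Third I would reproduce, on each piece, exactly the computation used to prove \ref{clarkocone} and \ref{jumpClarkOcone}: apply $D_t$ (resp. $D_{t,z}$), take conditional expectation $\mathbb{E}(\cdot\mid\mathcal{F}_t)$ which by \ref{condiexpI} truncates the inner kernels to $[0,t]$, and integrate against $dW_t$ (resp. $\tilde N(dt,dz)$) to rebuild the outer slot via \ref{intofJ}. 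Summing recovers exactly $A$ and $B$ respectively, yielding
\[ \int_0^T\!\mathbb{E}(D_tF\mid\mathcal{F}_t)\,dW_t = A, \qquad \int_0^T\!\!\int_{\mathbb{R}_0}\mathbb{E}(D_{t,z}F\mid\mathcal{F}_t)\,\tilde N(dt,dz) = B. \]

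Finally I would check the two integrability conditions that make these stochastic integrals legitimate by repeating the end-of-section estimate from the Brownian case: the $\mathbb{D}_{1,2}$ norm controls $\mathbb{E}\|D F\|_{L^2([0,T])}^2$ and $\mathbb{E}\|D_{\cdot,\cdot}F\|_{L^2([0,T]\times\mathbb{R}_0,dt\,\upsilon(dz))}^2$, and conditional Jensen gives the required finiteness. The main obstacle, and the only genuinely new work, is the careful bookkeeping of the mixed chaos expansion --- in particular justifying the symmetrisation and the disjoint splitting into $A$ and $B$ so that no mixed term is double-counted and the interchanges of sum, conditional expectation and stochastic integral are legal. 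Once that combinatorial step is in place, the rest of the argument is a termwise repetition of the two special cases already proved.
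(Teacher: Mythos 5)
The paper does not actually prove Theorem \ref{general-clark-ocone}: it states the result, cites L{\o}kka \cite{Lokka}, and then introduces the multi-index machinery ($I_n(f)=\sum_{\alpha}\int f^{\alpha}\,dM^{t_1}_{\alpha_1}\cdots dM^{t_n}_{\alpha_n}$ and the derivative $D_{t,\alpha}$ of Definition \ref{Lokkalevyderiv}) only to verify square-integrability of the resulting representation. Your proposal supplies the missing argument, and it is the standard one: a termwise synthesis of the two proofs the paper does give (results \ref{clarkocone} and \ref{jumpClarkOcone}) carried out on the joint Wiener--Poisson chaos expansion, fully compatible with the $D_{t,\alpha}$ formalism the paper sets up afterwards. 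So there is no divergence of method to report, only a proof where the paper has none.

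One concrete repair is needed before the argument closes. As written, $A:=\sum_{k\ge 1}I_{k,n-k}(f_{k,n-k})$ and $B:=\sum_{n-k\ge 1}I_{k,n-k}(f_{k,n-k})$ do not satisfy $A+B=F-\mathbb{E}(F)$: every genuinely mixed term (with $k\ge 1$ and $n-k\ge 1$) is counted twice. You flag this yourself as ``bookkeeping,'' but the fix is where the proof actually lives: pass to the ordered integrals $J_n=I_n/n!$ over the simplex $t_1<\cdots<t_n$ and split each chaos according to the type of the \emph{outermost} integrator,
\[ I_n(f_n)=n\int_0^T J_{n-1}\bigl(f_n(\cdot,t,W)\,1_{[0,t]^{n-1}}\bigr)\,dW_t+n\int_0^T\!\!\int_{\mathbb{R}_0}J_{n-1}\bigl(f_n(\cdot,t,z)\,1_{[0,t]^{n-1}}\bigr)\,\tilde N(dt,dz), \]
so that the two contributions are disjoint by construction. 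The conditional-expectation truncation (the analogue of \ref{condiexpI}, which the paper states only for the Brownian field and which you must extend to the mixed field) then reproduces the indicator $1_{[0,t]^{n-1}}$, and the identification of the two integrands with $\mathbb{E}(D_tF\mid\mathcal{F}_t)$ and $\mathbb{E}(D_{t,z}F\mid\mathcal{F}_t)$ follows from \ref{diffIn}, \ref{poissMallDer} and Definition \ref{Lokkalevyderiv} exactly as in your third step. The remaining ingredients --- completeness of the mixed chaos expansion, which you attribute to \ref{levyito-lokka} and \ref{lemma-lokka} but which is really It\^o's chaos theorem for the Wiener--Poisson space and deserves its own citation, and the $\mathbb{D}_{1,2}$ integrability estimate --- are at the same level of rigour as the paper's own end-of-section computation, so your plan is sound once the splitting is done by outermost integrator rather than by slot count.
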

Using theorem \ref{general-clark-ocone}, we can see that for all martingales $(m_t) \in \mathcal{M}^2 \cap \mathbb{D}_{1,2}$, 
\begin{equation}
m_t = \int^t_0 \mathbb{E}(D_s m_{\infty} \mid \mathcal{F}_s) dW_s +  \int^t_0 \int_{\mathbb{R}_0} \mathbb{E}(D_{s,z} m_{\infty} \mid \mathcal{F}_s) \tilde{N}(ds,dz).
\label{general-mrt}
\end{equation}
Here the concept of $\mathbb{D}_{1,2}$ is understood as such:
\[ \mathbb{D}_{1,2} = \big\{ F = \sum^{\infty}_n I_n(f_n) \mid \sum^{\infty}_n n \cdot n! \| f_n\|^2_n < \infty \big\} \subseteq L^2(\Omega), \]
where:
\[ I_n(f) = \sum_{\alpha} \int_{[0,T]^n} f(t)^{\alpha} dM^{t_1}_{\alpha_1} ... dM^{t_n}_{\alpha_n}, \]
$(M^{t_1}_{\alpha_1} ... M^{t_n}_{\alpha_n})$ is a martingale.
Based on this, we can re-write result \ref{general-mrt} as follow:
\[ m_t = \int^t_0 f(s) \cdot dM_s \]
where
\begin{eqnarray*}
f(t) = \left( \begin{array}{c}\mathbb{E}( D_t m_{\infty} \mid \mathcal{F}_t)\\ \mathbb{E}(  D_{t,z} m_{\infty} \mid \mathcal{F}_t) \end{array} \right) \mbox{, and } M_t = \left( \begin{array}{c} W_t \\ \tilde{N}(t,z) \end{array} \right)
\end{eqnarray*}
Hence we can re-write ($m_t$) as such:

\[ m_t =  \sum_{\alpha} \int^t_0 \mathbb{E}( D_{t,\alpha} m_{\infty} \mid \mathcal{F}_t) dM^{\alpha}_s \]

by setting $\alpha := (1,2)$ and :
\[ M^1_t = W_t \mbox{ , } D_{t,1} m_{\infty} = D_t m_{\infty}, \]
\[ M^2_t = \tilde{N}(t,z) \mbox{ , } D_{t,2} m_{\infty} = D_{t,z} m_{\infty}. \]

In this framework, in order to prove integrability, we give an alternative, more general definition to the operator $D_{t,\alpha}$:

\begin{defn} 
The function $D: \mathbb{D}_{1,2} \mapsto \bigoplus_{\alpha} L^2([0,T] \times \Omega, d\langle M^{\alpha} \rangle \times d\mathbb{P})$ is defined by:
\[ D_{t,\alpha} F := \sum^n_n nI_{n-1}(f^\alpha_n(\cdot, t)). \]
\label{Lokkalevyderiv}
Note that definition \ref{Lokkalevyderiv} is very similar to the definition of the Poisson-Malliavin derivative in result \ref{poissMallDer} and to the conventional definition of the Malliavin derivative for continuous processes set in result \ref{diffIn}. 
\end{defn}
Since $(m_t)$ is in $L^2(\mathcal{F}_T,\mathbb{P}) \cap \mathbb{D}_{1,2}$, it has the following decomposition:
\[ m_{\infty} = \sum_n I_n(f_n) \mbox{ , } \exists f_n \]
whenever $m_{\infty} < \infty$.
Then we can see that for any t inside [0,T] \cite{Lokka},

\begin{eqnarray*}
\mathbb{E}(m_t^2) &=& \mathbb{E}[  (\int^t_0 f(s) \cdot dM_s)^2] \\
&=& \mathbb{E}[ \sum_{\alpha} \int^t_0 (\mathbb{E}(D_{s,\alpha} m_{\infty}\mid \mathcal{F}_s))^2 d<M^{\alpha}>_s] \\
&=& \sum_{\alpha} \int^t_0 \mathbb{E}[ (\mathbb{E}(D_{s,\alpha} m_{\infty} \mid \mathcal{F}_s)^2] d<M^{\alpha}>_s \\
&=& \sum_{\alpha} \int^t_0 \mathbb{E}[ D_{s,\alpha} m_{\infty}^2] d<M^{\alpha}>_s \\
&=& \sum_{\alpha} \int^t_0 \| D_{s,\alpha} m_{\infty} \|^2_{L^2(\Omega)} d<M^{\alpha}>_s \\
&=& \sum_{\alpha} \int^t_0 \| \sum^{\infty}_n nI_{n-1}(f^{\alpha}_n( \cdot,s)) \|^2_{L^2(\Omega)} d<M^{\alpha}>_s  \mbox{ (definition \ref{Lokkalevyderiv}) }\\
&=& \sum^{\infty}_n n^2(n-1)! \sum_{\alpha} \int^t_0 \| f^{\alpha}_n( \cdot,s)) \|^2_{n-1} d<M^{\alpha}>_s  \\
&=& \sum^{\infty}_n n \cdot n! \|f_n\|^2_n < \infty.
\end{eqnarray*}

Hence the integral representation of result \ref{general-mrt} is well defined for any stochastic process $(m_t) \in \mathcal{M}^2 \cap \mathbb{D}_{1,2}$ where $m_{\infty}$ is finite.

\section{Generalization and Extension of the MRT and Clark-Ocone formula}

From the previous chapters we have seen that, on a diverse range of filtrations generated by different stochastic processes, all martingales in $\mathcal{M}^2$ can have a representation in the form:

\[ m_t = \int^t_0 \phi(s) \cdot d M^{\alpha}_s \]
for some remarkable martingale $M^\alpha$, and where $\phi(t)$ and $M^{\alpha}_t$ can be one of multi-dimensional. Additionally, for all martingales in a specific subset of $\mathcal{M}^2$, a.k.a $\mathcal{M}^2 \cap \mathbb{D}_{1,2}$ the intersection of square-integrable martingales and the Sobolev space $\mathbb{D}_{1,2}$, we have the Clark-Ocone formula specifying what the integrand $\phi(t)$ is:
 \[ m_t = \int^t_0 \mathbb{E}(D_s m_{\infty} \mid \mathcal{F}_s) \cdot d M^{\alpha}_s. \]
While this result is very useful and says a lot about martingale representation, it requires further investigation. First of all, we are interested in applying it to processes beyond the $\mathbb{D}_{1,2}$ space. As we will see, this Sobolev space can be too restrictive, especially when we are interested in changing elements of the stochastic base. Similarly, the MRT and Clark-Ocone formula need to remain under a change of measure on the probability space. From various applications, we know that changing measures and the use of the Girsanov theorem are of central importance to Financial Mathematics. But how do the MRT and the Clark-Ocone formula keep up with it? Furthermore, a modified version of these results happens to find an important application in the enlargement of filtrations. Recent literature has been exploring the eventuality of possessing extra information about the markets and the impact this has on trading strategies, notably in the context of insider trading. This requires the base filtration to be enlarged in order to take into account the new information available. How can we adapt the representations results we have to address this?\\
In the first section we introduce a new sobolev space $\mathbb{D}_{1,1}$, where $\mathbb{D}_{1,2} \subset \mathbb{D}_{1,1}$ \cite{Li}, and on which we can put forward various solutions to adapt the Clark-Ocone formula to the Girsanov theorem, which then becomes the Generalized Clark-Ocone representation theorem. This result is applicable to the B.M. as well as to general jump and L$\acute{e}$vy processes. In the latter section we then look at a Clark-Ocone type formula and measure-valued MRT that can be used in order to enlarge the base filtration.

\subsection{The MRT and Clark-Ocone formula beyond $\mathbb{D}_{1,2}$}

So far we have established  that martingales $(m_t)$ in $\mathcal{M}^2 \cap \mathbb{D}_{1,2}$ have an explicit integral representation as explained above. However, it often agreed that the Sobolev space $\mathbb{D}_{1,2}$ is too restrictive; As we will see in the next chapter, $\mathbb{D}_{1,1}$  lends itself better to changes of measures on the Brownian filtration. In this section we only consider the continuous case.

\begin{defn}
 Set \[ \mathcal{P} = \{ F(\omega) \mid F = f(W_{t_1},...,W_{t_n}), f \in \mathcal{C}^{\infty}(\mathbb{R}^{nd}), f -\mbox{ bounded} \}. \]
\end{defn}

\begin{thm}
$\forall F \in \mathcal{P}$, the Malliavin derivative $D_tF$ is equal to:
\[ D_t F = \sum^n_{i=1} \frac{\partial f}{\partial x_i} (W_{t_1},...,W_{t_n})1_{[0,t_i]}(t). \]
\label{BMpolynomDer}
\end{thm}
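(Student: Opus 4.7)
The plan is to read off $D_tF$ directly from the two definitions of the directional derivative and the Riesz/Radon--Nikodym characterization of the Malliavin derivative already introduced in the paper. Since $F = f(W_{t_1},\dots,W_{t_n})$ lies in $\mathcal{P}$ with $f \in \mathcal{C}^{\infty}$ bounded (so in particular smooth in the sense required for the directional derivative to exist and polynomial approximation to apply), the hypotheses of the relevant theorems are automatic; nothing fancier than the chain rule is needed.

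First I would compute the directional derivative $D_\gamma F$ for an arbitrary $\gamma \in \mathcal{H}$ using the second form of the directional derivative given in the paper, namely
\[
D_\gamma F(\omega) \;=\; \sum_{i=1}^{n}\frac{\partial f}{\partial x_i}(W_{t_1},\dots,W_{t_n})\,\gamma(t_i).
\]
Then I would rewrite each evaluation $\gamma(t_i)$ via the definition of $\mathcal{H}$ as
\[
\gamma(t_i) \;=\; \int_0^{t_i}\dot{\gamma}(s)\,ds \;=\; \int_0^{T} 1_{[0,t_i]}(s)\,\dot{\gamma}(s)\,ds,
\]
substitute into the previous expression, and interchange the finite sum and the integral (trivially allowed) to obtain
\[
D_\gamma F(\omega) \;=\; \int_0^{T}\!\left(\sum_{i=1}^{n}\frac{\partial f}{\partial x_i}(W_{t_1},\dots,W_{t_n})\,1_{[0,t_i]}(s)\right)\dot{\gamma}(s)\,ds.
\]

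Next I would compare this with the defining identity of the Malliavin derivative from Definition 2.8, which states that $D_tF$ is the unique (up to a.e.\ equality) element of $L^2([0,T]\times\Omega)$ satisfying
\[
D_\gamma F(\omega) \;=\; \int_0^{T} D_t F(\omega)\,\dot{\gamma}(t)\,dt\qquad \forall\,\gamma\in\mathcal{H}.
\]
Because $\{\dot\gamma : \gamma\in\mathcal{H}\} = L^2([0,T])$ is dense, uniqueness of the Radon--Nikodym density forces the integrand in our previous display to coincide almost everywhere with $D_tF$, yielding exactly
\[
D_tF \;=\; \sum_{i=1}^{n}\frac{\partial f}{\partial x_i}(W_{t_1},\dots,W_{t_n})\,1_{[0,t_i]}(t),
\]
which is the claimed identity.

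There is no real obstacle here; the only mildly subtle point is invoking uniqueness in the Riesz representation to identify the integrand pointwise in $t$ rather than only as a functional against $\dot\gamma$. Since $\dot\gamma$ ranges over all of $L^2([0,T])$ as $\gamma$ varies in $\mathcal{H}$, this uniqueness is exactly the standard fact that an $L^2$ function is determined by its pairings against a dense family, and the proof is complete.
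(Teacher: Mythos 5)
Your proposal is correct and follows essentially the same route as the paper's own proof: compute $D_\gamma F$ from the explicit formula for the directional derivative, rewrite $\gamma(t_i)=\int_0^T 1_{[0,t_i]}(s)\dot\gamma(s)\,ds$, swap the finite sum with the integral, and identify the integrand with $D_tF$ via the defining relation $D_\gamma F=\int_0^T D_tF\,\dot\gamma\,dt$. Your explicit appeal to the density of $\{\dot\gamma:\gamma\in\mathcal{H}\}$ in $L^2([0,T])$ to justify the final identification is a small but welcome addition that the paper leaves implicit.
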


\begin{proof} As stated in proposition 2.14 of \cite{Li} and Theorem 1.6 of Section 2 , we can see that:
\begin{eqnarray*} 
D_{\gamma} F = \sum^n_{i=1} \frac{\partial f}{\partial x_i} (W_{t_1},...,W_{t_n}) \gamma(t_i) &=& \int^T_0 D_t F \dot \gamma dt \\
\sum^n_{i=1} \frac{\partial f}{\partial x_i} (W_{t_1},...,W_{t_n}) \gamma(t_i) &=& \sum^n_{i=1} \frac{\partial f}{\partial x_i} (W_{t_1},...,W_{t_n}) \int^{t_i}_0 \dot \gamma(s) ds \\
&=& \sum^n_{i=1} \frac{\partial f}{\partial x_i} (W_{t_1},...,W_{t_n}) \int^T_0 1_{[0,t_i]}\dot \gamma(s) ds \\
&=& \int^T_0 \sum^n_{i=1} \frac{\partial f}{\partial x_i} (W_{t_1},...,W_{t_n})  1_{[0,t_i]}\dot \gamma(s) ds \\
\mbox{So }  \int^T_0 D_t F \dot \gamma(t) dt &=& \int^T_0 \sum^n_{i=1} \frac{\partial f}{\partial x_i} (W_{t_1},...,W_{t_n})  1_{[0,t_i]}\dot \gamma(s) ds \\
\mbox{Hence } D_t F &=& \sum^n_{i=1} \frac{\partial f}{\partial x_i} (W_{t_1},...,W_{t_n})  1_{[0,t_i]}(t).
\end{eqnarray*}
\end{proof}

\begin{defn}[{\bf Sobolev Space $\mathbb{D}_{1,1}$}]
The banach space $\mathbb{D}_{1,1}$ is the closure of $\mathcal{P}$ under the following $L^2([0,T])$ norm:
\[ \| F\|_{1,1} = \mathbb{E}\big( |F| + \| D_t F \|_{L^2([0,T])} \big) \]
\label{D11}
\end{defn}

The gradient derivative DF=($D^1F,...,D^dF$) has components as stated by Theorem \ref{BMpolynomDer}:
\[ D^i_t F = \sum^n_{i=1} \frac{\partial}{\partial x^{ij}} f(W_{t_1},...,W_{t_n}) 1_{[0,t_i]}(t) \]

\begin{thm}[{\bf Clark-Ocone theorem in $\mathbb{D}_{1,1}$} \cite{Karatzas}]
$\forall F \in \mathbb{D}_{1,1}$, we have the following integral representation:
\[ F = \mathbb{E}(F) + \int^T_0 \mathbb{E}(D_t F \mid \mathcal{F}_t) dW_t. \]
\label{ClarkOconeD11}
\end{thm}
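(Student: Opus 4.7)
The plan is to extend the $\mathbb{D}_{1,2}$ version of the Clark-Ocone formula, equation \ref{clarkocone}, to all of $\mathbb{D}_{1,1}$ by a density argument. By Definition \ref{D11}, the class $\mathcal{P}$ of smooth bounded cylindrical functionals is dense in $\mathbb{D}_{1,1}$ under the norm $\|G\|_{1,1} = \mathbb{E}(|G| + \|D_t G\|_{L^2([0,T])})$. Every $G \in \mathcal{P}$ is bounded and smooth, so it automatically lies in $\mathbb{D}_{1,2}$, and the formula \ref{clarkocone} already provides the desired representation on $\mathcal{P}$.

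Given $F \in \mathbb{D}_{1,1}$, I would choose a sequence $(F_n) \subset \mathcal{P}$ with $\|F_n - F\|_{1,1} \to 0$, invoke the known formula
\[
F_n = \mathbb{E}(F_n) + \int_0^T \mathbb{E}(D_t F_n \mid \mathcal{F}_t)\, dW_t,
\]
and pass to the limit in $L^1(\Omega)$. The left side converges to $F$ by construction, and $|\mathbb{E}(F_n) - \mathbb{E}(F)| \leq \mathbb{E}|F_n - F| \leq \|F_n - F\|_{1,1} \to 0$, so only the convergence of the stochastic integrals needs attention.

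For the stochastic-integral step I would use the Burkholder--Davis--Gundy inequality in its $L^1$ form, which yields
\[
\mathbb{E}\Bigl|\int_0^T \mathbb{E}(D_t(F_n - F) \mid \mathcal{F}_t)\, dW_t\Bigr| \leq C\, \mathbb{E}\Bigl(\int_0^T \bigl|\mathbb{E}(D_t (F_n - F) \mid \mathcal{F}_t)\bigr|^2 \, dt\Bigr)^{1/2}.
\]
The task then reduces to bounding the right-hand side by a constant multiple of $\mathbb{E}\|D_\cdot(F_n - F)\|_{L^2([0,T])}$, which is exactly the quantity controlled by $\|F_n - F\|_{1,1}$.

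The main obstacle is precisely this last inequality: it asserts that the predictable projection $X \mapsto (\mathbb{E}(X_t \mid \mathcal{F}_t))_{t \in [0,T]}$ is a bounded operator on $L^1(\Omega; L^2([0,T]))$. In the $L^2(\Omega; L^2)$ setting this is immediate from conditional Jensen and Fubini, but in the mixed $L^1$--$L^2$ situation it is nontrivial and must be obtained by a maximal or duality argument of Stein--Doob type, which is essentially the Karatzas--Ocone extension. Once this boundedness is in hand, the stochastic integrals form a Cauchy sequence in $L^1(\Omega)$; extracting an almost surely convergent subsequence identifies the limit with $\int_0^T \mathbb{E}(D_t F \mid \mathcal{F}_t)\, dW_t$ and finishes the argument.
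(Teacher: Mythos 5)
Your overall strategy (approximate $F$ by $F_n\in\mathcal{P}\subseteq\mathbb{D}_{1,2}$, apply the $\mathbb{D}_{1,2}$ Clark--Ocone formula \ref{clarkocone} to each $F_n$, and pass to the limit) is the same as the paper's, but the limit passage you propose does not go through, and the step you yourself flag as the ``main obstacle'' is the fatal one. The inequality you need,
\[
\mathbb{E}\Bigl(\int_0^T \bigl|\mathbb{E}(X_t\mid\mathcal{F}_t)\bigr|^2\,dt\Bigr)^{1/2}\;\leq\;C\,\mathbb{E}\Bigl(\int_0^T |X_t|^2\,dt\Bigr)^{1/2},
\]
is the strong-type $(1,1)$ bound for the vector-valued conditional expectation, i.e.\ Stein's inequality at $p=1$, and it is false in general: Stein's inequality holds only for $1<p<\infty$, precisely because its proof rests on Doob's maximal inequality, which at $p=1$ is only of weak type. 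So the predictable projection is not a bounded operator on $L^1(\Omega;L^2([0,T]))$, your stochastic integrals need not be Cauchy in $L^1(\Omega)$, and no ``maximal or duality argument of Stein--Doob type'' will produce the estimate --- indeed this obstruction is the whole reason the $\mathbb{D}_{1,1}$ extension is a separate theorem rather than a routine density argument.

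The paper (following Karatzas--Ocone--Li) restructures the argument so that no $L^1(\Omega)$ estimate on the stochastic integral is ever needed. It first invokes the martingale representation theorem for $m(t)=\mathbb{E}(F\mid\mathcal{F}_t)$ to obtain the existence of \emph{some} integrand $\phi$, so that only the identification $\phi=\mathbb{E}(D_\cdot F\mid\mathcal{F}_\cdot)$ remains. It then shows $\int_0^T|\phi_n(s)-\phi(s)|^2\,ds\to 0$ \emph{in probability} (not in $L^1$), by combining Doob's weak-type maximal inequality --- which yields $\max_{0\leq t\leq T}|m_n(t)-m(t)|\to 0$ in probability from $\mathbb{E}|F_n-F|\to 0$ --- with a good-$\lambda$/Burkholder--Gundy inequality transferring control of the maximal function to the quadratic variation. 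Separately, it shows $\phi_n\to\mathbb{E}(D_\cdot F\mid\mathcal{F}_\cdot)$ in $L^1(dt\times d\mathbb{P})$, which requires only scalar conditional Jensen plus Cauchy--Schwarz and is dominated by $\sqrt{T}\,\|F_n-F\|_{1,1}$. The two convergences together force $\phi=\mathbb{E}(D_\cdot F\mid\mathcal{F}_\cdot)$ $dt\times d\mathbb{P}$-a.e. You should replace your $L^1$-Cauchy argument for the integrals with this convergence-in-probability scheme.
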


\begin{proof} for F $\in \mathbb{D}_{1,1}$, take $\{F_n\}_{n \in \mathbb{N}} \subseteq \mathcal{P}$ where $\lim_{n \rightarrow \infty} \| F_n-F\|_{1,1} =0$. $m(t) = \mathbb{E}(F \mid \mathcal{F}_t)$ and $m_n(t) = \mathbb{E}(F_n \mid \mathcal{F}_t)$ are martingales and hence, using result \ref{itorepth} from chapter 1,

\[ m(t) = \mathbb{E}(F) + \int^t_0 \phi(s)dW_s, \]
\[ m_n(t) = \mathbb{E}(F_n) + \int^t_0 \phi_n(s)dW_s, \]

\noindent where $\phi, \phi_n$ are square integrable. Since $m_n \in \mathcal{P} \subseteq \mathbb{D}_{1,2}$, we know from result \ref{clarkocone} in Chapter 2 that $\phi_n(t) = \mathbb{E}(D_tF_n \mid \mathcal{F}_t)$.  Also, for any $\epsilon >0$, 

\begin{eqnarray}
\mathbb{P}( \max_{0 \leq t \leq T}| m_n(t) - m(t)| > \epsilon ) &\leq& \frac{1}{\epsilon} \mathbb{E}|m_n(t) - m(t)| \mbox { (Doob's martingale inequality)}\nonumber \\
&=& \frac{1}{\epsilon} \mathbb{E}|\mathbb{E}(F_n \mid \mathcal{F}_t) - \mathbb{E}(F \mid \mathcal{F}_t)|  = \frac{1}{\epsilon} \mathbb{E}|\mathbb{E}(F_n  - F \mid \mathcal{F}_t)|  \nonumber \\
&=& \frac{1}{\epsilon} \mathbb{E}|F_n  - F | \rightarrow 0 \mbox{ as } n \rightarrow \infty. 
\label{Pmax}
\end{eqnarray}

 Since $ \mathbb{E}|F_n  - F | \leq \| F_n - f \|_{1,1} \rightarrow 0$.
Additionally, the Burkholder-Gundy inequality \cite{Rogers} shows that for any $\lambda > 0$ and $\delta \in (0,1)$,

\begin{align*}
\mathbb{P}( \langle m_n - m\rangle_T > 4\lambda^2 , \max_{0 \leq t \leq T} |m_n(t) - m(t)|  \leq \delta \lambda) &\leq \delta^2 \mathbb{P}(\langle m_n-m\rangle_T > \lambda^2) 
\end{align*}
As 
\begin{align*}
\langle m_n - m\rangle_T &=  \langle \mathbb{E}(Fn-F) + \int^T_0 \phi_n(s) - \phi(s) d W_s \rangle \\
&= \int^T_0 |\phi_n(s) - \phi(s)|^2 ds,
\end{align*}

\noindent we get:

\begin{equation}
 \mathbb{P}( \int^T_0 |\phi_n(s) - \phi(s)|^2 ds > 4\lambda^2 , \max_{0 \leq t \leq T} |m_n(t) - m(t)|  \leq \delta \lambda)  =  \delta^2 \mathbb{P}(\int^T_0 |\phi_n(s) - \phi(s)|^2 ds  > \lambda^2). \nonumber \\
\end{equation}
Additionally, we have: 
\begin{align}
\mathbb{P}(\int^T_0 |\phi_n-\phi|^2(s) ds > 4 \lambda^2) = \mathbb{P}( \int^T_0 |\phi_n(s) - \phi(s)|^2 ds > 4\lambda^2 &, \max_{0 \leq t \leq T} |m_n(t) - m(t)|  \leq \delta \lambda)  \nonumber \\  
+  \mathbb{P}( \int^T_0 |\phi_n(s) - \phi(s)|^2 ds > 4\lambda^2 &, \max_{0 \leq t \leq T} |m_n(t) - m(t)|  > \delta \lambda)   \nonumber \\
\leq \delta^2 \mathbb{P}(\int^T_0 |\phi_n(s) - \phi(s)|^2 ds  > \lambda^2&) \nonumber \\
+ \mathbb{P}( \int^T_0 |\phi_n(s) - \phi(s)|^2 ds > 4\lambda^2 &, \max_{0 \leq t \leq T} |m_n(t) - m(t)|  > \delta \lambda)  \nonumber \\
\leq \delta^2 + \mathbb{P}( \max_{0 \leq t \leq T} |m_n(t) - m(t)|  > &\delta \lambda). 
\label{eqnphi}
\end{align}

Results \ref{Pmax} and \ref{eqnphi} imply that

\[ \int^T_0 |\phi_n-\phi|^2(s) ds  \rightarrow 0 \mbox{ in $\mathbb{P}$ as } n \rightarrow \infty. \]

At the same time, we have:
\begin{eqnarray}
\mathbb{E}( \int^T_0 | \phi_n(s) - \mathbb{E}( D_s F \mid \mathcal{F}_s) | ds) &=& \mathbb{E}( \int^T_0 |  \mathbb{E}( D_s F_n \mid \mathcal{F}_s) - \mathbb{E}( D_s F \mid \mathcal{F}_s) | ds) \nonumber \\
&=& \mathbb{E}( \int^T_0 |  \mathbb{E}( D_s F_n - D_s F \mid \mathcal{F}_s) | ds) \nonumber \\
&\leq& \mathbb{E}( \int^T_0 |  D_s( F_n - F) | ds) \nonumber \\
&\leq& \sqrt{T} \mathbb{E}[ (\int^T_0 |  D_s( F_n - F) |^2 ds)^{1/2}] \mbox{ (Cauchy-Schwartz)} \nonumber \\
&\equiv& \sqrt{T} \mathbb{E}[ (\sum^d_{i=1} \| D^i (F_n-F) \|^2)^{1/2}] \nonumber \\
&\leq& \sqrt{T} \| F_n - F\|_{1,1} \rightarrow 0. 
\label{dtconv}
\end{eqnarray}

\ref{eqnphi} and \ref{dtconv} together show that $\mathbb{E}(D_tF \mid{F}_t) = \phi(t)$ $dt \times d\mathbb{P}$- a.s. .

\end{proof}

Hence, Theorem \ref{ClarkOconeD11} shows that a wider class of martingales can have an explicit integral representation: $\forall (m_t) \in \mathcal{M}^2 \cap \mathbb{D}_{1,1}$, we have:
\[ m_t = \int^t_0 \mathbb{E}(D_t m_{\infty} \mid \mathcal{F}_s) dW_s. \]


\subsection{The Generalized/Girsanov Clark-ocone formula}

\subsubsection{Generalized Clark representation formula for continuous martingales}

Here, we start with the setting developed in Ocone and Karatzas \cite{Ocone}: we are on a probablility space $(\Omega,\mathcal{F},\mathbb{P})$ generated by a $\mathbb{R}^d$-B.M. Denote

\[ \mathcal{F}_t = \sigma (W_s \mid 0 \leq s \leq t)\]
the $\mathbb{P}$-augmentation of the original filtration.\\

Set the following Radon-Nikodym derivative:

\[ \frac{d\mathbb{\tilde{P}}}{d\mathbb{P}} = Z_t = e^{-\int^t_0 \theta_s dW_s - 1/2 \int^t_0 \theta^2_s ds }, \]
where $\theta(t)$ is an $\mathbb{R}^d$-valued and $\mathcal{F}_t$-measurable process. We know from the conventional Girsanov theorem that 
\begin{equation}
\tilde{W}_t = W_t + \int^t_0 \theta_s ds
\label{driftBM}
\end{equation}
is a $\tilde{\mathbb{P}}$-B.M.

Here, we borrow the concepts of Malliavin Calculus developed in chapter 2 for the Malliavin derivative and the $\mathbb{D}_{1,2}$ norm:
\[ \mathcal{P} = \big\{ F \mid F(\omega) = f(W_{t_1},...,W_{t_n}), f: \mathbb{R}^{n\times d} \rightarrow \mathbb{R} \}. \]

We then set the gradient DF$(\omega)$ = ($D^1F,...,D^dF$) using theorem \ref{BMpolynomDer}
\[ D^iF = (D_tF)^i = \sum^n_{j=1} \frac{\partial f}{\partial x_{ij}} (W_{t_1},...,W_{t_n})1_{[0,t_j]}(t) \]
for any i $\leq d$.

The closure of $\mathcal{P}$ under the following norm
\[ \| |F| \|_{1,1} =\mathbb{E}(|F| + \| D_tF\|_{L^2([0,T])}) \]

then makes a Banach space $\mathbb{D}_{1,1}$ very similar but larger than the conventional Sobolev space $\mathbb{D}_{1,2}$.\\
We know from Theorem \ref{ClarkOconeD11} that $\forall F \in \mathbb{D}_{1,1}$, we have:
\[ F = \mathbb{E}(F) + \int^T_0 \mathbb{E}(D_t F \mid \mathcal{F}_t) dW_t. \]

Here we focus on $\mathbb{D}_{1,1}$ in order to avoid adding extra constraints to the theorem that will follow. Indeed, in this theorem we will want to give an integral representation to $\mathbb{\tilde{E}}(F \mid \mathcal{F}_t)$ using the Bayes formula, and apply the Clark-Ocone formula to F$Z_T$. In $\mathbb{D}_{1,1}$, it is the case that $F\in L^2(\mathbb{\tilde{P}}) \Longrightarrow FZ_T \in L^2(\mathbb{P})$, but not in $\mathbb{D}_{1,2}$ without restrictive moment constrains on F and DF.\cite{Ocone}

\begin{thm}[{\bf Generalized Clark representation formula} \cite{Ocone}]
$\forall F \in \mathbb{D}_{1,1}$ with bounded $\theta$ such as:
\[ \mathbb{E}( |F| Z_T) < \infty, \]
\[ \mathbb{E}( \|DF\| Z_T) < \infty, \]
\[ \mathbb{E}( |F| Z_T \| \int^T_0D\theta(s)dW_s+\int^T_0D\theta(s)\cdot \theta(s)ds\| ) < \infty, \]
then $FZ_T \in \mathbb{D}_{1,1}$ and the following holds:
\[ F = \tilde{\mathbb{E}}(F) + \int^T_0 \Big[ \tilde{\mathbb{E}}(D_t F \mid \mathcal{F}_t)- \tilde{\mathbb{E}}(F \int^T_t D_t \theta(u) d \tilde{W}_u \mid \mathcal{F}_t)\Big] d\tilde{W}_t. \]  
\label{generalizedClark}
\end{thm}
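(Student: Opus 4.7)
The plan is to reduce the generalized formula to the ordinary Clark-Ocone formula in $\mathbb{D}_{1,1}$ (Theorem \ref{ClarkOconeD11}) applied to the product $FZ_T$, and then translate everything back into $\tilde{\mathbb{P}}$-language via Bayes' rule and Girsanov. First I would check that the three integrability hypotheses on $|F|Z_T$, $\|DF\|Z_T$ and the ``extra'' $D\theta$-term guarantee $FZ_T \in \mathbb{D}_{1,1}$; once this is established, Theorem \ref{ClarkOconeD11} supplies
\[
FZ_T \;=\; \mathbb{E}(FZ_T) + \int_0^T \mathbb{E}\bigl(D_t(FZ_T)\,\big|\,\mathcal{F}_t\bigr)\,dW_t
\;=\; \tilde{\mathbb{E}}(F) + \int_0^T \mathbb{E}\bigl(D_t(FZ_T)\,\big|\,\mathcal{F}_t\bigr)\,dW_t.
\]

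Next I would compute $D_t Z_T$ explicitly. Writing $Z_T = \exp\bigl(-\int_0^T \theta_s\,dW_s - \tfrac12\int_0^T \theta_s^2\,ds\bigr)$ and applying the chain rule of Malliavin calculus together with the fundamental theorem (Section 2.1), one gets
\[
D_t Z_T \;=\; -Z_T\Bigl(\theta_t + \int_t^T D_t\theta_s\,dW_s + \int_t^T \theta_s\,D_t\theta_s\,ds\Bigr),
\]
where the lower limits are $t$ because $D_t\theta_s = 0$ for $s<t$ by adaptedness. Regrouping $dW_s + \theta_s\,ds = d\tilde{W}_s$ via Girsanov (\ref{driftBM}) collapses the bracket to $\theta_t + \int_t^T D_t\theta_s\,d\tilde{W}_s$. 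The product rule then yields
\[
D_t(FZ_T) \;=\; Z_T\,D_tF \;-\; FZ_T\,\theta_t \;-\; FZ_T\int_t^T D_t\theta_s\,d\tilde{W}_s.
\]

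The third step is to convert the $\mathbb{P}$-conditional expectations into $\tilde{\mathbb{P}}$-conditional expectations using Bayes' formula $\mathbb{E}(YZ_T\mid\mathcal{F}_t) = Z_t\,\tilde{\mathbb{E}}(Y\mid\mathcal{F}_t)$. Factoring out $Z_t$ (and using that $\theta_t$ is $\mathcal{F}_t$-measurable) gives
\[
\mathbb{E}\bigl(D_t(FZ_T)\mid\mathcal{F}_t\bigr) \;=\; Z_t\Bigl[\tilde{\mathbb{E}}(D_tF\mid\mathcal{F}_t) - \theta_t\,\tilde{\mathbb{E}}(F\mid\mathcal{F}_t) - \tilde{\mathbb{E}}\Bigl(F\!\int_t^T D_t\theta_s\,d\tilde{W}_s\,\Big|\,\mathcal{F}_t\Bigr)\Bigr].
\]

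Finally I would pass from the $\mathbb{P}$-martingale $M_t = \mathbb{E}(FZ_T\mid\mathcal{F}_t) = Z_t N_t$, with $N_t := \tilde{\mathbb{E}}(F\mid\mathcal{F}_t)$, to the $\tilde{\mathbb{P}}$-martingale $N_t$ via integration by parts: writing $dN_t = \psi_t\,d\tilde{W}_t$ and $dZ_t = -Z_t\theta_t\,dW_t$, Itô gives $dM_t = Z_t(\psi_t - N_t\theta_t)\,dW_t$, and matching this with $dM_t = \mathbb{E}(D_t(FZ_T)\mid\mathcal{F}_t)\,dW_t$ forces
\[
\psi_t \;=\; \tilde{\mathbb{E}}(D_tF\mid\mathcal{F}_t) \;-\; \tilde{\mathbb{E}}\Bigl(F\!\int_t^T D_t\theta_s\,d\tilde{W}_s\,\Big|\,\mathcal{F}_t\Bigr),
\]
after the $\theta_t\,\tilde{\mathbb{E}}(F\mid\mathcal{F}_t)$ cancels. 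Integrating $dN_t = \psi_t\,d\tilde{W}_t$ from $0$ to $T$ then delivers the stated identity. The main obstacle I expect is step one: justifying that $FZ_T\in\mathbb{D}_{1,1}$ and that the Malliavin derivative of the stochastic exponential really does produce the $\int_t^T D_t\theta_s\,d\tilde{W}_s$ piece; the three finiteness hypotheses in the theorem are precisely what is needed to close each of those estimates, but this closure (and the interchange of $D_t$ with the Itô integral defining $Z_T$) has to be done by approximation using smooth cylindrical functionals, with boundedness of $\theta$ ensuring all Novikov-type tail bounds are controlled.
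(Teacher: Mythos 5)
Your proposal follows essentially the same route as the paper: establish $FZ_T\in\mathbb{D}_{1,1}$ from the three integrability hypotheses, apply the $\mathbb{D}_{1,1}$ Clark--Ocone formula to $FZ_T$, compute $D_t(FZ_T)$ via the product rule and regroup $dW_s+\theta_s\,ds=d\tilde{W}_s$, then pass to $\tilde{\mathbb{E}}(\cdot\mid\mathcal{F}_t)$ by Bayes' rule and an It\^o product-rule computation on $\Lambda_t\,\mathbb{E}(FZ_T\mid\mathcal{F}_t)$ in which the $\theta_t\,\tilde{\mathbb{E}}(F\mid\mathcal{F}_t)$ terms cancel. The only (cosmetic) difference is that in the last step you posit $dN_t=\psi_t\,d\tilde{W}_t$ and match coefficients, whereas the paper expands $d\bigl(\Lambda_t\,\mathbb{E}(FZ_T\mid\mathcal{F}_t)\bigr)$ directly and reads off the integrand, which avoids presupposing a representation of $N_t$ against $\tilde{W}$.
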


\begin{proof} For the purpose of this proof, note $Z_t = e^G$, where $G = -\int^t_0 \theta_s dW_s - 1/2 \int^t_0 \theta^2_s ds$
We can see from proposition 2.3 of \cite{Ocone}:

\begin{eqnarray*}
\| |\int^T_0 \theta_s dW_s| \|_{1,1} < \infty, \\
\| |\int^T_0 |\theta^n_s|^2 ds| \|_{1,1} < \infty. 
\end{eqnarray*}

Hence $G\in \mathbb{D}_{1,1}$. Additionally, $D \int^T_0 |\theta|^2ds = 2 \int^T_0 D \theta \cdot \theta ds $. So
\begin{eqnarray*}
\| |F Z_T| \|_{1,1} &=& \mathbb{E}( |F Z_T| + |D(FZ_T)|) \\
&=& \mathbb{E}( |F Z_T| ) + \mathbb{E}(Z_T \|DF\|+ |F| \|DZ_T\|) \\
&=& \mathbb{E}( |F Z_T| ) + \mathbb{E}(Z_T \|DF\|)+\mathbb{E}( |F| Z_T \| \int^T_0 D\theta dW_s + \int^T_0 D \theta \cdot \theta ds\|)  \\
&<& \infty \mbox{, as per conditions given in the theorem.}
\end{eqnarray*}
Hence $FZ_T \in \mathbb{D}_{1,1}$.\\
From above we know that:

\begin{eqnarray*}
D_t(FZ_T) &=& Z_T D_tF - FZ_TD_t (\int^T_0\theta dW_s + \int^T_0  \theta^2 ds) \\
&=& Z_T \big( D_tF - F(\int^T_t D_s\theta dW_s + \theta_t + D_t(\int^T_0 \theta^2 ds))\big) \\
&=& Z_T \big( D_tF - F(\int^T_t D_s\theta dW_s + \theta_t +\int^T_t D_s\theta \cdot \theta ds)\big) \\
&=&  Z_T \big( D_tF - F( \theta_t +\int^T_t D_s\theta ( dW_s + \theta ds))\big) \\
&=& Z_T \big( D_tF - F( \theta_t +\int^T_t D_s\theta d\tilde{W}_s)\big).
\end{eqnarray*}

We also know that $ \tilde{\mathbb{E}}(F \mid \mathcal{F}_t) = \frac{\mathbb{E}(FZ_T \mid \mathcal{F}_t)}{\mathbb{E}(Z_T \mid \mathcal{F}_t)}$, as a basic property of conditional expectations under change of measures. Since $Z_T$ is an $\mathbb{P}$-mg, $\mathbb{E}(Z_T \mid \mathcal{F}_t)= Z_t $. Here, note $\Lambda_t = 1/Z_t$. Then:

\begin{eqnarray*}
\tilde{\mathbb{E}}(F \mid \mathcal{F}_t) &=& \Lambda_t \mathbb{E}(FZ_T \mid \mathcal{F}_t) \\
&=& \Lambda_t \big( \mathbb{E}(FZ_T ) + \int^t_0 \mathbb{E}( D_t FZ_T\mid \mathcal{F}_s) dW_s \big) \mbox{ (result of chapter 2).} \\
\end{eqnarray*}

We recall that $d \Lambda_t = \Lambda_t \theta_t d \tilde{W}_t$. This gives:

\begin{eqnarray}
d \tilde{\mathbb{E}}(F \mid \mathcal{F}_t) &=& d [ \Lambda_t \big( \mathbb{E}(FZ_T ) + \int^t_0 \mathbb{E}( D_s FZ_T\mid \mathcal{F}_s) dW_s \big)] \nonumber \\
&=& d \Lambda_t \big( \mathbb{E}(FZ_T ) + \int^t_0 \mathbb{E}( D_s FZ_T\mid \mathcal{F}_s) dW_s \big) + \Lambda_t \big( d \mathbb{E}(FZ_T ) + d \int^t_0 \mathbb{E}( D_s FZ_T\mid \mathcal{F}_s) dW_s \big) \nonumber \\
&&+ \langle d \Lambda_t ,  d \mathbb{E}(FZ_T ) + d \int^t_0 \mathbb{E}( D_s FZ_T\mid \mathcal{F}_s) dW_s \rangle \nonumber \\
&=& \Lambda_t \theta_t \big( \mathbb{E}(FZ_T ) + \int^t_0 \mathbb{E}( D_s FZ_T\mid \mathcal{F}_s) dW_s \big) d\tilde{W}_t + \Lambda_t  \mathbb{E}( D_t FZ_T\mid \mathcal{F}_t) dW_t \nonumber \\
&&+ \mathbb{E}( D_t FZ_T\mid \mathcal{F}_t) \Lambda_t \theta_t  \langle d \tilde{W}_t ,  dW_s \rangle \nonumber \\
&=&  \theta_t \tilde{\mathbb{E}}(F \mid \mathcal{F}_t)   d\tilde{W}_t + \Lambda_t  \mathbb{E}( D_t FZ_T\mid \mathcal{F}_t) dW_t  + \mathbb{E}( D_t FZ_T\mid \mathcal{F}_t) \Lambda_t \theta_t dt \nonumber \\
&=& \big( \Lambda_t \mathbb{E}( D_t FZ_T\mid \mathcal{F}_t) + \theta_t \tilde{\mathbb{E}}(F \mid \mathcal{F}_t) \big) d\tilde{W}_t.
\label{intermediate2}
\end{eqnarray}

We also have:

\begin{eqnarray*}
\Lambda_t \mathbb{E}( D_t FZ_T\mid \mathcal{F}_t) &=& \Lambda_t \mathbb{E}(  Z_T \big( D_tF - F( \theta_t +\int^T_t D_t\theta d\tilde{W}_s)\mid \mathcal{F}_t) \\
&=& \Lambda_t \mathbb{E}(  Z_TD_tF\mid \mathcal{F}_t) - \theta_t \Lambda_t \mathbb{E}(  Z_TF\mid \mathcal{F}_t) - \Lambda_t \mathbb{E}(  Z_TF\int^T_t D_t\theta d\tilde{W}_s\mid \mathcal{F}_t) \\
&=& \tilde{\mathbb{E}}( D_tF\mid \mathcal{F}_t)-\theta_t \tilde{\mathbb{E}}(  F\mid \mathcal{F}_t)-\tilde{\mathbb{E}}(  F\int^T_t D_t\theta d\tilde{W}_s\mid \mathcal{F}_t). 
\end{eqnarray*}

Hence result \ref{intermediate2} turns into:
\begin{equation}
d \tilde{\mathbb{E}}(F \mid \mathcal{F}_t) = \big[ \tilde{\mathbb{E}}( D_tF\mid \mathcal{F}_t)- \tilde{\mathbb{E}}(  F\int^T_t D_t\theta d\tilde{W}_s\mid \mathcal{F}_t) \big] d\tilde{W}_t.
\label{intermediate3}
\end{equation}

Result \ref{intermediate3} clearly shows that $\tilde{\mathbb{E}}(F \mid \mathcal{F}_t)$ is a $\tilde{\mathbb{P}}$-mg. Therefore:
\[ F - \tilde{\mathbb{E}}(F)  = \int^T_0 \big[  \tilde{\mathbb{E}}( D_tF\mid \mathcal{F}_s)- \tilde{\mathbb{E}}(  F\int^T_s D_t\theta d\tilde{W}_u\mid \mathcal{F}_s) \big] d\tilde{W}_s. \]

\end{proof}

Hence, for all $\mathbb{P}$-martingales $(m_t) \in \mathcal{M}^2 \cap \mathbb{D}_{1,1}$, we can maintain an explicit representation even under various changes of measures: provided $m_{\infty} < \infty$ where $\mathbb{E}(m_{\infty} \mid \mathcal{F}_t) = m_t$, we have:
\begin{eqnarray*}
m_t &=& \mathbb{E}(m_{\infty} \mid \mathcal{F}_t) \\
&=& \mathbb{E}(  \tilde{\mathbb{E}}(m_{\infty})  + \int^T_0 \big[  \tilde{\mathbb{E}}( D_t m_{\infty}\mid \mathcal{F}_s)- \tilde{\mathbb{E}}(  m_{\infty} \int^T_s D_t\theta d\tilde{W}_u\mid \mathcal{F}_s) \big] d\tilde{W}_s \mid \mathcal{F}_t)  \\
&=&   \tilde{\mathbb{E}}(m_{\infty}) + \mathbb{E}(  \int^T_0 \big[  \tilde{\mathbb{E}}( D_t m_{\infty}\mid \mathcal{F}_s)- \tilde{\mathbb{E}}(  m_{\infty} \int^T_s D_t\theta d\tilde{W}_u\mid \mathcal{F}_s) \big] d\tilde{W}_s \mid \mathcal{F}_t)\\
&=&  \tilde{\mathbb{E}}(m_{\infty})+ \int^t_0 \big[  \tilde{\mathbb{E}}( D_t m_{\infty}\mid \mathcal{F}_s)- \tilde{\mathbb{E}}(  m_{\infty} \int^T_s D_t\theta d\tilde{W}_u\mid \mathcal{F}_s) \big] dW_s  \\
&&+ \int^T_0 \big[\tilde{\mathbb{E}}( D_t m_{\infty}\mid \mathcal{F}_s)- \tilde{\mathbb{E}}(  m_{\infty} \int^T_s D_t\theta d\tilde{W}_u\mid \mathcal{F}_s) \big] \theta_s ds \\
&=& \tilde{\mathbb{E}}(m_{\infty})+ \int^t_0 \big[  \tilde{\mathbb{E}}( D_t m_{\infty}\mid \mathcal{F}_s)- \tilde{\mathbb{E}}(  m_{\infty} \int^T_s D_t\theta d\tilde{W}_u\mid \mathcal{F}_s) \big] d\tilde{W}_s \\
&&+\int^T_t \big[\tilde{\mathbb{E}}( D_t m_{\infty}\mid \mathcal{F}_s)- \tilde{\mathbb{E}}(  m_{\infty} \int^T_s D_t\theta d\tilde{W}_u\mid \mathcal{F}_s) \big] \theta_s ds.
\end{eqnarray*}

 Note: We cannot directly apply the classical Clark-Ocone formula developed in Chapter 2 to obtain an integral representation with respect to $\tilde{W}_t$, as $m_t \in \mathcal{M}^2 \cap \mathbb{D}_{1,1}$ is not necessarily $\tilde{\mathcal{F}}_t$-adapted. $\tilde{\mathcal{F}}_t$ is generated by $\tilde{W}_t$ for any t in [0,T], and very often it is the case that $\tilde{\mathcal{F}}_T \subset \mathcal{F}_T$. \cite{Oksendal2}

\subsubsection{Generalized Clark representation formula for L$\acute{e}$vy and pure jump processes}

The result investigated in section 1.1 can be extended beyond continuous martingales and the B.M. driving them. Indeed, It is possible to prove that it is also applicable to general L$\acute{e}$vy processes of the type explored in Chapter 1 and 2.\\
We work on the filtration generated by a L$\acute{e}$vy process  $L_t$ such as, $\forall (m_t) \in \mathcal{M}^2 \cap \mathbb{D}_{1,2}$, we have:
\[ m_t = \int^t_0 \mathbb{E}(D_s m_{\infty} \mid \mathcal{F}_s) dW_s +  \int^t_0 \int_{\mathbb{R}_0} \mathbb{E}(D_{s,z} m_{\infty} \mid \mathcal{F}_s) \tilde{N}(ds,dz).\]

Again, we perform a change of measure as in Nunno et al. \cite{Oksendal3}:
\begin{eqnarray*}
\frac{d \mathbb{Q}}{d \mathbb{P}} = Z_t &=&\exp \Big\{ - \int^t_0 u_s dW_s - \int^t_0 u^2_sds \\
&& +\int^T_0 \int_{\mathbb{R}_0} \log(1-\theta(s,x)) + \theta(s,x) \upsilon(dx) ds \\
&& +\int^T_0 \int_{\mathbb{R}_0} \log(1-\theta(s,x)) \tilde{N}(ds,dx) \Big\}
\end{eqnarray*}
where $\theta(s,x) \leq 1$ for s$\in [0,T]$ , x $\in \mathbb{R}_0$ and $u_s$ is a $\mathbb{F}$-predictable process.

\begin{thm}[{\bf Generalized Clark-Ocone theorem for L$\acute{e}$vy processes} \cite{Oksendal3}]
 For all F $\in L^2(\mathbb{P}) \cap  L^2(\mathbb{Q})$ where F is $\mathcal{F}_T$-measurable and where:
\begin{itemize}
\item $\theta \in L^2(\mathbb{P} \times \lambda \times \upsilon)$,
\item $D_{t,x} \theta$ is Skorohod integrable,
\end{itemize}
the below representation holds

\begin{eqnarray*}
 F &=& \mathbb{E}_{\mathbb{Q}}(F) + \int^T_0 \mathbb{E}_{\mathbb{Q}}(D_t F - F \int^T_t D_t u_s dW^Q_s \mid \mathcal{F}_t) d W^Q_t \\
&& +\int^T_0 \int_{\mathbb{R}_0}\mathbb{E}_{\mathbb{Q}}(F (\tilde{H}-1)+\tilde{H}D_{t,x}F \mid \mathcal{F}_t) d \tilde{N}^Q(dt,dx).
\end{eqnarray*}

Here:
\begin{eqnarray*}
\tilde{H} &=& \exp \Big\{ \int^t_0 \int_{\mathbb{R}_0} [ D_{t,x} \theta(s,z) + \log(1-\frac{D_{t,x} \theta(s,z)}{1-\theta(s,z)})(1-\theta(s,z))] \upsilon(dz)ds \\
&& + \log(1-\frac{D_{t,x} \theta(s,z)}{1-\theta(s,z)})\tilde{N}_Q(ds,dz) \Big\}, \\
\tilde{N}_Q(ds,dz)  &=& \theta(t,x)\upsilon(dx)dt + \tilde{N}(ds,dz), \\
d W^Q_t &=& u_tdt + dW_t. \\ 
\end{eqnarray*}
\label{generalizedClarkLevy}
\end{thm}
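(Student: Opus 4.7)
The plan is to mirror the strategy of Theorem~\ref{generalizedClark}: verify that $FZ_T$ lies in $\mathbb{D}_{1,2}$, apply the ordinary L\'evy Clark-Ocone formula (Theorem~\ref{general-clark-ocone}) to $FZ_T$ under $\mathbb{P}$, and then push the representation to $\mathbb{Q}$ using Bayes' formula together with the jump-diffusion It\^o product rule, in complete analogy with the passage from equation~\ref{intermediate2} to equation~\ref{intermediate3} in the Brownian proof. The genuinely new content beyond the continuous case is the appearance of the multiplicative factor $\tilde H$, which must be identified as the Poisson-Malliavin derivative of $\log Z_T$.

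First I would compute $D_t Z_T$ and $D_{t,x} Z_T$. For the continuous part, applying $D_t$ to each piece of the exponent of $Z_T$ and regrouping $dW_s + u_s\,ds = dW^Q_s$ and $\tilde N(ds,dz) + \theta\,\upsilon(dz)\,ds = \tilde N_Q(ds,dz)$ produces an expression in which the term $-F\int_t^T D_t u_s\,dW^Q_s$ of the statement emerges while the remaining pieces are absorbed into $\tilde H$. For the Poisson part, the chain rule $D_{t,x}\phi(F) = \phi(F+D_{t,x}F) - \phi(F)$ applied to the exponential yields $D_{t,x}Z_T = Z_T(\tilde H - 1)$, where $\tilde H$ is precisely the exponential of $D_{t,x}(\log Z_T)$ written out in the statement. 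The product rules then give the two derivatives of $FZ_T$; in particular the additive Poisson product rule $D_{t,x}(FG) = F D_{t,x}G + G D_{t,x}F + D_{t,x}F \cdot D_{t,x}G$ yields $D_{t,x}(FZ_T) = Z_T\bigl(F(\tilde H - 1) + \tilde H\,D_{t,x}F\bigr)$, which is exactly the integrand in the $\tilde N^Q$ part of the target formula. The three integrability hypotheses on $F$, $\theta$ and $D_{t,x}\theta$ are precisely what is needed to place $FZ_T$ in $\mathbb{D}_{1,2}$.

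Next I would apply Theorem~\ref{general-clark-ocone} to $FZ_T$, set $\Lambda_t = 1/Z_t$, and use Bayes' formula $\mathbb{E}_{\mathbb{Q}}(F\mid\mathcal{F}_t) = \Lambda_t\,\mathbb{E}(FZ_T\mid\mathcal{F}_t)$. Differentiating the right-hand side by the jump-diffusion It\^o product rule, substituting $dW = dW^Q - u\,dt$ and $\tilde N = \tilde N_Q - \theta\,\upsilon\,dt$, and invoking the $\mathbb{Q}$-martingale property of $\mathbb{E}_{\mathbb{Q}}(F\mid\mathcal{F}_t)$ to force the finite-variation parts to cancel, one reads off the $dW^Q$ and $\tilde N_Q$ integrands and matches them term by term to the statement.

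The main obstacle I expect is the Poisson step of the Malliavin computation. Unfolding $D_{t,x}\log Z_T$ from a composite exponent that contains both a deterministic $\upsilon$-integral of $\log(1-\theta)+\theta$ and a stochastic $\tilde N$-integral of $\log(1-\theta)$, and then regrouping the resulting pieces into the $\tilde N_Q$- and $\upsilon\,dt$-integrals that define $\tilde H$, is bookkeeping-intensive, and any slip propagates into both the $(\tilde H - 1)$ prefactor and the $\tilde H$ coefficient of $D_{t,x}F$ in the final representation. Once $\tilde H$ is correctly identified and $FZ_T$ is placed in $\mathbb{D}_{1,2}$, the remainder of the argument is a routine L\'evy analogue of the Brownian derivation of Theorem~\ref{generalizedClark}.
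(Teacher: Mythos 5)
Note first that the paper does not actually prove Theorem~\ref{generalizedClarkLevy}: it is stated with a citation to Di Nunno, {\O}ksendal and Proske and then immediately applied to martingales, so there is no in-paper proof to compare yours against. Judged on its own terms, your outline is the standard one and is consistent with the paper's proof of the Brownian case (Theorem~\ref{generalizedClark}), which you correctly take as the template: compute the two Malliavin derivatives of $FZ_T$, apply the L\'evy Clark--Ocone formula of Theorem~\ref{general-clark-ocone} to $FZ_T$ under $\mathbb{P}$, and transfer to $\mathbb{Q}$ via $\tilde{\mathbb{E}}(F\mid\mathcal{F}_t)=\Lambda_t\,\mathbb{E}(FZ_T\mid\mathcal{F}_t)$ and the jump-diffusion It\^o product rule, exactly as in the passage from \ref{intermediate2} to \ref{intermediate3}. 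Your identification of the jump factor is the right key step: with $Z_T=e^G$ the Poisson chain rule $D_{t,x}\phi(F)=\phi(F+D_{t,x}F)-\phi(F)$ gives $D_{t,x}Z_T=Z_T\bigl(e^{D_{t,x}G}-1\bigr)=Z_T(\tilde H-1)$, and the product rule $D_{t,x}(FZ_T)=FD_{t,x}Z_T+Z_TD_{t,x}F+D_{t,x}F\,D_{t,x}Z_T$ collapses to $Z_T\bigl(F(\tilde H-1)+\tilde H D_{t,x}F\bigr)$, which is precisely the $\tilde N^Q$-integrand in the statement.

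Two caveats. First, your claim that the listed hypotheses are ``precisely what is needed to place $FZ_T$ in $\mathbb{D}_{1,2}$'' is too optimistic: as stated, the theorem only assumes $F\in L^2(\mathbb{P})\cap L^2(\mathbb{Q})$, yet $D_tF$ and $D_{t,x}F$ appear in the conclusion, so $F\in\mathbb{D}_{1,2}$ (and integrability of $Z_T D F$, $F D Z_T$ and the cross term) must be assumed or established separately; this is a gap in the paper's own statement that your proof would have to repair rather than inherit. Second, the transfer step needs the Girsanov theorem for Poisson random measures (that $\tilde N_Q$ is the compensated measure under $\mathbb{Q}$ and that $\Lambda_t=1/Z_t$ satisfies an SDE driven by both $W^Q$ and $\tilde N_Q$); the paper develops this only for the Brownian component, so you should state and use the jump Girsanov theorem explicitly before the finite-variation cancellation argument can be invoked. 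With those two points made precise, the outline is sound.
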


As in section 1.1 of this chapter, we can apply \ref{generalizedClarkLevy} to martingales: If $m_{\infty} < \infty$ and $m_{\infty} \in L^2(\mathbb{P}) \cap L^2(\mathbb{Q})$, then:

\begin{eqnarray*}
m_{\infty} &=&  \mathbb{E}_{\mathbb{Q}}(m_{\infty}) + \int^T_0 \mathbb{E}_{\mathbb{Q}}(D_t m_{\infty} - m_{\infty} \int^T_t D_t u_s dW^Q_s \mid \mathcal{F}_t) d W^Q_t \\
&& +\int^T_0 \int_{\mathbb{R}_0}\mathbb{E}_{\mathbb{Q}}(m_{\infty} (\tilde{H}-1)+\tilde{H}D_{t,x}m_{\infty} \mid \mathcal{F}_t) d \tilde{N}^Q(dt,dx), \\
m_t &=& \mathbb{E}_{\mathbb{P}}( m_{\infty} \mid \mathcal{F}_t) \\
&=& \mathbb{E}_{\mathbb{P}}(  \mathbb{E}_{\mathbb{Q}}(m_{\infty}) + \int^T_0 \mathbb{E}_{\mathbb{Q}}(D_t m_{\infty} - m_{\infty} \int^T_t D_t u_s dW^Q_s \mid \mathcal{F}_t) d W^Q_t \\
&& +\int^T_0 \int_{\mathbb{R}_0}\mathbb{E}_{\mathbb{Q}}(m_{\infty} (\tilde{H}-1)+\tilde{H}D_{t,x}m_{\infty} \mid \mathcal{F}_t) d \tilde{N}^Q(dt,dx) \mid \mathcal{F}_t) \\
&=& \mathbb{E}_{\mathbb{Q}}(m_{\infty}) +  \int^t_0 \mathbb{E}_{\mathbb{Q}}(D_t m_{\infty} - m_{\infty} \int^T_s D_t u_a dW^Q_a \mid \mathcal{F}_s) d W^Q_s  \\
&&+ \int^T_t \mathbb{E}_{\mathbb{Q}}(D_t m_{\infty} - m_{\infty} \int^T_s D_t u_a dW^Q_a \mid \mathcal{F}_s) u_s d s\\
&&+ \int^t_0 \int_{\mathbb{R}_0}\mathbb{E}_{\mathbb{Q}}(m_{\infty} (\tilde{H}-1)+\tilde{H}D_{t,x}m_{\infty} \mid \mathcal{F}_s) d \tilde{N}^Q(ds,dx) \\
&&+ \int^T_t \int_{\mathbb{R}_0}\mathbb{E}_{\mathbb{Q}}(m_{\infty} (\tilde{H}-1)+\tilde{H}D_{t,x}m_{\infty} \mid \mathcal{F}_s) d \theta(s,x) \upsilon(dx).
\end{eqnarray*}

\subsection{Enlargement of filtration: Poisson filtrations and insider trading (Wright et al. \cite{Wright})}

So far we have reviewed various ways to adapt the MRT and the Clark-Ocone formula under a change of probability measure. But the requirements of Financial Mathematics also lead towards the eventuality of changing the base filtration $\mathcal{F}$ which we work on. Doing so impacts on the representation of the stochastic processes driving the probability space, and hence the MRT. \\
We start with a conventional probability space $(\Omega, \mathcal{F}, \mathbb{P})$ where $\{\mathcal{F}_t\}_{t \leq 1}$ represents the regular information flow generated by a B.M. $\in \mathbb{R}^d$. Here, the time frame is t$ \in [0,1]$.Then comes a $\mathcal{F}_1$-measurable r.v. L that carries extra information. The agent who possesses that extra info L is otherwise known as an "insider" \cite{Wright} and their knowledge is represented by the enlarged filtration:
\[ \mathcal{G}_t = \mathcal{F}_t \vee \sigma(L) \mbox{, } \forall t \in [0,1]. \]

The $\mathcal{G}_t$-B.M. $\tilde{W}_t$ can be represented as such:
\[ W_t = \tilde{W}_t + \int^t_0 \mu^L_sds. \]

There is such a drift $\mu^L_s$ when what is known as "Jacod's Condition" \cite{Jacod} is satisfied:
\[ \mbox{ The regular conditional distributions of L given } \mathcal{F}_t \]
\[ \mbox{ are absolutely continuous with respect to the law of L } \forall t \in [0,1) \]
Then we can re-write the MRT as such:

\[ \forall m \in \mathcal{M}^2 \cap \mathbb{D}_{1,1}, m_t = \int^t_0 \mathbb{E}(D_s m_{\infty} \mid \mathcal{F}_s) d\tilde{W}_t + \mu^L_s ds \mbox{ , } \forall t \in [0,1]. \]

But how do we find an explicit form for $ \mu^L_s$? As developed in Wright et al.\cite{Wright}, by identifying the integral representation of $\mathbb{P}( L \in dx \mid \mathcal{F}_t)$ explicitly using the Clark-Ocone formula, we can get an expression for the drift $\mu^L_s$.
However, it is clear that $\mathbb{P}( L \in dx \mid \mathcal{F}_t)$  is not a simple random variable but a measure-valued random
variable. Therefore we need to re-create a measure-valued Poisson Malliavin calculus with its own Clark-Ocone-type formula  in order to make sense of an integral representation of $\mathbb{P}( L \in dx \mid \mathcal{F}_t)$. \cite{Wright}

\subsubsection{Use of Poisson-malliavin calculus and Clark-Ocone formula}

We work on a Poisson space $(\mathcal{B},\mathcal{F},\mathcal{P})$ defined as follow:

\begin{defn}[{\bf Poisson space}]
A poisson space is a triple $(\mathcal{B},\mathcal{F},\mathcal{P})$ where:
\begin{itemize}
\item $\mathcal{B}$ is a sequence space,
\item $\mathcal{P}$ is a probability measure under which $\tau_k : \mathcal{B} \rightarrow \mathbb{R}$ forms a sequence of i.i.d exponentially distributed,
\item $\mathcal{F}$ is the $\sigma$-field generated by  $\mathcal{B}$
\end{itemize}
\end{defn}

The n-th jump time of the Poisson process, $T_n$ , is then derived by:
\[ T_n = \sum^{n-1}_{k=0} \tau_k, \]

and the Poisson process itself, $N_t$, by:
\[ N_t(\omega) = \sum^{\infty}_{k=1} 1_{[T_k(\omega),\infty)}(t)\]
for any t in $\mathbb{R}$.
$(\mathcal{F}_t)_{t \geq 0}$ is the filtration generated by $(N_t)_{t \geq 0}$.

Define the set $\mathcal{S}$:
\[ \mathcal{S} = \{ F = f(T_1,...,T_n) \mid f \in C^{\infty}(\mathbb{R}^n) \forall n \geq 1\}, \]

and the closable linear operator $D^{\mathbb{R}}: L^2(\mathcal{B}) \rightarrow L^2(\mathcal{B} \times \mathbb{R}_+)$ for all F$\in \mathcal{S}$:
\[ D^{\mathbb{R}}_t F = - \sum^n_{k=1} 1_{[0,T_k]}(t) \partial_kf(T_1,...,T_n) \]

whenever t is in $\mathbb{R}_+$, the set of positive real numbers. We then extend $\mathcal{S}$ in Dom D $\subseteq L^2(\mathbb{B})$ with respect to:
\[ \|F\|_{L^2(\mathcal{B})} + \|DF\|_{L^2(\mathcal{B} \times \mathbb{R}_+)} \mbox{ , } F \in \mathcal{S}. \]

 We now introduce an important isomorphism $\Phi$ as:
\[ \Phi : \mathbb{M} \rightarrow \mathbb{R}^{\mathbb{N}}, \]
\[ \Phi(\mu) = ( \langle \mu, f_i \rangle)_{i \in \mathbb{N}} = ( \int_{\mathbb{R}} f_i d\mu)_{i \in \mathbb{N}}.  \]

Here, $\mathbb{M}$ denotes a space of measures:
\[ \mathbb{M} = \{ \mu \mid \mu: \mbox{ signed measure on } (\mathbb{R},\mathcal{B}) \}. \]

We define  $\mathcal{S}(\mathbb{M})$ and $ D^{\mathbb{M}}: \mathcal{S}(\mathbb{M}) \rightarrow  L^2(\mathcal{B} \times \mathbb{R}_+, \mathbb{M})$ in a very similar way to above:
\[ \mathcal{S}(\mathbb{M}) = \{ F = g(T_1,...,T_n,x)dx \mid g \in C^{\infty}(\mathbb{R}^{n+1}) \forall n \geq 1\}, \]
\[ D^{\mathbb{M}}_t F = - \sum^n_{k=1} 1_{[0,T_k]}(t) \partial_kg(T_1,...,T_n,x)dx. \]

As we have done in chapter 2, we can introduce a norm on $\mathcal{S}(\mathbb{M})$:
\[ \|F\|^{\mathbb{M}}_{1,2} = \mathbb{E}(|F|^2)^{1/2} + \mathbb{E}(\| |D^{\mathbb{M}}F| \|^2_2)^{1/2}, \]
and set $\mathbb{D}_{1,2}(\mathbb{M})$ as the closure of $\mathcal{S}(\mathbb{M})$ with respect to $\|\cdot\|^{\mathbb{M}}_{1,2}$.

\begin{thm}[Proposition 1 \cite{Wright}]
$ \forall F \in \mathbb{D}_{1,2}(\mathbb{M})$ and $f \in C_b(\mathbb{R})$, we have:
\begin{itemize}
\item $\langle F, f\rangle \in \mathbb{D}_{1,2}(\mathbb{M})$, and
\item $\langle D^{\mathbb{M}}_t F,f\rangle = D^{\mathbb{R}}_t \langle F,f\rangle$.
\end{itemize}
Additionally - Proposition 2 \cite{Wright} - for $F \in \mathbb{D}_{1,2}(\mathbb{M})$, we have:
\[ D^{\mathbb{M}} F = \Phi^{-1}(( D^{\mathbb{R}} \langle F,f_i \rangle)_{i \in \mathbb{N}}). \]
\label{prop1}
\end{thm}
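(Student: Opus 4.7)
The plan is to verify both identities first on the simple core $\mathcal{S}(\mathbb{M})$ by a direct computation, and then to lift them to all of $\mathbb{D}_{1,2}(\mathbb{M})$ using the closability of $D^{\mathbb{R}}$ together with continuity of the pairing $F \mapsto \langle F, f\rangle$ against bounded test functions $f \in C_b(\mathbb{R})$.

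For the core step, I would start with $F = g(T_1,\ldots,T_n,x)\,dx \in \mathcal{S}(\mathbb{M})$ and observe that $\langle F, f\rangle = h(T_1,\ldots,T_n)$, where $h(t_1,\ldots,t_n) := \int_{\mathbb{R}} g(t_1,\ldots,t_n,x)f(x)\,dx$ is smooth by differentiation under the integral (using $f \in C_b(\mathbb{R})$ together with the decay of $g$), so $\langle F, f\rangle \in \mathcal{S} \subseteq \mathbb{D}_{1,2}$. From the definition of $D^{\mathbb{R}}$ one gets $D^{\mathbb{R}}_t \langle F, f\rangle = -\sum_{k=1}^n 1_{[0,T_k]}(t)\, \partial_k h(T_1,\ldots,T_n)$; swapping derivative and integral inside the pairing with $f$ identifies this with $\langle D^{\mathbb{M}}_t F, f\rangle$, establishing the commutation relation on the core.

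For the extension, I would take $F_n \in \mathcal{S}(\mathbb{M})$ with $\|F_n - F\|_{1,2}^{\mathbb{M}} \to 0$ and propagate the identity through the limit. The crucial ingredient is that $F \mapsto \langle F, f\rangle$ is continuous from $\mathbb{D}_{1,2}(\mathbb{M})$ into $\mathbb{D}_{1,2}$, which reduces to an estimate of the form $|\langle \mu, f\rangle| \leq \|f\|_\infty \cdot \|\mu\|_{\mathbb{M}}$ for the norm on $\mathbb{M}$ implicit in the definition of $\mathbb{D}_{1,2}(\mathbb{M})$. Granted this, we obtain $\langle F_n, f\rangle \to \langle F, f\rangle$ in $L^2(\mathcal{B})$ and $\langle D^{\mathbb{M}} F_n, f\rangle \to \langle D^{\mathbb{M}} F, f\rangle$ in $L^2(\mathcal{B} \times \mathbb{R}_+)$. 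Combined with the core identity $D^{\mathbb{R}} \langle F_n, f\rangle = \langle D^{\mathbb{M}} F_n, f\rangle$, closability of $D^{\mathbb{R}}$ forces $\langle F, f\rangle \in \mathbb{D}_{1,2}$ and $D^{\mathbb{R}}_t \langle F, f\rangle = \langle D^{\mathbb{M}}_t F, f\rangle$. The main obstacle is pinning down the norm on $\mathbb{M}$ underlying $\|\cdot\|_{1,2}^{\mathbb{M}}$ and ensuring that the family $(f_i)$ controls it (e.g., is a norming sequence for total variation), since the continuity of the pairing is what drives the entire closability extension.

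The second assertion then follows immediately from the first: applying it coordinate-wise with $f = f_i$ yields $\langle D^{\mathbb{M}}_t F, f_i\rangle = D^{\mathbb{R}}_t \langle F, f_i\rangle$ for every $i \in \mathbb{N}$, so by the definition of $\Phi$ we have $\Phi(D^{\mathbb{M}}_t F) = (D^{\mathbb{R}}_t \langle F, f_i\rangle)_{i \in \mathbb{N}}$, and applying $\Phi^{-1}$ on both sides gives $D^{\mathbb{M}} F = \Phi^{-1}\bigl((D^{\mathbb{R}} \langle F, f_i\rangle)_{i \in \mathbb{N}}\bigr)$.
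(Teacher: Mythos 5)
The paper does not actually prove this statement: it is imported verbatim from Wright et al.\ (Propositions 1 and 2 of \cite{Wright}) and stated without argument, so there is no in-paper proof to compare yours against. Your proposal is the natural argument and is essentially sound: verify the commutation identity on the cylindrical core $\mathcal{S}(\mathbb{M})$ by differentiating $h(T_1,\dots,T_n)=\int_{\mathbb{R}}g(T_1,\dots,T_n,x)f(x)\,dx$ under the integral sign, then pass to general $F\in\mathbb{D}_{1,2}(\mathbb{M})$ by density, using the bound $|\langle \mu,f\rangle|\le \|f\|_\infty\,\|\mu\|_{\mathrm{TV}}$ to get continuity of the pairing and the closability of $D^{\mathbb{R}}$ to identify the limit; the second assertion is then just the first applied coordinate-wise to the separating family $(f_i)$ followed by $\Phi^{-1}$. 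The one caveat you flag yourself is the right one: the paper never specifies what $|F|$ means in the norm $\|F\|^{\mathbb{M}}_{1,2}=\mathbb{E}(|F|^2)^{1/2}+\mathbb{E}(\||D^{\mathbb{M}}F|\|_2^2)^{1/2}$, and your estimate needs it to dominate the pairing against bounded continuous $f$ (total variation does the job, and is the intended reading in \cite{Wright}). Note also that the first bullet of the statement as printed ($\langle F,f\rangle\in\mathbb{D}_{1,2}(\mathbb{M})$) is a typo for the scalar domain $\mathrm{Dom}\,D^{\mathbb{R}}$; you silently and correctly read it that way.
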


\begin{thm}[Proposition 3 \cite{Wright}]
For $F_t$ adapted such as:
\[ \sup_{\| f \| \leq 1, f \in C_b(\mathbb{R})} \mathbb{E}\big[ \int^{\infty}_0 \langle F_t,f\rangle^2 dt \big] < \infty, \]
we have:
\[ \langle \int^{.}_0 F_t d\tilde{N}_t,f \rangle =  \int^{.}_0 \langle  F_t, f \rangle d\tilde{N}_t. \]
\label{prop3}
\end{thm}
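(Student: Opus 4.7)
The plan is to follow the standard It\^o-integration recipe, carried out on real-valued pairings rather than on measures directly, and then transport the result back to $\mathbb{M}$ through the isomorphism $\Phi$ from Proposition \ref{prop1}. The integrability hypothesis $\sup_{\|f\|\le 1}\mathbb{E}\big[\int_0^{\infty}\langle F_t,f\rangle^2\,dt\big]<\infty$ is tailor-made to ensure that, for every fixed test function $f \in C_b(\mathbb{R})$ with $\|f\|\le 1$, the scalar process $t\mapsto \langle F_t,f\rangle$ is adapted and square integrable on $\mathcal{B}\times\mathbb{R}_+$, so the ordinary compensated Poisson integral $\int_0^{\cdot}\langle F_t,f\rangle\, d\tilde N_t$ is well defined in the classical sense.

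First I would prove the identity for simple measure-valued processes, i.e.\ $F_t=\sum_{k=0}^{n-1} G_k\,\mathbf{1}_{(t_k,t_{k+1}]}(t)$, where each $G_k$ is an $\mathcal{F}_{t_k}$-measurable element of $\mathbb{M}$ with appropriate integrability. Here both sides reduce to the finite sum $\sum_k G_k(\tilde N_{t_{k+1}}-\tilde N_{t_k})$, and applying the linear functional $\langle\,\cdot\,,f\rangle$ distributes termwise across this sum. Hence $\bigl\langle \int_0^{\cdot} F_t\,d\tilde N_t,\,f\bigr\rangle=\int_0^{\cdot}\langle F_t,f\rangle\,d\tilde N_t$ is immediate from the linearity of the pairing $\mu\mapsto\langle\mu,f\rangle$.

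Next, I would approximate a general $F$ by simple processes $F^{(n)}$ in the norm $\|F\|_{*}^2 := \sup_{\|f\|\le 1}\mathbb{E}\bigl[\int_0^{\infty}\langle F_t,f\rangle^2\,dt\bigr]$. The measure-valued integral $\int_0^{\cdot} F_t\,d\tilde N_t$ is defined coordinatewise via $\Phi^{-1}\bigl((\int_0^{\cdot}\langle F_t,f_i\rangle\,d\tilde N_t)_{i\in\mathbb{N}}\bigr)$, consistently with Proposition \ref{prop1}. For each fixed $f$ with $\|f\|\le 1$, the scalar It\^o isometry gives $\mathbb{E}\bigl[\bigl(\int_0^{\infty}\langle F_t-F_t^{(n)},f\rangle\,d\tilde N_t\bigr)^2\bigr] = \mathbb{E}\bigl[\int_0^{\infty}\langle F_t-F_t^{(n)},f\rangle^2\,dt\bigr]\to 0$, so both sides converge in $L^2(\mathbb{P})$. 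Since the identity holds at every $n$, it passes to the limit, first for the separating family $\{f_i\}$ and then, by continuity of the pairing and the density expressed through $\Phi$, for every $f\in C_b(\mathbb{R})$ with $\|f\|\le 1$; the general $f$ follows by homogeneity.

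The main technical obstacle I anticipate lies not in the algebra but in confirming that the coordinate-wise construction $\Phi^{-1}\bigl((\int_0^{\cdot}\langle F_t,f_i\rangle\,d\tilde N_t)_{i\in\mathbb{N}}\bigr)$ actually yields a bona fide signed measure in $\mathbb{M}$ rather than only an element of $\mathbb{R}^{\mathbb{N}}$. This is essentially a consistency/regularity check: one needs that the random sequence produced lies in the image of $\Phi$ almost surely, which relies on the separating character of the family $(f_i)$ together with the uniform $\|\cdot\|_{*}$-control inherited from the hypothesis. Once this is in place, the commutation $\bigl\langle \int_0^{\cdot}F_t\,d\tilde N_t,f\bigr\rangle=\int_0^{\cdot}\langle F_t,f\rangle\,d\tilde N_t$ is forced by construction on $\{f_i\}$ and extends to $C_b(\mathbb{R})$ by the continuity statement in Proposition \ref{prop1}.
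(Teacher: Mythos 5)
The paper itself offers no proof of this statement: Theorem \ref{prop3} is quoted verbatim from Wright et al.\ \cite{Wright} and used as a black box in the proof of Theorem \ref{wrighttheor1}, so there is no in-paper argument to compare yours against. Judged on its own terms, your outline is the natural one --- prove the commutation for simple measure-valued integrands by linearity of $\mu \mapsto \langle \mu, f\rangle$, pass to the limit via the It\^o isometry for the compensated Poisson integral, and extend from the separating family $(f_i)$ to all of $C_b(\mathbb{R})$ using the uniform bound $\sup_{\|f\|\le 1}\mathbb{E}\big[\int_0^\infty \langle F_t,f\rangle^2\,dt\big]<\infty$ exactly as the paper later uses $\|F\|_1$ to pass from $f_i$ to $f$ in equation \ref{wrightcondexp}. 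This is consistent with the machinery the paper does set up (Theorems \ref{prop1} and the conditional-expectation construction via $\Phi$).

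The one genuine soft spot is that your proposal runs two different definitions of $\int_0^{\cdot} F_t\, d\tilde N_t$ in parallel without reconciling them: in the approximation step you treat it as an $L^2$-limit of integrals of simple $\mathbb{M}$-valued processes, while in the final paragraph you declare it to be $\Phi^{-1}\bigl((\int_0^{\cdot}\langle F_t,f_i\rangle\, d\tilde N_t)_{i}\bigr)$ by fiat. Under the second definition the identity is true for the $f_i$ by construction and the entire content of the proposition is (i) that this $\mathbb{R}^{\mathbb{N}}$-valued object lies in $\Phi(\mathbb{M})$ almost surely, and (ii) that the identity survives the passage from $(f_i)$ to arbitrary $f\in C_b(\mathbb{R})$. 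You correctly flag (i) as the main obstacle but leave it unresolved; since $\Phi$ is only asserted to be an isomorphism onto its image, producing a random sequence of pairings does not by itself produce a random signed measure, and the uniform $L^2$ hypothesis is precisely what must be spent (via a total-variation or Riesz-representation argument on the limit functional $f \mapsto \int_0^{\cdot}\langle F_t,f\rangle\, d\tilde N_t$) to close that gap. As written, the proposal is a correct plan with its hardest step named but not carried out.
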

 
Additionally to the results established above, we need to introduce a conditional expectation formula. Whenever F $\in \mathbb{M}$ is $\mathcal{F}$-measurable, $\langle F, f_i \rangle$ is also $\mathcal{F}$-measurable for $i \in \mathbb{N}$ and any f$ \in C_b(\mathbb{R})$. Denote $\mathcal{G} \subseteq \mathcal{F}$. Then $\forall i$, $\mathbb{E}[ \langle F, f_i \rangle \mid \mathcal{G}]$ is well-defined and we set:

\[ \mathbb{E}(F \mid \mathcal{G}) = \Phi^{-1}(\mathbb{E}[ \langle F, f_i \rangle \mid \mathcal{G}]_{i \in \mathbb{N}}). \]
So 
\[ \langle \mathbb{E}(F \mid \mathcal{G}) , f_i \rangle = \mathbb{E}( \langle F, f_i \rangle \mid \mathcal{G}) \mbox{ } \forall i.\]

Provided $ \|F\|_1 = \sup_{\| f \| \leq 1, f \in C_b(\mathbb{R})}\mathbb{E}\big[ |\langle F,f\rangle| \big] < \infty $, we have that $ | \mathbb{E} ( | \mathbb{E}( \langle F, f_i \rangle \mid \mathcal{G}) - \mathbb{E}( \langle F, f \rangle \mid \mathcal{G}) | ) | \leq \| f - f_i \| \| F \|_1  \rightarrow 0$ as $i \rightarrow \infty$. Therefore we can make the following assertion:

\begin{equation}
\mathbb{E}[ \langle F, f \rangle \mid \mathcal{G}) ] = \langle \mathbb{E}(F \mid \mathcal{G}) , f \rangle \mbox{ , } \forall f \in C_b(\mathbb{R}).
\label{wrightcondexp}
\end{equation}

At this stage, with the use of the results, definitions and theorems introduced so far in this section, we can prove an alternative Clark-Ocone formula for signed measures $F \in \mathbb{D}_{1,2}(\mathbb{M})$ that will then prove to be very useful in defining a representation for $ \mu^L_t$. \\

\begin{thm}[{\bf Clark-Ocone type formula for $F \in \mathbb{D}_{1,2}(\mathbb{M})$ } \cite{Wright}]
 For $F \in \mathbb{D}_{1,2}(\mathbb{M})$ , satisfying the boundedness condition set in Theorem \ref{prop3} and:
\[  \|F\|_1 = \sup_{\| f \| \leq 1, f \in C_b(\mathbb{R})}\mathbb{E}\big[ |\langle F,f\rangle| \big] < \infty, \]
F has a representation given by:
\[ F = \mathbb{E}(F) + \int^1_0 \mathbb{E}( D^{\mathbb{M}}_t F \mid \mathcal{F}_t) d \tilde{N}_t. \]
\label{wrighttheor1}
\end{thm}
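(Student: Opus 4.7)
The plan is to reduce the measure-valued statement to the scalar Poisson Clark-Ocone formula by testing against bounded continuous functions and exploiting the isomorphism $\Phi$. Concretely, I would fix an arbitrary $f\in C_b(\mathbb{R})$, form the scalar random variable $\langle F,f\rangle$, and show that both sides of the claimed identity give the same value after being paired with $f$. Since $C_b(\mathbb{R})$ separates signed measures (via the countable family $\{f_i\}$ underlying $\Phi$), matching the pairings against every $f_i$ determines $F$ as an element of $\mathbb{M}$ uniquely, so the measure-valued identity will follow.

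First I would apply the scalar Poisson Clark-Ocone formula to $\langle F,f\rangle$. By the first bullet of Theorem \ref{prop1}, $\langle F,f\rangle\in\mathbb{D}_{1,2}$ whenever $F\in\mathbb{D}_{1,2}(\mathbb{M})$ and $f\in C_b(\mathbb{R})$, so the standard Poisson Clark-Ocone formula (the jump-process version proved earlier in Section 2) gives
\[
\langle F,f\rangle \;=\; \mathbb{E}[\langle F,f\rangle] \;+\; \int_0^1 \mathbb{E}\big(D^{\mathbb{R}}_t\langle F,f\rangle\,\big|\,\mathcal{F}_t\big)\,d\tilde N_t.
\]
Next, by the second bullet of Theorem \ref{prop1}, $D^{\mathbb{R}}_t\langle F,f\rangle = \langle D^{\mathbb{M}}_t F,f\rangle$, and by the conditional-expectation identity \eqref{wrightcondexp} the integrand equals $\langle \mathbb{E}(D^{\mathbb{M}}_t F\mid\mathcal{F}_t),f\rangle$.

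Then I would push the pairing $\langle\cdot,f\rangle$ through the stochastic integral using Theorem \ref{prop3}: provided the measure-valued integrand $t\mapsto \mathbb{E}(D^{\mathbb{M}}_t F\mid\mathcal{F}_t)$ satisfies the required uniform $L^2$-boundedness, we obtain
\[
\int_0^1 \big\langle \mathbb{E}(D^{\mathbb{M}}_t F\mid\mathcal{F}_t),f\big\rangle\,d\tilde N_t \;=\; \Big\langle \int_0^1 \mathbb{E}(D^{\mathbb{M}}_t F\mid\mathcal{F}_t)\,d\tilde N_t,\; f\Big\rangle.
\]
Combined with $\mathbb{E}[\langle F,f\rangle]=\langle\mathbb{E}(F),f\rangle$ (another application of \eqref{wrightcondexp} with $\mathcal{G}$ trivial), this shows that the pairing of $F$ against $f$ coincides with the pairing of the right-hand side against $f$, for every $f\in C_b(\mathbb{R})$. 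Applying this to the separating family $\{f_i\}_{i\in\mathbb{N}}$ and inverting $\Phi$ yields the claimed measure-valued equality.

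The main obstacle is the verification of the hypothesis of Theorem \ref{prop3} for the integrand $t\mapsto \mathbb{E}(D^{\mathbb{M}}_t F\mid\mathcal{F}_t)$, i.e.\ establishing
\[
\sup_{\|f\|\leq 1,\ f\in C_b(\mathbb{R})}\mathbb{E}\Big[\int_0^\infty \big\langle \mathbb{E}(D^{\mathbb{M}}_t F\mid\mathcal{F}_t),f\big\rangle^{2}\,dt\Big] < \infty.
\]
This is where the assumption $F\in\mathbb{D}_{1,2}(\mathbb{M})$ together with the norm bound $\|F\|_1<\infty$ enters: conditional Jensen and Theorem \ref{prop1} reduce it to $\sup_{\|f\|\leq 1}\mathbb{E}[\|D^{\mathbb{R}}\langle F,f\rangle\|_{L^2(\mathbb{R}_+)}^2]$, which is finite by the defining seminorm of $\mathbb{D}_{1,2}(\mathbb{M})$. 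Apart from this uniform integrability check and the (routine) justification that $\langle\mathbb{E}(F),f\rangle = \mathbb{E}[\langle F,f\rangle]$, the argument is a direct transfer of the Poisson Clark-Ocone theorem through the isomorphism $\Phi$.
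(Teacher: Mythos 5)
Your argument follows essentially the same route as the paper's proof: pair $F$ with the test functions, apply the scalar Poisson Clark--Ocone formula to $\langle F,f_i\rangle$, commute the Malliavin derivative, the conditional expectation, and the stochastic integral with the pairing via Theorems \ref{prop1}, \ref{prop3} and identity \ref{wrightcondexp}, then invert $\Phi$. It is correct; if anything you are slightly more careful than the paper in flagging that the hypothesis of Theorem \ref{prop3} must be verified for the integrand $t\mapsto\mathbb{E}(D^{\mathbb{M}}_t F\mid\mathcal{F}_t)$ rather than for $F$ itself.
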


\begin{proof}
To prove Theorem \ref{wrighttheor1}, we start with the result of Proposition 2 of \cite{Mensi}: for $F \in Dom D^{\mathbb{R}}$, 
\[ F = \mathbb{E}(F) + \int^1_0 \mathbb{E}( D^{\mathbb{R}}_t F \mid \mathcal{F}_t) d \tilde{N}_t. \]
From Theorem \ref{prop1}, we know that F $\in \mathbb{D}_{1,2}$ implies $\langle F,f\rangle \in \mathbb{D}_{1,2}$. Therefore,

\[ \langle F, f_i \rangle = \mathbb{E}(\langle F, f_i \rangle) + \int^1_0 \mathbb{E}(D^{\mathbb{R}}_t  \langle F, f_i \rangle \mid \mathcal{F}_t) d \tilde{N}_t. \]

On the side, we note that:

\begin{eqnarray}
\Phi^{-1}(( \mathbb{E}(\langle F, f_i \rangle))_{i \in \mathbb{N}}) &=& \Phi^{-1}(( \mathbb{E}(\langle F, f_i \rangle \mid \mathcal{F}_0))_{i \in \mathbb{N}}) \nonumber \\
&=& \Phi^{-1}(( \langle \mathbb{E}( F \mid \mathcal{F}_0), f_i \rangle)_{i \in \mathbb{N}}) \mbox{ , from result \ref{wrightcondexp}} \nonumber \\
&=& \mathbb{E}( F \mid \mathcal{F}_0) \nonumber \\
&=& \mathbb{E}( F),
\label{eqnEFwright}
\end{eqnarray}
 and that:

\begin{eqnarray}
\mathbb{E}(D^{\mathbb{R}}_t  \langle F, f_i \rangle \mid \mathcal{F}_t) &=& \mathbb{E}(  \langle D^{\mathbb{M}}_t F, f_i \rangle \mid \mathcal{F}_t) \mbox{ ,from Theorem \ref{prop1}} \nonumber \\
&=& \langle \mathbb{E}(  D^{\mathbb{M}}_t F \mid \mathcal{F}_t) , f_i \rangle. 
\label{eqneEDtwright}
\end{eqnarray}

Using results \ref{eqnEFwright} and \ref{eqneEDtwright}, we thus have:

\begin{eqnarray}
\langle F, f_i \rangle &=& \langle \mathbb{E}( F), f_i \rangle + \int^1_0 \langle \mathbb{E}(  D^{\mathbb{M}}_t F \mid \mathcal{F}_t) , f_i \rangle  d \tilde{N}_t \nonumber \\
&=& \langle \mathbb{E}( F), f_i \rangle +  \langle  \int^1_0 \mathbb{E}(  D^{\mathbb{M}}_t F \mid \mathcal{F}_t)   d \tilde{N}_t , f_i \rangle \mbox{ ,from Theorem \ref{prop3}} \nonumber \\
&=& \langle \mathbb{E}( F) +  \int^1_0 \mathbb{E}(  D^{\mathbb{M}}_t F \mid \mathcal{F}_t)   d \tilde{N}_t , f_i \rangle, \nonumber
\end{eqnarray}

and

\begin{eqnarray}
F = \Phi^{-1}( (\langle F, f_i \rangle )_{i \in \mathbb{N}}) &=& \Phi^{-1}( (\langle \mathbb{E}( F) +  \int^1_0 \mathbb{E}(  D^{\mathbb{M}}_t F \mid \mathcal{F}_t)   d \tilde{N}_t , f_i \rangle)_{i \in \mathbb{N}}) \nonumber \\
&=& \mathbb{E}( F) +  \int^1_0 \mathbb{E}(  D^{\mathbb{M}}_t F \mid \mathcal{F}_t)   d \tilde{N}_t. \nonumber
\end{eqnarray}

\end{proof}

\subsubsection{Information drift $\mu^L_t$}

We begin by re-writing the conditional expectations of L as:
\[ P_t( \cdot , dx) = \mathbb{P}( L \in dx \mid \mathcal{F}_t). \]

Here, we aim to apply Theorem \ref{wrighttheor1} to $ P_t( \cdot , dx)$ to obtain sufficient conditions for the existence of $\mu^L_t$ and derive a formula for it. If $P_t( \cdot , dx)$ satisfies the conditions of Theorem \ref{wrighttheor1}, then:

\begin{align*}
P_t( \cdot , dx) &= \mathbb{E}(  P_t( \cdot , dx)) + \int^1_0 \mathbb{E}( D^{\mathbb{M}}_sP_t( \cdot , dx) \mid \mathcal{F}_s) d\tilde{N}_s \\
\mbox{As: } \mathbb{E}(  P_t( \cdot , dx)) = \mathbb{E}(  \mathbb{E}(1_{\{L \in dx\}} \mid \mathcal{F}_t) \mid \mathcal{F}_0) &= \mathbb{E}( 1_{\{L \in dx\}}  \mid \mathcal{F}_0) =  P_0( \cdot , dx), \\
\mbox{ We have: } P_t( \cdot , dx) &= P_0( \cdot , dx)+ \int^1_0 \mathbb{E}( D^{\mathbb{M}}_sP_t( \cdot , dx) \mid \mathcal{F}_s) d\tilde{N}_s.
\end{align*}

Define the following set:
\[ \mathcal{V} = \{ \upsilon \in L^2(\mathcal{B} \times \mathbb{R}_+) \mid \upsilon(t) = f(t,T_1,...,T_n), f \in C^{\infty}_b(\mathbb{R}^{n+1}) \}. \]

$\mathcal{V}$ also has the property that $\forall \upsilon \in \mathcal{V}$, $\upsilon(y) = f(y,x_1,...,x_n) = 0$ whenever $y > x_n$.

\begin{thm}[Remark 2 \cite{Privault}]
 $\mathcal{V}$ is dense in $L^2(\mathcal{B} \times \mathbb{R}_+)$.
\label{denseV}
\end{thm}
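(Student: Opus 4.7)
The plan is to approximate an arbitrary $v \in L^2(\mathcal{B} \times \mathbb{R}_+)$ by elements of $\mathcal{V}$ through four progressive reductions. First I would reduce to the case where $v$ is bounded (say $|v| \leq M$) and has $t$-support in a finite interval $[0,K]$, since the truncations $v\,\mathbf{1}_{\{|v| \leq M\}}\,\mathbf{1}_{[0,K]}(t)$ converge to $v$ in $L^2$ by dominated convergence as $M, K \to \infty$.

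Second, since the Poisson process is entirely determined by its sequence of jump times, $\mathcal{F} = \sigma(T_k : k \geq 1)$ up to null sets, and so the $L^2$-martingale convergence theorem yields
\[
\mathbb{E}\bigl[v(t,\cdot) \mid \sigma(T_1,\ldots,T_N)\bigr] \xrightarrow{L^2} v \qquad \text{as } N \to \infty.
\]
By the Doob-Dynkin lemma this conditional expectation is of the form $\phi_N(t, T_1, \ldots, T_N)$ for a bounded Borel function $\phi_N$ on $\mathbb{R}^{N+1}$ with $t$-support still contained in $[0,K]$; a standard mollification in all $N+1$ variables then replaces $\phi_N$ by a nearby $\psi_N \in C^\infty_b(\mathbb{R}^{N+1})$, uniformly bounded by $M$.

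Third---the decisive step---I would enforce the vanishing condition $\psi_N(y, x_1, \ldots, x_N) = 0$ for $y > x_N$ by multiplying $\psi_N$ with a smooth cutoff $\chi_\delta(y, x_N) = \eta\bigl((x_N - y)/\delta\bigr)$, where $\eta \in C^\infty(\mathbb{R})$ satisfies $\eta \equiv 0$ on $(-\infty, 0]$ and $\eta \equiv 1$ on $[1,\infty)$. Then $\psi_N \cdot \chi_\delta \in \mathcal{V}$, and the additional $L^2$-error introduced by the cutoff is controlled by
\[
\|\psi_N - \psi_N\chi_\delta\|_{L^2(\mathcal{B} \times \mathbb{R}_+)}^2 \leq M^2\,\mathbb{E}\Bigl[\int_0^K \mathbf{1}_{\{T_N \leq y + \delta\}}\, dy\Bigr] \leq M^2\, K\, \mathbb{P}(T_N \leq K + \delta),
\]
which tends to $0$ as $N \to \infty$ because $T_N \to \infty$ almost surely for a Poisson process.

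The main obstacle is precisely this last step: the definition of $\mathcal{V}$ carries the built-in support constraint $f(y, x_1,\ldots,x_n) = 0$ for $y > x_n$, which has no analogue in a generic element of $L^2(\mathcal{B} \times \mathbb{R}_+)$. The resolution exploits the specifically Poissonian fact that the $N$-th jump time diverges almost surely, so that the region $\{t > T_N\}$ on which the cutoff destroys information becomes negligible in $\mathbb{P} \otimes \lambda$ restricted to $t \leq K$ once $N$ is large. Absent this divergence the density claim would fail for a general point-process setting.
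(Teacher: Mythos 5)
The paper never proves this statement---it is quoted as Remark~2 of Privault \cite{Privault} and used as a black box---so there is no in-text argument to compare yours against; your proposal is a correct, self-contained proof of the cited fact. The structure (truncation, projection onto $\sigma(T_1,\dots,T_N)$ via Doob--Dynkin and $L^2$-martingale convergence, mollification, then a smooth cutoff near the diagonal $\{y=x_N\}$) is the standard route, and you have correctly isolated the one genuinely Poisson-specific ingredient: the support constraint $f(y,x_1,\dots,x_n)=0$ for $y>x_n$ built into $\mathcal{V}$ costs nothing in $L^2$ precisely because $T_N\to\infty$ almost surely, so the region sacrificed to the cutoff has vanishing measure (with respect to the product of $\mathbb{P}$ and Lebesgue measure restricted to $t\le K$) as $N\to\infty$; your bound $M^2K\,\mathbb{P}(T_N\le K+\delta)$ is the right quantitative form of this, and the observation that the claim would fail for a general point process without this divergence is exactly the right remark. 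Two small points to tighten: (i) the mollification must produce convergence in $L^2$ of the law of $(T_1,\dots,T_N)$ times Lebesgue measure on $[0,K]$, not Lebesgue measure on $\mathbb{R}^{N+1}$---harmless here since that law has a bounded density on the simplex $\{0<x_1<\dots<x_N\}$, but it should be said; and (ii) $N$ must be chosen large enough to serve the martingale-convergence step and the cutoff estimate simultaneously, which works because both errors improve monotonically in $N$, but the order of quantifiers deserves an explicit sentence.
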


Based on Theorem \ref{denseV}, Wright et al. \cite{Wright} prove the result below:

\begin{thm}[Corollary 1 \cite{Wright}]
For $u(\cdot,t) \in \mathbb{D}_{1,2}(\mathbb{M})$ such as  $u(\cdot,t)$ is $\mathcal{F}_t$-adapted $\forall t \in [0,1]$, $D^{\mathbb{M}}_s u_t = 0$ a.s. when $s > t$.
\label{collor1wright}
\end{thm}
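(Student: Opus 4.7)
The plan is to reduce the measure-valued statement to its scalar Poisson--Malliavin counterpart using Theorem \ref{prop1}, then invoke the standard principle that the Malliavin derivative of an adapted random variable vanishes on the future of the reference time. Concretely, for any test function $f \in C_b(\mathbb{R})$, Theorem \ref{prop1} gives $\langle D^{\mathbb{M}}_s u(\cdot,t), f\rangle = D^{\mathbb{R}}_s \langle u(\cdot,t), f\rangle$, and the $\mathcal{F}_t$-adaptedness of $u(\cdot,t)$ makes $G_f := \langle u(\cdot,t), f\rangle$ an $\mathcal{F}_t$-measurable element of $\mathrm{Dom}(D^{\mathbb{R}})$. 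Once we establish the scalar claim that $D^{\mathbb{R}}_s G = 0$ a.s. for $s > t$ whenever $G \in \mathrm{Dom}(D^{\mathbb{R}})$ is $\mathcal{F}_t$-measurable, the second part of Theorem \ref{prop1} identifies $D^{\mathbb{M}}_s u(\cdot,t)$ with $\Phi^{-1}\!\bigl((D^{\mathbb{R}}_s G_{f_i})_{i\in\mathbb{N}}\bigr)$, every coordinate of which vanishes, so the signed measure $D^{\mathbb{M}}_s u(\cdot,t)$ is zero.

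For the scalar claim I would argue by duality against predictable test processes supported past $t$. If $v \in L^2(\mathcal{B}\times\mathbb{R}_+)$ is $(\mathcal{F}_r)$-predictable and vanishes on $[0,t]$, the Skorohod integral collapses to an It\^o integral, $\delta(v) = \int_t^1 v_r \, d\tilde{N}_r$, which is a compensated Poisson martingale increment past $t$ and hence satisfies $\mathbb{E}[\delta(v)\mid\mathcal{F}_t] = 0$. The integration-by-parts duality between $D^{\mathbb{R}}$ and $\delta$ then yields
$$\mathbb{E}\!\left[\int_t^1 v_r \, D^{\mathbb{R}}_r G \, dr\right] \;=\; \mathbb{E}[G \cdot \delta(v)] \;=\; \mathbb{E}\!\bigl[G \cdot \mathbb{E}[\delta(v)\mid\mathcal{F}_t]\bigr] \;=\; 0,$$
using the $\mathcal{F}_t$-measurability of $G$ in the final equality. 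Extending this identity to a dense subset of $L^2(\mathcal{B}\times(t,1])$ forces the $L^2$-restriction of $(r,\omega)\mapsto D^{\mathbb{R}}_r G(\omega)$ to $(t,1]$ to vanish, which upgrades to the almost-sure statement.

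The main obstacle is making the density step fully rigorous. A smooth cylindrical generator $F = f(T_1,\dots,T_n) \in \mathcal{S}$ is typically not $\mathcal{F}_t$-measurable when $t < T_n$, because its smooth dependence on future jump times prevents the degeneracy required, so one cannot simply restrict the generators of $\mathrm{Dom}(D^{\mathbb{R}})$ to $\mathcal{F}_t$-adapted ones and read off the formula $D^{\mathbb{R}}_s F = -\sum_{k=1}^n 1_{[0,T_k]}(s)\partial_k f$. This means that the density upgrade of the test family $\{v\}$ to all of $L^2(\mathcal{B}\times(t,1])$, in which Theorem \ref{denseV} plays the pivotal role, is the technical heart of the argument. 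An alternative route avoiding this difficulty would be to approximate $G$ in the graph norm by functionals of the truncated Poisson path $(N_r)_{r\le t}$ and invoke closability of $D^{\mathbb{R}}$; either way a careful approximation step is unavoidable.
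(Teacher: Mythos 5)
First, a point of reference: the paper does not actually prove Theorem \ref{collor1wright} --- it quotes it from Wright et al.\ with the remark that it rests on the density statement of Theorem \ref{denseV} --- so there is no in-paper argument to compare yours against line by line. Your reduction of the measure-valued claim to the scalar one (pairing with $f_i\in C_b(\mathbb{R})$, using $\langle D^{\mathbb{M}}_s u(\cdot,t), f\rangle = D^{\mathbb{R}}_s\langle u(\cdot,t),f\rangle$ from Theorem \ref{prop1}, and recovering the measure via $\Phi^{-1}$) is correct and is exactly the mechanism the paper sets up for such transfers, so that part stands.

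The gap is in the scalar duality step. Testing only against \emph{predictable} $v$ vanishing on $[0,t]$, the identity $\mathbb{E}\bigl[\int_t^1 v_r\, D^{\mathbb{R}}_r G\,dr\bigr]=0$ tells you only that the adapted (predictable) projection of $r\mapsto D^{\mathbb{R}}_r G$ vanishes on $(t,1]$, i.e.\ essentially that $\mathbb{E}[D^{\mathbb{R}}_r G\mid\mathcal{F}_r]=0$ for a.e.\ $r>t$; it does not force $D^{\mathbb{R}}_r G=0$ a.s., because adapted processes supported on $(t,1]$ are not dense in $L^2(\mathcal{B}\times(t,1])$ --- their closed span is precisely the adapted subspace. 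To get the full conclusion you must test against a class dense in all of $L^2(\mathcal{B}\times(t,1])$; this is where $\mathcal{V}$ and Theorem \ref{denseV} enter, and its elements $f(\cdot,T_1,\dots,T_n)$ are genuinely anticipating. But for such non-adapted $v$ your justification of $\mathbb{E}[G\,\delta(v)]=0$ --- namely that $\delta(v)$ is a compensated martingale increment past $t$ and so has vanishing $\mathcal{F}_t$-conditional expectation --- no longer applies, since $\delta(v)$ is then a true anticipating integral. So the density issue you flag is real, but it is not merely a technical upgrade of the test family: once the test class is enlarged, the vanishing of $\mathbb{E}[G\,\delta(v)]$ has to be argued by entirely different means, and your argument supplies none. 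The alternative you sketch in your final sentence --- approximate the $\mathcal{F}_t$-measurable $G$ in the graph norm by smooth functionals of the stopped path, observe that for such functionals $\partial_k f$ can be nonzero only where $T_k\le t$ so that each term $1_{[0,T_k]}(s)\,\partial_k f$ vanishes for $s>t$, and conclude by closability of $D^{\mathbb{R}}$ --- is the route that actually closes this gap and is the one you should develop.
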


As $P_t(\cdot,dx)$ is $\mathcal{F}_t$-adapted, Theorem \ref{collor1wright} applies and hence:

\[ D^{\mathbb{M}}_s P_t(\cdot,dx) = 0 \mbox{, as } s \geq t. \]
The above result justifies the following assertion, which is an extension to Proposition 3 in \cite{Mensi}:

\begin{thm}[{\bf Condition of existence and formula for $\mu^L_t$} \cite{Wright} ]
Given L and its conditional law  $P_t(\cdot,dx)$, if  $P_t(\cdot,dx)$ satisfies the conditions of Theorem \ref{wrighttheor1}, $P_t(\cdot,dx)$ is represented as:
\[ P_t(\cdot,dx) =  P_0( \cdot , dx)+ \int^1_0 \mathbb{E}( D^{\mathbb{M}}_sP_t( \cdot , dx) \mid \mathcal{F}_s) d\tilde{N}_s.\]

If $\exists g : \mathbb{R}_+ \times \mathcal{B} \times \mathbb{R}_+ \rightarrow \mathbb{R}$ measurable and a Stopping time S such as
\[ 1_{\{s \leq S\}}\mathbb{E}(D^{\mathbb{M}}_sP_t( \cdot , dx) \mid \mathcal{F}_s) = 1_{\{s \leq S\}}g_s(\cdot,x)P_s(\cdot,dx), \]
then
\[ \tilde{N}_t - \int^t_0 g_s(\cdot,L)ds \]
is a $\mathcal{G}_t$-martingale. In other words, $\mu^L_s =  g_s(\cdot,L)$.
\label{prop6wright}
\end{thm}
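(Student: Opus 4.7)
The plan is to apply the measure-valued Clark-Ocone formula of Theorem \ref{wrighttheor1} to the conditional law $P_t(\cdot, dx)$ and then identify the drift through a test-function argument paired with the It\^o isometry for the compensated Poisson martingale. For the first display, applying Theorem \ref{wrighttheor1} to $F = P_t(\cdot, dx)$ yields $P_t(\cdot, dx) = \mathbb{E}(P_t(\cdot, dx)) + \int_0^1 \mathbb{E}(D^{\mathbb{M}}_s P_t(\cdot, dx) \mid \mathcal{F}_s) d\tilde{N}_s$, and the tower property gives $\mathbb{E}(P_t(\cdot, dx)) = \mathbb{E}(\mathbb{E}(1_{\{L \in dx\}} \mid \mathcal{F}_t) \mid \mathcal{F}_0) = P_0(\cdot, dx)$; note that Theorem \ref{collor1wright} additionally tells us $D^{\mathbb{M}}_s P_t = 0$ for $s > t$.

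The core step is to show $\tilde{N}_t - \int_0^t g_s(\cdot, L) ds$ is a $\mathcal{G}_t$-martingale (strictly speaking, up to the stopping time $S$). Testing the measure-valued representation of $P_t(\cdot, dx)$ against $f \in C_b(\mathbb{R})$, using Theorem \ref{prop1} to move $f$ inside the Malliavin derivative and Theorem \ref{prop3} to move it inside the stochastic integral, yields the scalar identity $\langle P_t, f\rangle = \langle P_0, f\rangle + \int_0^t \int f(x) g_s(\cdot, x) P_s(\cdot, dx) d\tilde{N}_s$ valid on $\{s \leq S\}$. Specialising to $t = 1$ gives an explicit It\^o representation $f(L) = \langle P_1, f\rangle = \langle P_0, f\rangle + \int_0^1 h^f_s d\tilde{N}_s$ with integrand $h^f_s := \int f(x) g_s(\cdot, x) P_s(\cdot, dx)$.

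To verify the $\mathcal{G}_t$-martingale property, it suffices by a monotone class argument on the $\pi$-system generating $\mathcal{G}_s = \mathcal{F}_s \vee \sigma(L)$ to show that for $s < t$, every $A \in \mathcal{F}_s$ and every $f \in C_b(\mathbb{R})$ one has $\mathbb{E}[(\tilde{N}_t - \tilde{N}_s) 1_A f(L)] = \mathbb{E}[1_A f(L) \int_s^t g_u(\cdot, L) du]$. I would compute the left-hand side by writing $1_A(\tilde{N}_t - \tilde{N}_s) = \int_0^1 1_A 1_{(s,t]}(u) d\tilde{N}_u$ (a predictable integrand since $A \in \mathcal{F}_s \subseteq \mathcal{F}_u$), substituting the It\^o representation of $f(L)$, and applying the It\^o isometry for the compensated Poisson integral to reduce the cross term to $\mathbb{E}[1_A \int_s^t h^f_u du]$. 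Recognising $h^f_u = \mathbb{E}[f(L) g_u(\cdot, L) \mid \mathcal{F}_u]$ and invoking the tower property with $1_A \in \mathcal{F}_u$ then delivers the right-hand side, so $\mu^L_u = g_u(\cdot, L)$.

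The main obstacle is twofold. First, since the identification of the integrand only holds on $\{s \leq S\}$, one properly obtains the martingale property after stopping at $S$, and the conclusion $\mu^L_s = g_s(\cdot, L)$ should be interpreted as the $\mathcal{G}$-decomposition on the stochastic interval $[0, S]$; a localisation argument with $S_n \uparrow \infty$ would extend this globally when available. Second, the It\^o isometry step requires square-integrability of $h^f_u$ and the boundedness hypothesis of Theorem \ref{prop3} to be verified for $P_t(\cdot, dx)$ — these are the substantive technical conditions implicit beyond the stated hypotheses, and care must be taken when passing the test functions $f_i$ through the conditional expectation in equation \ref{wrightcondexp} to recover the measure-valued statement from its scalar projections.
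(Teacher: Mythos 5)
Your proposal is sound, and it actually goes further than the paper does. The paper's own treatment of Theorem \ref{prop6wright} consists only of the first half of your argument: it derives the representation $P_t(\cdot,dx)=P_0(\cdot,dx)+\int_0^1\mathbb{E}(D^{\mathbb{M}}_sP_t(\cdot,dx)\mid\mathcal{F}_s)\,d\tilde{N}_s$ by applying Theorem \ref{wrighttheor1} and the tower property exactly as you do, notes via Theorem \ref{collor1wright} that $D^{\mathbb{M}}_sP_t=0$ for $s\geq t$, and then simply asserts the martingale conclusion as "an extension to Proposition 3 in \cite{Mensi}," deferring the actual verification to Wright et al. Your second and third paragraphs supply the missing verification, and the route you choose --- projecting the measure-valued identity onto test functions $f\in C_b(\mathbb{R})$ via Theorems \ref{prop1} and \ref{prop3}, specialising to $t=1$ so that $\langle P_1,f\rangle=f(L)$, and then checking $\mathbb{E}[(\tilde{N}_t-\tilde{N}_s)1_Af(L)]=\mathbb{E}[1_Af(L)\int_s^tg_u(\cdot,L)\,du]$ by polarising the It\^o isometry against the predictable integrand $1_A1_{(s,t]}$ and recognising $h^f_u=\mathbb{E}[f(L)g_u(\cdot,L)\mid\mathcal{F}_u]$ --- is the standard Jacod-style argument and is correct. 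Your two caveats are also well placed: the identification of the integrand only on $\{s\leq S\}$ means the clean conclusion is a stopped $\mathcal{G}$-martingale on $[0,S]$ (the paper's statement glosses over this), and the isometry step does require the square-integrability and boundedness hypotheses of Theorem \ref{prop3} for $P_t(\cdot,dx)$, which the paper likewise leaves implicit. The one point worth making explicit if you write this up is the measurability needed for the disintegration step $\int f(x)g_u(\cdot,x)P_u(\cdot,dx)=\mathbb{E}[f(L)g_u(\cdot,L)\mid\mathcal{F}_u]$, namely that $g_u(\omega,x)$ is jointly measurable and $\mathcal{F}_u$-measurable in $\omega$, together with integrability of $g_u(\cdot,L)$ so that the compensated process is itself integrable.
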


Applying Theorem \ref{prop6wright} to our initial problem, we get:\\
$\forall m \in \mathcal{M}^2 \cap \mathbb{D}_{1,1}$, we have the following integral representation:

\[ m_t = \int^t_0 \mathbb{E}(D_s m_{\infty} \mid \mathcal{F}_s) (d\tilde{W}_t + g_s(\cdot,L) ds) \mbox{ , } \forall t \in [0,1] \]
where
\[ \exists S \mbox{ - stopping time - s.a. } 1_{\{s \leq S\}}\mathbb{E}(D^{\mathbb{M}}_sP_t( \cdot , dx) \mid \mathcal{F}_s) = 1_{\{s \leq S\}}g_s(\cdot,x)P_s(\cdot,dx). \]

\subsection{Further Consideration}

We have reviewed a couple of occasions where the MRT and Clark-Ocone formula need to and can be generalized. However more can be done in order to make the MRT and Clark-Ocone formula applicable to larger sets of martingales and more situations. \\
\\ It is agreed that neither the $\mathbb{D}_{1,2}$ nor the $\mathbb{D}_{1,1}$ spaces are general enough for financial applications. Ideally, we want to be able to apply the concept of the Clark-Ocone representation formula to every $\mathcal{F}_T$-measurable F of $L^2(\Omega, \mathcal{F}, \mathbb{P})$ and its associated martingale space $\mathcal{M}^2$, for stochastic bases $(\Omega, \mathcal{F}, \mathbb{P})$ generated by any class of processes. One way of doing so is developed in Aase et al. \cite{Aase}, where a white-noise approach to Malliavin calculus is used in order to prove the following:

\[ \forall F \in \mathcal{G}^* \supset L^2(\mu) \mbox{ , } F = \mathbb{E}(F) + \int^T_0 \mathbb{E}(D_t F \mid \mathcal{F}_t) \diamond W_t dt, \]

\noindent where $D_t F = D_t F(\omega) = \frac{dF}{d\omega}$ is the generalized Malliavin derivative, $\diamond$ represents the Wick product and $W_t$ can be a scalar or multi-dimentional Gaussian, Poisson or combined Gaussian-Poisson white noise. The above formula holds on $\mathcal{G}^*$, which is a space of stochastic distributions. Additionally, $\mu$ represents the white-noise probability measure, hence $\mathcal{G}^* \supset L^2(\mu) $.  Another paper by Ustunel \cite{Ustunel} also offers a similar generalization of the Clark-Ocone formula for all F in $\mathbb{D}_{-\infty}$, which is the space of Meyer-Watanabe distributions. However, $\mathbb{D}_{-\infty} \subset \mathcal{G}^*$ and $\mathbb{D}_{-\infty} \neq \mathcal{G}^*$  \cite{Aase}\\
\\
Another way to approach integral martingale representation is through non-anticipative functionals as done in Cont \cite{Cont}. Instead of using regular Malliavin calculus, we employ the concepts of horizontal and vertical derivative $ \mathcal{D}_tF$ and $\bigtriangledown_x F$ for a  non-anticipative functional F = $(F_t)_{[0,T)}$. On a probability space $(\Omega, \mathcal{F}, (\mathcal{F}_t), \mathbb{P})$ evolves a continuous $\mathbb{R}^d$-valued semi-martingale X that generates the sigma-fields $\mathcal{F}^X_t$. Then for every $\mathcal{F}^X_t$-adapted Y it can be shown that
\[ Y_t = F_t(X_t,A_t) \]
where  $\langle X\rangle_t =  \int^t_0 A_udu$ and F is a functional representing the dependence of Y on X and its quadratic variation. Based on this setting, it is possible to show an alternative form of the martingale representation theorem:
\[ \forall Y \in L^2(X)\cap\mathcal{M}^2 \mbox{ s.a. Y is } \mathcal{F}^X_t \mbox{-adapted, } Y_T = Y_0 +\int^T_0 \bigtriangledown_xYdX. \]
This result is of particular interest as it is computationally less intensive than the regular Malliavin derivative. $\bigtriangledown_xY$ can be calculated pathwise, and hence lends itself better to numerical computations. Note that when X=W is a Brownian motion, the vertical derivative $\bigtriangledown_W$ can be related to the Malliavin derivative.\\
\\
This paper has covered cases evolving on the standard space, a.k.a $\mathbb{R}$. However aspects of nonstandard analysis and applications of nonstandard stochastics to finance are of increased interest, as the hyperfinite versions of the regular option pricing models are better at outlining the connections between discrete and continuous trading models \cite{Cutland}. 
To apply pricing and trading models to $^*\mathbb{R} \setminus\mathbb{R}$, we would be looking at creating a version of the MRT and of the Clark-Ocone formula beyond the standard space. There is currently no rigorous  nonstandard proof of the MRT in the form we have reviewed in this paper, since the filtrations of the nonstandard setting are much too rich and would result in integrals that are not well-defined \cite{Lindstrom}. One possible area of expansion in this direction would be to follow the idea of Lindstr$\o$m \cite{Lindstrom2}. There, the Brownian motion in equation \ref{itorepth} is replaced by an Anderson's random walk, which is a binomial random walk with an infinitesimal increment $\delta$ such as $\delta = T/N$, $N \in$  $^*\mathbb{R} \setminus\mathbb{R}$ on the interval [0,T]  \cite{Anderson}. It is sometimes the case that hyperfinite stochastic processes have similar properties to standard ones, but this does not always hold, hence the need for further research in this area.

\end{document}